\renewcommand{\le}{\leqslant}
\renewcommand{\ge}{\geqslant}
\newcommand{\id}{\mathbbmss 1}
\newcommand {\matl}{\left[ \begin{matrix}}
\newcommand {\matr}{\end{matrix}\right]}
\newcommand {\Exp}{ \mathbb E }
\newcommand {\sg}{\operatorname{sgn}}
\renewcommand {\Pr}{ \mathbb P }
\newcommand {\Var}{\mathbf {Var}}
\newcommand{\e}{\mathrm e}
\renewcommand{\d}{\mathrm{d}}
\newcommand{\cP}{\mathcal{P}}
\newcommand{\Otilde}{\Tilde{\mathcal{O}}}
\newcommand{\Otheta}{\Tilde{\Theta}}
\newcommand{\htheta}{\widehat{\theta}}
\newcommand{\hmu}{\widehat{\mu}}
\newcommand{\CI}{\operatorname{CI}}
\newcommand{\CIDS}{\operatorname{CI}^{\mathsf{DS}}}
\newcommand{\CIDShet}{\operatorname{CI}^{\mathsf{DS'}}}
\newcommand{\CIch}{\CI^{\textsf{Cheb}}}
\newcommand{\CIcher}{\CI^{\textsf{Cher}}}
\newcommand{\aCISN}{\operatorname{aCI}^{\mathsf{SN}}}
\newcommand{\aCISNp}{\operatorname{aCI}^{\mathsf{SN}+}}
\newcommand{\aCISNn}{\operatorname{aCI}^{\mathsf{SN}-}}
\newcommand{\aCISNphet}{\operatorname{aCI}^{\mathsf{SN}+'}}
\newcommand{\aCISNnhet}{\operatorname{aCI}^{\mathsf{SN}-'}}
\newcommand{\CISN}{\operatorname{CI}^{\mathsf{SN}}}
\newcommand{\CISNhet}{\operatorname{CI}^{\mathsf{SN}'}}
\newcommand{\LSN}{\operatorname{L}^{\mathsf{SN}}}
\newcommand{\MSN}{\operatorname{M}^{\mathsf{SN}}}
\newcommand{\USN}{\operatorname{U}^{\mathsf{SN}}}
\newcommand{\CICT}{\operatorname{CI}^{\mathsf{C}}}
\newcommand{\CICTplus}{\operatorname{CI}^{\mathsf{C+}}}
\newcommand{\CIstitch}{\operatorname{CI}^{\mathsf{Cstch}}}
\newcommand{\CR}{\operatorname{CR}}
\newcommand{\Mcat}{M^{\mathsf{C}}}
\newcommand{\Ncat}{N^{\mathsf{C}}}
\newcommand{\Msn}{M^{\mathsf{SN}}}
\newcommand{\lw}{\operatorname{lw}}
\newcommand{\up}{\operatorname{up}}
\DeclareMathOperator*{\Prw}{\Pr}
\title{Catoni-style confidence sequences\\for heavy-tailed mean estimation\thanks{Published in \emph{Stochastic Processes and their Applications}, \url{https://doi.org/10.1016/j.spa.2023.05.007}.}}
\author[1]{Hongjian Wang}
\author[2]{Aaditya Ramdas}
\affil[1, 2]{Machine Learning Department, Carnegie Mellon University} 
\affil[2]{Department of Statistics and Data Science, Carnegie Mellon University}
\affil[ ]{\texttt{ \{hjnwang,aramdas\}@cmu.edu  }}
\date{June 13, 2023} 
\newtheorem{theorem}{Theorem}
\newtheorem{definition}{Definition}
\newtheorem{lemma}[theorem]{Lemma}
\newtheorem{proposition}[theorem]{Proposition}
\newtheorem{corollary}{Corollary}[theorem]
\newtheorem{remark}{Remark}
\newtheorem{assumption}{Assumption}
\DeclareMathOperator*{\polylog}{polylog}
\begin{document}

\maketitle

\begin{abstract}
A confidence sequence (CS) is a sequence of confidence intervals that is valid at arbitrary data-dependent stopping times. These are useful in applications like A/B testing, multi-armed bandits, off-policy evaluation, election auditing, etc. We present three approaches to constructing a confidence sequence for the population mean, under the minimal assumption that only an upper bound $\sigma^2$ on the variance is known. While previous works rely on light-tail assumptions like boundedness or subGaussianity (under which all moments of a distribution exist), the confidence sequences in our work are able to handle data from a wide range of heavy-tailed distributions. The best among our three methods --- the Catoni-style confidence sequence --- performs remarkably well in practice, essentially matching the state-of-the-art methods for $\sigma^2$-subGaussian data, and provably attains the $\sqrt{\log \log t/t}$ lower bound due to the law of the iterated logarithm.
Our findings have important implications for sequential experimentation with unbounded observations, since the $\sigma^2$-bounded-variance assumption is more realistic and easier to verify than $\sigma^2$-subGaussianity (which implies the former). We also extend our methods to data with infinite variance, but having $p$\textsuperscript{th} central moment ($1<p<2$).
\end{abstract}

\section{Introduction}\label{sec:intro}

We consider the classical problem of sequential nonparametric mean estimation. As a motivating example, let $P$ be a distribution on $\mathbb R$ from which a stream of i.i.d.\ sample $X_1, X_2, \dots$ is drawn. The mean of the distribution,
\begin{equation}
    \mu = \int_{\mathbb R} x \, \d P = \Exp[X_i],
\end{equation}
is unknown and is our estimand. The traditional and most commonly studied approaches to this problem include, among others, the construction of \emph{confidence intervals} (CI). That is, we construct a $\sigma(X_1, \dots, X_t)$-measurable random interval $\CI_t$ for each $t \in \mathbb N^+$ such that
\begin{equation}\label{eqn:ci}
   \forall t \in \mathbb N^+, \quad \Pr[\mu \in \CI_t] \ge 1 - \alpha.
\end{equation}
It is, however, also well known that confidence intervals suffer from numerous deficiencies. For example, random stopping rules frequently arise in sequential testing problems, and it is well known that confidence intervals \eqref{eqn:ci} typically \emph{fail} to satisfy the guarantee
\begin{equation}\label{eqn:stopped-coverage}
    \forall \, \text{stopping time } \tau, \quad \Pr[\mu \in \CI_\tau] \ge 1 - \alpha.
\end{equation}
In other words, traditional confidence intervals are invalid and may undercover at stopping times.
To remedy this, a switch of order between the universal quantification over $t$ and the probability bound in the definition of CI \eqref{eqn:ci} was introduced \citep{darling1967confidence}:
\begin{equation}\label{eqn:cs}
    \Pr[\forall t \in \mathbb N^+, \; \mu \in \CI_t] \ge 1 - \alpha.
\end{equation}
The random intervals\footnote{We use ``confidence interval" to also refer to confidence sets, allowing $\CI_t$ sometimes to be the finite union of intervals.} $\{ \CI_t \}$ that satisfy the property above are called a $(1-\alpha)$-\emph{confidence sequence} (CS). The definition of CS \eqref{eqn:cs} and the property of stopped coverage \eqref{eqn:stopped-coverage} are actually equivalent, due to \citet[Lemma 3]{howard2021time}.

%%%
%For the unfamiliar reader who is not aware of the perils of continuous monitoring or peeking at confidence intervals as they arrive, refer to Figure 1(b) in \citet{howard2021time}, 
It is known that CSs do not suffer from the perils of applying CIs in sequential settings (e.g.\ continuous monitoring or peeking at CIs as they arrive). For example, \citet[Figure 1(b)]{howard2021time} shows in a similar context that the cumulative type-I error grows without bound if a traditional confidence interval is continuously monitored, but a confidence sequence has the same error bounded at $\alpha$; also see \citet{johari2017peeking} for a similar phenomenon stated in terms of p-values.
%%%

Prior studies on constructing confidence sequences for mean $\mu$ hinge on certain stringent assumptions on $P$.  \citet{darling1967confidence} considered exclusively the case where $P$ was a normal distribution. \citet{jennison1989interim} made the same parametric assumption. Later authors including \cite{lai1976confidence,csenki1979note}, and recently \citet{johari2015always} (who notably defined ``always-valid p-values'') allowed $P$ to be a distribution belonging to a fixed exponential family. More recently, \citet{howard2020time,howard2021time} performed a systematic study of nonparametric confidence sequences, whose assumptions on $P$ ranged among subGaussian, sub-Bernoulli, sub-gamma, sub-Poisson, and sub-exponential, which in most cases considered involve a bounded moment generating function, and in particular, that all moments exist. The latest advance in CSs was the paper by \citet{waudby2020estimating} that studied the case of bounded $P$ largely because of its ``betting" set-up, which of course implies all moments exist. Finally, the prior result closest to our setting is
a recent study on heavy-tailed bandits by \citet[Proposition 5]{agrawal2021regret}, whose implicit CS is based on empirical likelihood techniques, but it demands a nontrivial and costly optimization computation and its code is currently not publicly available.

In this paper, we remove all the parametric and tail lightness assumptions of the existing literature mentioned above, and make instead only one simple assumption (\cref{ass:var} in \cref{sec:setup}): the variance of the distribution exists and is upper bounded by a constant $\sigma^2$ known \emph{a priori},
\begin{equation}\label{eqn:var-bound}
    \int_{\mathbb R}(x - \mu)^2 \, \d P = \Var[X_i] \le \sigma^2.
\end{equation}
%\behold{This assumption is known to be \emph{minimal} in the sense that no inference on $\mu$ would be possible in its absence, which we shall discuss in \cref{sec:minimality}.}

Further, we shall show that, even under this simple assumption that allows for a copious family of heavy-tailed distributions (whose third moment may be infinite), the $(1-\alpha)$-CS $\{ \CI_t \}$ which we shall present achieves remarkable width control. We characterize the tightness of a confidence sequence from two perspectives. First, the rate of \emph{shrinkage}, that is how quickly $|\CI_t|$, the width of the interval $\CI_t$, decreases as $t \to \infty$; Second, the rate of \emph{growth}, that is how quickly $|\CI_t|$ increases as $\alpha \to 0$. It is useful to review here how the previous CIs and CSs in the literature behave in these regards. Chebyshev's inequality, which yields~\eqref{eqn:ci} when requiring \eqref{eqn:var-bound}, states that
\begin{equation}\label{eqn:chebci}
    \CIch_t = \left[\widehat{\mu}_t \pm \frac{\sigma}{\sqrt{\alpha t}}\right], \quad\text{where } \widehat{\mu}_t = \frac{\sum_{i=1}^t X_i}{t}
\end{equation}
forms a $(1-\alpha)$-CI at every $t$, which is of shrinkage rate $\mathcal{O}(t^{-1/2})$ and growth rate $\mathcal{O}(\alpha^{-1/2})$. Strengthening the assumption from \eqref{eqn:var-bound} to subGaussianity with variance factor $\sigma^2$  \citep[Section 2.3]{boucheron2013concentration}, the Chernoff bound ensures that $(1-\alpha)$-CIs can be constructed by
\begin{equation}\label{eqn:chernoff}
    \CIcher_t = \left[\widehat{\mu}_t \pm \sigma \sqrt{\frac{2\log(2/\alpha)}{t}} \right],
\end{equation}
i.e.\ the stronger subGaussianity assumption leads to a sharper growth rate of $\mathcal{O}\left(\sqrt{\log(1/\alpha)}\right)$. It is \citet[Proposition 2.4]{catoni2012challenging} who shows the striking fact that by discarding the empirical mean $\widehat{\mu}_t$ and using an influence function instead to stabilize the outliers associated with heavy-tailed distributions, a $\mathcal{O}(\log(1/\alpha))$ growth rate can be achieved even when only the variance is bounded  \eqref{eqn:var-bound}; similar results can be found in the recent survey by \citet{lugosi2019mean}.

In the realm of confidence sequences, we see that recent results by \citet{howard2021time, waudby2020estimating}, while often requiring stringent Chernoff-type assumption on the distribution, all have $\mathcal{O}\left(\sqrt{\log t/t}\right)$ shrinkage rates and $\mathcal{O}\left(\sqrt{\log(1/\alpha)}\right)$ growth rates. For example, Robbins' famous two-sided normal mixture confidence sequence for subGaussian $\cP$ with variance factor $\sigma^2$ (see e.g., \citet[Equation (3.7)]{howard2021time}) is of the form
\begin{equation}\label{eqn:nmix-cs}
    \CI_t^{\mathsf{NMix}} = \left[\widehat{\mu}_t \pm \frac{\sigma\sqrt{(t+1) \log \frac{4(t+1)}{\alpha^2}}}{t} \right].
\end{equation}

The best among the three confidence sequences in this paper (\cref{thm:cs-cat}) draws direct inspiration from \citet{catoni2012challenging}, and achieves a provable shrinkage rate of $\Otilde(t^{-1/2})$, where the $\Otilde$ hides $\polylog t$ factors, and growth rate $\mathcal{O}(\log(1/\alpha))$. A fine-tuning of it leads to the exact shrinkage rate $\mathcal{O}(\sqrt{\log \log t/t})$, matching the lower bound of the law of the iterated logarithm under precisely the same assumption \eqref{eqn:var-bound}. The significance of this result, in conjunction with \citet{howard2021time}, is that moving from one-time valid interval estimation to anytime valid interval\emph{s} estimation (confidence sequences), no significant excess width is necessary to be incurred; nor does weakening the distribution assumption from sub-exponential to finite variance results in any cleavage of interval tightness, in both CI and CS alike. Our experiments demonstrate that published subGaussian CSs are extremely similar to our finite-variance CSs, but the former assumption is harder to check and less likely to hold (all moments may not exist for unbounded data in practice). We summarize and compare the mentioned works in terms of tightness in \cref{table:comparison-tightness}.

\begin{table}[h]
\vskip 0.15in
\begin{center}
\begin{scriptsize}
%\begin{sc}
\begin{tabular}{p{3cm}|p{3.45cm}|p{5cm}}
\toprule
 & CI & CS  \\
\hline
Light-tailed \newline (MGF exists)& Chernoff bound (EM): \newline $\mathcal{O}(t^{-1/2})$, $\mathcal{O}(\sqrt{\log(1/\alpha)})$ & \citet{howard2021time} (EM): \newline $\mathcal{O}(\sqrt{\log t / t})$ or $\mathcal{O}(\sqrt{\log \log  t / t})$,
\newline
$\mathcal{O}(\sqrt{\log(1/\alpha)})$ 
\\
\hline
\vspace{1em} Heavy-tailed \newline (only finite variance)& Chebyshev inequality (EM): \newline $\mathcal{O}(t^{-1/2})$, $\mathcal{O}(\alpha^{-1/2})$. \newline \newline \citet{catoni2012challenging}: \newline  $\mathcal{O}(t^{-1/2})$, $\mathcal{O}(\log(1/\alpha))$ & This paper (\cref{cor:catoni-tight}): \newline $\mathcal{O}(\log \log t \sqrt{\log t / t}))$, $\mathcal{O}(\log(1/\alpha))$  w.h.p. 
\newline \newline This paper (\cref{cor:loglogt}): \newline $\mathcal{O}(\sqrt{\log\log t/t})$, $\mathcal{O}(\sqrt{\log(1/\alpha)})$ w.h.p.
\\
\hline 
\vspace{1em} Heavier-tailed \newline (only finite $p$\textsuperscript{th} moment) & Markov inequality (EM): \newline $\mathcal{O}(t^{-(p-1)/{p}})$, $\mathcal{O}(\alpha^{-1/p})$ \newline \newline \citet{chen2021generalized}: \newline $\mathcal{O}(t^{-(p-1)/p})$, $\mathcal{O}(\log(1/\alpha))$ & \vspace{1em} This paper (\cref{cor:infvar-tight}): \newline $\mathcal{O}(t^{-(p-1)/p}\log t)$, $\mathcal{O}(\log(1/\alpha))$  w.h.p.
\\
\bottomrule
\end{tabular}
%\end{sc}
\end{scriptsize}
\caption{Comparison of asymptotic tightnesses among prominent confidence intervals and confidence sequences.
Here ``(EM)'' indicates that the corresponding CI or CS is constructed around the empirical mean; ``w.h.p.'' stands for ``with high probability'' (used when the interval widths are not deterministic). 
%Note that the $\sqrt{\log \log t/ t}$ lower bound are precisely matched in the first two rows of the ``CS'' column (\citet[Equation (2)]{howard2021time}; \cref{cor:loglogt}, this paper). 
The ``Markov inequality" bounds in the last cell of the ``CI" column can be derived from e.g.\ the martingale $L^p$ bound Lemma 7 in \citet{wang2021convergence}, Appendix A.
%In the ``CS'' column, both of the ``$\tilde{\mathcal{O}}(t^{-1/2})$'' rates can be tuned to precisely match the $\sqrt{\log \log 2t/ t}$ lower bound (\citet[Equation (2)]{howard2021time}; \cref{cor:loglogt}, this paper).
}
\label{table:comparison-tightness}
\end{center}
\vskip -0.1in
\end{table}

%%%%%%%%%%%%%%%%%%%%%%%%%%%%%%%%%%

\section{Problem set-up and notations}\label{sec:setup}

Let $\{ X_t\}_{ t \in \mathbb N^+ }$ be a real-valued stochastic process adapted to the filtration $\{ \mathcal{F}_t \}_{ t \in \mathbb N_0}$ where $\mathcal{F}_0$ is the trivial $\sigma$-algebra. We make the following assumptions.
\begin{assumption}\label{ass:mean}
    The process has a constant, unknown conditional expected value:
    \begin{equation}
        \forall t\in\mathbb N^+, \quad \Exp[X_t \mid \mathcal{F}_{t-1}] = \mu.
    \end{equation}
\end{assumption}
\begin{assumption}\label{ass:var}
    The process is conditionally square-integrable with a uniform upper bound, known \emph{a priori}, on the conditional variance:
    \begin{equation}
        \forall t\in\mathbb N^+, \quad \Exp[(X_t -\mu)^2  \mid \mathcal{F}_{t-1}] \le \sigma^2.
    \end{equation}
\end{assumption}

% We denote by $\cP^{\mu, \sigma^2}$ the set of all distributions on $\mathbb R^{\infty}$ of $X_1, X_2, \dots$ that satisfy \cref{ass:mean} and \cref{ass:var}. The task of this paper is to construct confidence sequences $\{ \CI_t \}$ for $\mu$ from the observations $X_1, X_2, \dots$ drawn from any distribution in $\cP^{\mu, \sigma^2}$, that is,
%\begin{equation}
 %   \inf_{\cP^{\mu, \sigma^2}} \Pr[\forall t \in \mathbb N^+, \ \mu \in \CI_t] \ge 1 - \alpha.
%\end{equation}

The task of this paper is to construct confidence sequences $\{ \CI_t \}$ for $\mu$ from the observations $X_1, X_2, \dots$, that is,
\begin{equation}
    \Pr[\forall t \in \mathbb N^+, \ \mu \in \CI_t] \ge 1 - \alpha.
\end{equation}

We remark that our assumptions, apart from incorporating the i.i.d.\ case (with $\Exp[X_t] = \mu$ and $\Var[X_t] \le \sigma^2$) mentioned in \cref{sec:intro}, allow for a wide range of settings. The \cref{ass:mean} is equivalent to stating that the sequence $\{ X_t - \mu \}$ forms a \emph{martingale difference} (viz. $\{ \sum_{i=1}^t (X_i - \mu) \}$ is a martingale), which oftentimes arises as the model for non-i.i.d., state-dependent noise in the optimization, control, and finance literature (see e.g.\ \citet{kushner2003stochastic}). A very simple example would be the drift estimation setting with the stochastic differential equation $\d G_t = \sigma f(G_t, t) \d W_t + \mu \d t$, where $f$ is a function such that $|f(G_t, t)| \le 1$ and $W_t$ denotes the standard Wiener process. When sampling $X_t = G_t - G_{t-1}$, the resulting process $\{ X_t \}$ satisfies our \cref{ass:mean} and \cref{ass:var}. 

We further note that \cref{ass:mean} and \cref{ass:var} can be weakened to drifting conditional means and growing conditional $p$\textsuperscript{th} central moment bound ($1 < p \le 2$) respectively, indicating that our framework may encompass \emph{any} $L^p$ stochastic process $\{X_t\}$. These issues are to be addressed in \cref{sec:inf-var-in-text} and \cref{sec:non-constant}, while we follow \cref{ass:mean} and \cref{ass:var} in our exposition for the sake of simplicity. Finally, we remark that the requirement for a known moment upper bound like \cref{ass:var} may seem restrictive, but is known to be \emph{minimal} in the sense that no inference on $\mu$ would be possible in its absence, which we shall discuss in \cref{sec:minimality}. 

Throughout the paper, an auxiliary process $\{\lambda_t\}_{t \in \mathbb N^+}$ consisting of predictable coefficients (i.e.\ each $\lambda_t$ is an $\mathcal{F}_{t-1}$-measurable random variable) is used to fine-tune the intervals. 
%We shall use the following notation for partial sum,
%\begin{equation}\label{eqn:stkl}
%     S_t^{k, \ell} = \sum_{i=1}^t \lambda_i^k X_i^\ell,
%\end{equation}
%where the $k,\ell$ on the RHS are powers.
We denote by $[m \pm w]$ the open or closed (oftentimes the endpoints do not matter) interval $[m-w, m+w]$ or $(m-w, m+w)$ to simplify the lengthy expressions for CIs and CSs; and by $\min(I)$, $\max(I)$ respectively the lower and upper endpoints of an interval $I$. The asymptotic notations follows the conventional use: for two sequences of nonnegative numbers $\{a_t\}$ and $\{b_t\}$, we write $a_t = \mathcal O(b_t)$ if $\limsup_{t\to\infty} a_t/b_t < \infty$, $a_t = \Theta(b_t)$ if both $a_t = \mathcal O(b_t)$ and $b_t = \mathcal O(a_t)$ hold, and $a_t \asymp b_t$ if $\lim_{t\to\infty} a_t/b_t$ exists and $0 < \lim_{t\to\infty} a_t/b_t < \infty$. We write $a_t = \polylog(b_t)$ if there exists a universal polynomial $p$ such that $a_t = \mathcal{O}(p(\log b_t))$. Finally, if $a_t = \mathcal{O}(b_t  \polylog(t))$, we say $a_t = \tilde{\mathcal{O}}(b_t)$.

\section{Confidence sequence %Based on 
via
the Dubins-Savage inequality}

The following inequality by \citet{dubins1965tchebycheff} is widely acknowledged to be a seminal result in the martingale literature and it will be the foundation of our first confidence sequence.

\begin{lemma}[Dubins-Savage inequality]\label{lem:ds} Let $\{M_t\}$ be a square-integrable martingale with $M_0 = 0$ and $V_t = \Exp[ (M_t - M_{t-1}) ^2 \mid \mathcal F_{t-1} ]$. Then, for all $a, b > 0$,
\begin{equation}
    \Pr\left[ \exists t\in \mathbb N^+, \; M_t \ge a + b \sum_{i=1}^t V_i \right] \le \frac{1}{ab + 1}.
\end{equation}
\end{lemma}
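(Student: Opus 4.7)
The plan is to recast the claim as a stopping-time estimate and then apply Ville's maximal inequality to a nonnegative supermartingale. Writing $Q_t := \sum_{i=1}^t V_i$ and $T := \inf\{t \in \mathbb N^+ : M_t \ge a + bQ_t\}$, the claim reduces to the hitting-time bound $\Pr[T < \infty] \le 1/(ab+1)$.

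The core of the argument is to exhibit a nonnegative $\mathcal F_t$-adapted supermartingale $L_t$ with $L_0 = 1/(ab+1)$ and $L_T \ge 1$ on the event $\{T < \infty\}$. Once such $L_t$ has been constructed, Ville's maximal inequality gives
\begin{equation*}
\Pr[T < \infty] \;\le\; \Pr[\sup_t L_t \ge 1] \;\le\; L_0 \;=\; \frac{1}{ab+1},
\end{equation*}
which is exactly the claim. So the whole proof hinges on constructing the right $L_t$.

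To find $L_t$, I would search for a smooth two-variable function $\Phi(m,q)$ and set $L_t := \Phi(M_t, Q_t)$. Using $\Exp[M_t - M_{t-1} \mid \mathcal F_{t-1}] = 0$ and $\Exp[(M_t - M_{t-1})^2 \mid \mathcal F_{t-1}] = V_t$, a formal second-order expansion of $\Phi$ around $(M_{t-1}, Q_{t-1})$ indicates that $L_t$ should be a supermartingale provided
\begin{equation*}
\partial_q \Phi + \tfrac12 \partial_{mm} \Phi \;\le\; 0.
\end{equation*}
Together with the boundary conditions $\Phi(a+bq, q) = 1$ (for all $q \ge 0$, to force $L_T \ge 1$) and $\Phi(0,0) = 1/(ab+1)$, and guided by the clean form $1/(1+ab)$ of the target bound, I would try a rational ansatz in which $\Phi(m,q)$ is built from simple polynomial combinations of $1 + bm$ and $1 + b^2 q$, then verify the supermartingale inequality directly.

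The main obstacle is that the Taylor heuristic above is merely \emph{infinitesimal}: in discrete time, the supermartingale inequality $\Exp[\Phi(M_t, Q_t) \mid \mathcal F_{t-1}] \le \Phi(M_{t-1}, Q_{t-1})$ must hold for \emph{every} conditional distribution of the increment $M_t - M_{t-1}$ compatible with the prescribed second moment $V_t$---which, under the heavy-tailed regime of \cref{ass:var}, includes distributions with unbounded support and very asymmetric tails. Converting the PDE-level inequality into a genuinely valid one-step bound requires careful algebraic verification using only the martingale property and the second-moment identity. A secondary technical hurdle is maintaining nonnegativity of $L_t$ when $M_t$ can drift to $-\infty$; I would handle this by first pre-stopping at $T_K := \inf\{t : M_t \le -K\}$, proving the bound with $T_K$ in place of $\infty$, and then letting $K \to \infty$ via monotone convergence.
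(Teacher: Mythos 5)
The paper does not actually prove Lemma~\ref{lem:ds}; it imports it from \citet{dubins1965tchebycheff} and uses it as a black box, so there is no in-paper proof to compare against. Taken on its own terms, your high-level plan --- reduce to a hitting-time bound, exhibit a nonnegative supermartingale $L_t = \Phi(M_t, Q_t)$ with $L_0 = 1/(1+ab)$ and $L_T \ge 1$ on $\{T < \infty\}$, then invoke Ville --- is a recognized route to Dubins--Savage. But the proposal stops short of being a proof, because the object that does all the work is never produced: you do not write down $\Phi$, and you do not verify the one-step inequality $\Exp[\Phi(M_t,Q_t)\mid\mathcal F_{t-1}]\le\Phi(M_{t-1},Q_{t-1})$. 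You correctly flag this as the crux (and correctly observe that the second-order Taylor heuristic is only infinitesimal and must be replaced by an argument using nothing beyond the conditional mean-zero and second-moment identities), but identifying the gap is not the same as closing it. Your rational ansatz in $1+bm$ and $1+b^2q$ is a sensible guess --- indeed $\Phi(m,q)=(1+bm)/(1+ab+b^2q)$ already satisfies both boundary conditions and, via $\Exp[1+bM_t\mid\mathcal F_{t-1}]=1+bM_{t-1}$ together with the denominator increase by $b^2V_t$, is a supermartingale \emph{as long as} $1+bM_{t-1}>0$ --- but the case $1+bM_{t-1}\le 0$ is precisely where the argument breaks, and that is the part the proposal leaves undone. (As a minor point, the heavy-tail condition of Assumption~\ref{ass:var} is not relevant here; the lemma is stated for an arbitrary square-integrable martingale, so the ``very asymmetric tails'' concern is about general increments, not about that assumption.)

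The nonnegativity fix you sketch is also not sound as stated. If $\Phi$ contains a factor of $1+bm$ (as any polynomial ansatz in $1+bm$ will), it becomes negative once $M_t < -1/b$. Pre-stopping at $T_K=\inf\{t: M_t\le -K\}$ and letting $K\to\infty$ does not rescue Ville: for any $K>1/b$ the stopped process still passes through states with $\Phi<0$ before ever reaching $-K$, and moreover $M_{T_K}$ can overshoot $-K$ by an arbitrary amount, so $\Phi(M_{T_K},Q_{T_K})$ is not controlled. A correct repair would be either to construct a $\Phi$ that is nonnegative on all of $\mathbb R\times[0,\infty)$, or to stop at the first time $\Phi$ becomes nonpositive and then argue separately that no probability mass is lost --- which for the linear ansatz is false, since $M$ can dip below $-1/b$ and later cross the upper boundary. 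Until the supermartingale is exhibited, its one-step inequality is verified for arbitrary increment distributions, and nonnegativity is handled cleanly, this remains a plan rather than a proof.
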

%Above, $v_t$ being an upper bound on $ \Exp[ (M_t - M_{t-1}) ^2 | \mathcal F_{t-1} ]$ also suffices. (you don't have to mention this. This is handled in the proof in the appendix)
We prove \cref{lem:ds} in \cref{sec:ds-discussion} for completeness.
Recall from \cref{sec:setup} that $\{ \lambda_t \}_{t \in \mathbb N ^+}$ is a sequence of predictable coefficients. Define processes
\begin{equation}\label{eqn:ds-martingale}
    M^+_t = \sum_{i=1}^t \lambda_i (X_i -\mu), \quad  M^-_t = \sum_{i=1}^t -\lambda_i (X_i -\mu).
\end{equation}
As a consequence of \cref{ass:mean}, $\Exp[\lambda_t (X_t -\mu) \mid \mathcal{F}_{t-1}] =0$. Hence both of $\{ M^+_t\}$ and $\{ M^-_t\}$ are martingales. Applying \cref{lem:ds} to these two martingales yields the following result.
\begin{theorem}[Dubins-Savage confidence sequence]\label{thm:cids} Let $\{ \lambda_t \}_{t \in \mathbb N ^+}$ be any predictable process. The following intervals $\{\CIDS_t\}$ form a $(1-\alpha)$-confidence sequence of $\mu$:
\begin{equation}\label{eqn:cids}
    \CIDS_t = \left[ \frac{\sum_{i=1}^t \lambda_i X_i \pm \left(2/\alpha - 1 + \sigma^2 \sum_{i=1}^t \lambda_i^2 \right)  }{\sum_{i=1}^t \lambda_i  } \right].
\end{equation}
%\begin{equation}\label{eqn:cids}
%    \CIDS_t = \left[ \frac{\sum_{i=1}^t \lambda_i X_i \pm \left(2/\alpha - 1 + \sigma^2 \sum_{i=1}^t \lambda_i^2 \right)  }{\sum_{i=1}^t \lambda_i } \right].
%\end{equation}
\end{theorem}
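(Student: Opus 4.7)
My plan is to apply the Dubins-Savage inequality (\cref{lem:ds}) separately to the two martingales $\{M^+_t\}$ and $\{M^-_t\}$ defined in \eqref{eqn:ds-martingale}, and then union-bound to obtain a two-sided confidence sequence, which I finally invert to isolate $\mu$.

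First, I would compute the conditional quadratic variation of $\{M^+_t\}$:
\begin{equation}
V_i = \Exp[(M^+_i - M^+_{i-1})^2 \mid \mathcal{F}_{i-1}] = \lambda_i^2 \, \Exp[(X_i - \mu)^2 \mid \mathcal{F}_{i-1}] \le \lambda_i^2 \sigma^2,
\end{equation}
where the inequality uses \cref{ass:var} and the predictability of $\lambda_i$. Exactly the same bound holds for $\{M^-_t\}$. I would then pick the parameters $a = 2/\alpha - 1$ and $b = 1$ in \cref{lem:ds}, so that $1/(ab+1) = \alpha/2$. Since $b\sum_{i=1}^t V_i \le \sigma^2 \sum_{i=1}^t \lambda_i^2$, the event $\{M^+_t \ge (2/\alpha - 1) + \sigma^2 \sum_{i=1}^t \lambda_i^2\}$ is contained in $\{M^+_t \ge a + b \sum_{i=1}^t V_i\}$, and Dubins-Savage gives that this happens for some $t \in \mathbb N^+$ with probability at most $\alpha/2$. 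The identical argument applied to $\{M^-_t\}$ provides a matching lower-deviation control.

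A union bound yields that, with probability at least $1 - \alpha$, for every $t \in \mathbb N^+$ simultaneously,
\begin{equation}
\left| \sum_{i=1}^t \lambda_i (X_i - \mu) \right| < \left(2/\alpha - 1\right) + \sigma^2 \sum_{i=1}^t \lambda_i^2.
\end{equation}
Assuming the $\lambda_i$ are chosen so that $\sum_{i=1}^t \lambda_i > 0$ (the natural regime; signs can be absorbed otherwise), I would divide through by $\sum_{i=1}^t \lambda_i$ and rearrange to obtain exactly $\mu \in \CIDS_t$ as displayed in \eqref{eqn:cids}.

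Honestly, I don't expect any real obstacle here: once \cref{lem:ds} is in hand, the proof is mostly bookkeeping. The only mild subtlety is the correct allocation of the error budget (splitting $\alpha$ into $\alpha/2 + \alpha/2$ across the two one-sided applications) and the choice $b = 1$, which is what makes the coefficient of $\sum \lambda_i^2$ in the half-width equal to $\sigma^2$ rather than something larger. One could in principle optimize $(a,b)$ subject to $ab + 1 = 2/\alpha$ to tighten the bound for a particular sample size, but producing a uniform-in-$t$ CS with a clean closed form naturally suggests $b=1$.
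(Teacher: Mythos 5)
Your proposal is correct and follows essentially the same route as the paper's proof: apply \cref{lem:ds} to $\{M^+_t\}$ and $\{M^-_t\}$ with $a=(2/\alpha-1)/b$, bound $V_i \le \lambda_i^2\sigma^2$ via \cref{ass:var}, and union-bound. The paper makes the same observation that tuning $b$ is redundant (absorbable into $\{\lambda_i\}$) before fixing $b=1$, so there is no meaningful difference.
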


The straightforward proof of this theorem is in \cref{sec:pf}.

Now, we shall choose the coefficients $\{\lambda_t\}$ that appear in the theorem in order to optimize the interval widths $\{ |\CIDS_t| \}$. Our heuristic for optimizing the width is inspired by \citet[Equations (24--28)]{waudby2020estimating}; that is, we first fix a target time $t^\star$ and consider 
\begin{equation}\label{eqn:const-lambdas}
\lambda_1 = \lambda_2 = \dots =  \lambda^\star,    
\end{equation}
a constant sequence. After finding the $\lambda^\star$ that minimizes $|\CIDS_{t^\star}|$, we set $\lambda_{t^\star}$ to this value.
The detailed tuning procedure can be found in \cref{sec:ds-tuning}, where we show that
\begin{equation}\label{eqn:cids-heuristic-choice}
    \lambda_t = \sqrt{\frac{2/\alpha - 1}{\sigma^2 t}}
\end{equation}
is a prudent choice. Then, the width of the confidence sequence at time $t$ is
\begin{equation}\label{eqn:dsci-width}
    |\CIDS_{t}| =  \frac{ 2\sqrt{2/\alpha - 1} \, \sigma (1 +  \sum_i i^{-1})}{\sum_i i^{-1/2}} \asymp \frac{ \sqrt{2/\alpha - 1} \, \sigma \log t}{\sqrt{t}}.
\end{equation}

Let us briefly compare the $\mathcal{O}(\alpha^{-1/2})$ rate of width growth, and the $\Otilde(t^{-1/2})$ rate of width shrinkage we achieved in \eqref{eqn:dsci-width} with the well-known case of confidence intervals. Both the $\mathcal{O}(\alpha^{-1/2})$ rate of growth and the $\mathcal{O}(t^{-1/2})$ rate of shrinkage of the Chebyshev CIs \eqref{eqn:chebci}, which hold under a stronger assumption than our paper (i.e.\ independence and variance upper bound $\Var[X_i] \le \sigma^2$), are matched by our Dubins-Savage CS, up to the $\log t$ factor. It is worth remarking that the Chebyshev CIs $\{  \CIch_t \}$ \emph{never} form a confidence sequence at any level --- almost surely, there exists some $\CIch_{t_0}$ that does not contain $\mu$.

While the $\Otilde(t^{-1/2})$ rate of shrinkage cannot
%known to be a lower bound fundamentally impossible to be
be improved %due to the law of the iterated logarithm \citep{howard2021time}
(which shall be discussed in \cref{sec:lb}), we shall see in the following sections that growth rates sharper than $\mathcal{O}(\alpha^{-1/2})$ can be achieved.
The sharper rates require eschewing the (weighted) empirical means, e.g.\ the $\frac{\sum \lambda_i X_i}{\sum \lambda_i}$ that centers the interval $\CIDS_t$ in \eqref{eqn:cids} above, because they have equally heavy tails as the observations $\{X_t\}$.

%Indeed, it is clear that the argument \emph{contra} the empirical mean by \citet[Section 2]{lugosi2019mean} works for weighted empirical mean, like the $\frac{\sum \lambda_i X_i}{\sum \lambda_i}$ that centers the interval $\CIDS_t$ in \eqref{eqn:cids} above as well.

\section{Intermezzo: review of 
%Nonnegative Supermartingales and
Ville's inequality} \label{sec:ville}

The remaining two types of confidence sequence in this paper are both based on the technique of constructing an appropriate pair of \emph{nonnegative supermartingales} \citep{howard2020time}. This powerful technique results in dramatically tighter confidence sequences compared to the previous approach \emph{\`a la} Dubins-Savage.

A stochastic process $\{ M_t \}_{t \in \mathbb N}$, adapted to the filtration $\{ \mathcal{F}_t \}_{t \in \mathbb N}$, is called a supermartingale if $\Exp[M_t \mid \mathcal{F}_{t-1}] \le M_{t-1}$ for all $t \in \mathbb N^+$. Since many of the supermartingales we are to construct are in an exponential, multiplicative form, we frequently use the following (obvious) lemma.

\begin{lemma}\label{lem:mult-nsm} Let $M_t = \prod_{i=1}^t m_i $ where each $m_i \ge 0$ is $\mathcal{F}_i$-measurable. Then, the process $\{ M_t \}$ is a nonnegative supermartingale if $\Exp[m_t\mid \mathcal{F}_{t-1}] \le 1$ for all $t \in \mathbb N^+$.
\end{lemma}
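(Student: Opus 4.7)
The plan is to verify the three defining properties of a nonnegative supermartingale --- nonnegativity/adaptedness, integrability, and the conditional inequality --- each of which reduces to a one-line check under the given hypotheses.

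First, I would dispense with nonnegativity and $\mathcal{F}_t$-adaptedness: since each $m_i \ge 0$ is $\mathcal{F}_i$-measurable and $\mathcal{F}_i \subseteq \mathcal{F}_t$ for all $i \le t$, the product $M_t = \prod_{i=1}^t m_i$ is a nonnegative, $\mathcal{F}_t$-measurable random variable. Adopting the usual empty-product convention $M_0 = 1$ makes $\{M_t\}_{t \in \mathbb N}$ an adapted process to $\{\mathcal{F}_t\}_{t \in \mathbb N}$.

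The core step is the conditional inequality. Factoring $M_t = M_{t-1} \cdot m_t$, the factor $M_{t-1}$ is $\mathcal{F}_{t-1}$-measurable and nonnegative, so ``taking out what is known'' gives
\begin{equation*}
    \Exp[M_t \mid \mathcal{F}_{t-1}] \;=\; M_{t-1} \, \Exp[m_t \mid \mathcal{F}_{t-1}] \;\le\; M_{t-1},
\end{equation*}
where the inequality uses the hypothesis $\Exp[m_t \mid \mathcal{F}_{t-1}] \le 1$. Integrability then follows by induction: taking unconditional expectations and iterating yields $\Exp[M_t] \le \Exp[M_{t-1}] \le \dots \le \Exp[M_0] = 1 < \infty$, so each $M_t$ is integrable and the conditional expectation above is well-defined.

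There is essentially no obstacle here --- as the author flags parenthetically, the result is an immediate consequence of the pull-out property of conditional expectation. The only point worth stating carefully is that one needs $M_{t-1}$ to be $\mathcal{F}_{t-1}$-measurable and almost surely finite before pulling it out of the conditional expectation; both are secured by the measurability assumption on each $m_i$ and the inductive integrability bound $\Exp[M_{t-1}] \le 1$.
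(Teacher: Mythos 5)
The paper states this lemma without proof, merely flagging it as ``(obvious),'' so there is no written argument to compare against; your proof is the correct and standard one — factor $M_t = M_{t-1} m_t$, pull out the nonnegative $\mathcal{F}_{t-1}$-measurable factor $M_{t-1}$, apply the hypothesis $\Exp[m_t \mid \mathcal{F}_{t-1}] \le 1$, and obtain integrability by induction from $\Exp[M_0] = 1$. The only cosmetic remark is that since $M_t \ge 0$, the conditional expectation is well-defined in $[0,\infty]$ regardless, so the integrability step and the pull-out step need not be sequenced as carefully as you suggest; but your argument is sound as written.
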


A remarkable property of nonnegative supermartingales is Ville's inequality \citep{ville1939etude}. It extends Markov's inequality from a single time to an infinite time horizon.

\begin{lemma}[Ville's inequality]\label{lem:vi}
Let $\{M_t\}_{t \in \mathbb N}$ be a nonnegative supermartingale with $M_0=1$. Then for all $\alpha \in (0, 1)$,
\begin{equation}\label{eqn:vil}
    \Pr[ \exists t \in \mathbb N^+, \; M_t \ge 1/\alpha ] \le \alpha.
\end{equation}
\end{lemma}
A very concise proof of \cref{lem:vi} can be found in, for example, \citet[Section 6.1]{howard2020time}.

\section{Confidence sequence
%via a Self-normalized Nonnegative Supermartingale
by self-normalization}

Our second confidence sequence comes from a predictable-mixing version of \citet[Proposition 12]{delyon2009exponential} and \citet[Lemma 3 (f)]{howard2020time}.
%Recall from \cref{sec:setup} that $\{ \lambda_t \}_{t \in \mathbb N ^+}$ is a predictable process. 
\begin{lemma}\label{lem:sn-martingale} Let $\{ \lambda_t \}_{t \in \mathbb N ^+}$ be any predictable process.
The following process is a nonnegative supermartingale:
\begin{equation}\label{eqn:sn-nsm}
    \Msn_t = \prod_{i=1}^t \exp \left(\lambda_i (X_i - \mu) - \frac{\lambda^2_i ( (X_i - \mu)^2 + 2 \sigma^2 )}{6} \right).
\end{equation}
\end{lemma}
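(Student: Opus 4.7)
The plan is to apply \cref{lem:mult-nsm} to the multiplicative form of $\Msn_t$. Writing $\Msn_t = \prod_{i=1}^t m_i$ with
\[
m_i = \exp\!\left(\lambda_i (X_i - \mu) - \frac{\lambda_i^2 ((X_i - \mu)^2 + 2\sigma^2)}{6}\right),
\]
each $m_i$ is nonnegative and $\mathcal{F}_i$-measurable, so it suffices to verify $\Exp[m_t \mid \mathcal{F}_{t-1}] \le 1$ for every $t$. Since $\lambda_t$ is $\mathcal{F}_{t-1}$-measurable, the $e^{-\lambda_t^2 \sigma^2/3}$ piece factors out of the conditional expectation, reducing the task to proving the single estimate
\[
\Exp\!\left[\exp\!\left(\lambda_t (X_t - \mu) - \frac{\lambda_t^2 (X_t - \mu)^2}{6}\right) \Big| \mathcal{F}_{t-1}\right] \le e^{\lambda_t^2 \sigma^2 / 3}.
\]

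The main obstacle, and the only nontrivial step, is the pointwise exponential inequality
\[
\exp\!\left(y - \frac{y^2}{6}\right) \le 1 + y + \frac{y^2}{3}, \qquad \forall y \in \mathbb{R},
\]
from which everything else is bookkeeping. I would prove it by noting that the quadratic $1 + y + y^2/3$ is strictly positive (its discriminant is $1 - 4/3 < 0$), taking logarithms, and analyzing $h(y) := \log(1 + y + y^2/3) - y + y^2/6$. A direct computation collapses $h'(y)$ to the clean form $y^3 / (3y^2 + 9y + 9)$; the denominator is positive (discriminant $81 - 108 < 0$), so $h'$ shares the sign of $y^3$. Hence $h$ attains its global minimum at $y = 0$, where $h(0) = 0$, giving the inequality for all real $y$.

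With the pointwise bound in hand, I substitute $y = \lambda_t(X_t - \mu)$ and take conditional expectations. \Cref{ass:mean} kills the linear term and \cref{ass:var} controls the quadratic term:
\[
\Exp\!\left[1 + \lambda_t(X_t - \mu) + \frac{\lambda_t^2 (X_t - \mu)^2}{3} \Big| \mathcal{F}_{t-1}\right] \le 1 + \frac{\lambda_t^2 \sigma^2}{3} \le e^{\lambda_t^2 \sigma^2 / 3},
\]
where the final step uses $1 + x \le e^x$. Multiplying back by $e^{-\lambda_t^2 \sigma^2/3}$ yields $\Exp[m_t \mid \mathcal{F}_{t-1}] \le 1$, and \cref{lem:mult-nsm} concludes the proof.
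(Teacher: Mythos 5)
Your proof is correct and follows essentially the same route as the paper's: factor out the predictable term $e^{-\lambda_t^2\sigma^2/3}$, bound $\exp(y - y^2/6) \le 1 + y + y^2/3$, apply Assumptions~\ref{ass:mean} and~\ref{ass:var}, finish with $1+x \le e^x$, and invoke \cref{lem:mult-nsm}. The only difference is that the paper cites \citet[Proposition 12]{delyon2009exponential} for the pointwise inequality, whereas you give a short self-contained calculus proof of it (the derivative computation $h'(y) = y^3/(3y^2+9y+9)$ checks out); this is a harmless and welcome addition.
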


The proof is in \cref{sec:pf}. We can obtain another nonnegative supermartingale by flipping $\{\lambda_t\}$ into $\{ -\lambda_t \}$ in \eqref{eqn:sn-nsm}.
Applying Ville's inequality (\cref{lem:vi}) on the two nonnegative supermartingales, we have the following result which is again proved in \cref{sec:pf}.

\begin{lemma}[Self-normalized \emph{anti}confidence sequence]\label{lem:anti-cisn} Let $\{ \lambda_t \}_{t \in \mathbb N ^+}$ be any predictable process. Define
\begin{gather}
 %S_t^{k, \ell} = \sum_{i=1}^t \lambda_i^k X_i^\ell; \label{eqn:skl-sn}
 %\\
    U_t^{+} = \frac{\sum_{i=1}^t \lambda_i^2 X_i}{3} - \sum_{i=1}^t \lambda_i, \quad U_t^{-} = \frac{\sum_{i=1}^t \lambda_i^2 X_i}{3} + \sum_{i=1}^t \lambda_i. \label{eqn:upm}
\end{gather}
We further define the interval $\aCISNp_{t}$ to be\footnote{When the term inside the square root is negative, by convention the interval is taken to be $\varnothing$.}
\begin{equation}\label{eqn:new-acisn+}
    %\scriptsize
    \left[ \frac{ \left( U_t^{+} \right) \pm \sqrt{  \left( U_t^{+}  \right) ^2 -  \frac{2\sum \lambda_i^2}{3}  \left( \log(2/\alpha) - \sum\lambda_i X_i  + \frac{\sum\lambda_i^2 X_i^2 +  2\sigma^2 \sum\lambda_i^2 }{6} \right)  }    }{ \frac{\sum\lambda_i^2}{3} }  \right]
\end{equation}
(where each $\sum$ stands for $\sum_{i=1}^t$), and the interval $\aCISNn_{t}$ to be
\begin{equation}\label{eqn:new-acisn-}
    %\scriptsize
    \left[ \frac{ \left( U_t^{-} \right) \pm \sqrt{  \left( U_t^{-}  \right) ^2 -  \frac{2\sum \lambda_i^2}{3}  \left( \log(2/\alpha) + \sum\lambda_i X_i  + \frac{\sum\lambda_i^2 X_i^2 +  2\sigma^2 \sum\lambda_i^2 }{6} \right)  }    }{ \frac{\sum\lambda_i^2}{3} }  \right].
\end{equation}
Then, both $\{ \aCISNp_{t} \}$ and $\{ \aCISNn_{t} \}$ form a $(1-\alpha/2)$-\emph{anti}confidence sequence for $\mu$. That is,
\begin{gather}
    \Pr\left[\forall t \in \mathbb N^+, \mu \notin {\aCISN}_{t}^+\right] \ge 1-\alpha/2,
    \\
    \Pr\left[\forall t \in \mathbb N^+, \mu \notin {\aCISN}_{t}^-\right] \ge 1-\alpha/2.
\end{gather}
\end{lemma}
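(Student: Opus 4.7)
The plan is to apply Ville's inequality \eqref{eqn:vil} to two instances of the nonnegative supermartingale from \cref{lem:sn-martingale} and then recast each tail event as a quadratic inequality in $\mu$, whose solution set turns out to be precisely the interval in the statement.

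First, I would invoke \cref{lem:sn-martingale} twice: once with the given predictable process $\{\lambda_t\}$ to produce $\Msn_t$ exactly as in \eqref{eqn:sn-nsm}, and once with the sign-flipped process $\{-\lambda_t\}$ in place of $\{\lambda_t\}$ to produce a second nonnegative supermartingale. Since the quadratic correction $\lambda_i^2((X_i-\mu)^2 + 2\sigma^2)/6$ is even in $\lambda_i$, the two supermartingales differ only in the sign of the linear drift $\lambda_i(X_i - \mu)$. Applying Ville's inequality at threshold $2/\alpha$ to each, we obtain that with probability at least $1 - \alpha/2$ the corresponding supermartingale stays strictly below $2/\alpha$ for all $t \in \mathbb N^+$, so the complementary event (in which the supermartingale crosses $2/\alpha$ at some time evaluated at the true $\mu$) has probability at most $\alpha/2$.

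Next, the core algebraic step is to take the logarithm of $\Msn_t \ge 2/\alpha$, expand $(X_i - \mu)^2 = X_i^2 - 2\mu X_i + \mu^2$ inside the exponent, and collect by powers of $\mu$. The event becomes a quadratic inequality $A\mu^2 - B\mu + C \le 0$ with strictly positive leading coefficient $A = (\sum_{i=1}^t \lambda_i^2)/6$; matching terms shows that $B = U_t^+$ as defined in \eqref{eqn:upm}, and $C = \log(2/\alpha) - \sum \lambda_i X_i + (\sum \lambda_i^2 X_i^2 + 2\sigma^2 \sum \lambda_i^2)/6$, which is exactly the parenthesized expression inside the square root of \eqref{eqn:new-acisn+}. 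Solving this quadratic inequality via the usual formula produces the closed interval displayed as $\aCISNp_t$. Repeating the identical computation for the $\{-\lambda_t\}$-supermartingale flips the sign of the linear drift, producing $U_t^-$ and the sign flip on $\sum \lambda_i X_i$ inside the square root, giving $\aCISNn_t$ as in \eqref{eqn:new-acisn-}.

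I do not anticipate a real technical obstacle: both ingredients (invocation of \cref{lem:sn-martingale} together with Ville, and the quadratic rearrangement) are mechanical. The main bookkeeping to be careful about is matching signs in the expansion of $(X_i - \mu)^2$ so that the linear-in-$\mu$ coefficient cleanly becomes $U_t^\pm$ in both the $+$ and $-$ cases; the degenerate regime of a negative discriminant is handled trivially by the footnote convention $\aCISNp_t = \aCISNn_t = \varnothing$, for which the membership statement $\mu \notin \aCISN_t^{\pm}$ is vacuous.
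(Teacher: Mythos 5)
Your proof is correct and follows essentially the same route as the paper: apply Ville's inequality at level $2/\alpha$ to the supermartingale from Lemma~\ref{lem:sn-martingale} (and then to its $\{-\lambda_t\}$ counterpart), rewrite the threshold-crossing event as a quadratic inequality in $\mu$ with leading coefficient $(\sum\lambda_i^2)/6$ and linear coefficient $U_t^{\pm}$, and solve it via the quadratic formula to recover the displayed intervals. The bookkeeping you describe — expanding $(X_i-\mu)^2$, matching the discriminant to the parenthesized expression, and handling the empty-interval case by convention — matches the paper's derivation exactly.
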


Applying union bound on \cref{lem:anti-cisn} immediately gives rise to the following confidence sequence.

\begin{theorem}[Self-normalized confidence sequence] \label{thm:cisn}
The following random sets $\{\CISN_t\}$ form a $(1-\alpha)$-confidence sequence of $\mu$.
\begin{equation}
    \CISN_t = \mathbb R \setminus \left( \aCISNp_{t} \cup  \aCISNn_{t} \right),
\end{equation}
where $\aCISNp_{t}$ and $\aCISNn_{t}$ are defined in \eqref{eqn:new-acisn+} and \eqref{eqn:new-acisn-}.
\end{theorem}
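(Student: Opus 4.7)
The plan is to deduce this theorem as an immediate consequence of \cref{lem:anti-cisn} via a union bound, since the bulk of the work (constructing the supermartingale \eqref{eqn:sn-nsm}, invoking Ville's inequality, and inverting the resulting quadratic inequality in $\mu$ to obtain the explicit intervals \eqref{eqn:new-acisn+} and \eqref{eqn:new-acisn-}) has already been carried out.

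First, I would rewrite the CS coverage event in terms of the anticonfidence events. By the definition $\CISN_t = \mathbb R \setminus (\aCISNp_t \cup \aCISNn_t)$, the containment $\mu \in \CISN_t$ is equivalent to the conjunction $\mu \notin \aCISNp_t$ and $\mu \notin \aCISNn_t$. Quantifying over $t \in \mathbb N^+$ and noting that a universal quantifier distributes over conjunction, the event $\{\forall t,\ \mu \in \CISN_t\}$ equals the intersection $\{\forall t,\ \mu \notin \aCISNp_t\} \cap \{\forall t,\ \mu \notin \aCISNn_t\}$.

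Second, I would bound the probability of the complement by the union bound. By \cref{lem:anti-cisn}, each of the two events $\{\exists t,\ \mu \in \aCISNp_t\}$ and $\{\exists t,\ \mu \in \aCISNn_t\}$ has probability at most $\alpha/2$. Therefore
\begin{equation*}
\Pr\left[\exists t \in \mathbb N^+,\ \mu \notin \CISN_t\right] = \Pr\left[\exists t,\ \mu \in \aCISNp_t \cup \aCISNn_t\right] \le \frac{\alpha}{2} + \frac{\alpha}{2} = \alpha,
\end{equation*}
from which the desired $(1-\alpha)$-CS guarantee follows by taking complements.

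There is no real obstacle here: the only conceptual point worth flagging is the sign/orientation bookkeeping, namely verifying that the pair of antisequences built from the two supermartingales generated by $\{\lambda_t\}$ and $\{-\lambda_t\}$ together exclude deviations of $\mu$ on both sides of $\widehat \mu_t$, so that their union indeed covers the full ``bad'' region and the complement $\CISN_t$ constitutes a genuine two-sided confidence set. Beyond this routine check, the proof reduces to the two lines of union-bound algebra displayed above.
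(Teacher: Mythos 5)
Your proof is correct and takes the same approach as the paper, which also derives \cref{thm:cisn} as an immediate consequence of \cref{lem:anti-cisn} via a union bound over the two anticonfidence sequences. The set-theoretic rewriting and the complement/union-bound algebra you spell out are exactly the (implicit) content of the paper's one-line justification.
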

It is not difficult to perform a cursory analysis on the topology of $\CISN_t$. Without loss of generality assume $\mu = 0$ since the method is translation invariant. When we take $\{\lambda_t\}$ to be a decreasing sequence, with high probability $\sum \lambda_i^2 X_i^2$ will be much smaller than $\sum \lambda_i$ in the long run, implying that $U_t^+ < 0$ while $U_t^- > 0$. Thus, with high probability,
\begin{equation}
    \max(\aCISNp_{t}) \le \frac{U_t^+ + |U_t^+|}{\sum \lambda_i^2/3} = 0 = \frac{U_t^- - |U_t^-|}{\sum \lambda_i^2/3}  \le \min(\aCISNn_{t}).
\end{equation}
\setlength{\columnsep}{16pt}%
\begin{wrapfigure}{r}{0.3\textwidth}
\centering
\includegraphics[width=0.3\textwidth]{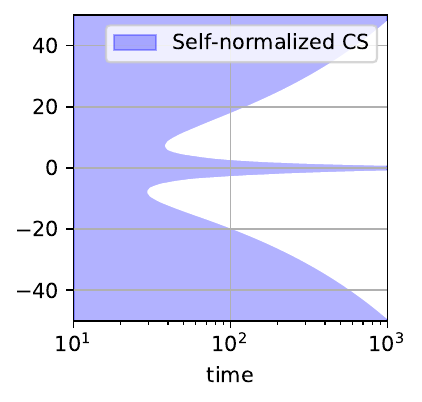}
\vspace{-30pt}
\caption{\small Self-normalized confidence sequence $\{ \CISN_t \}$ under $X_t \overset{\text{i.i.d.}}{\sim} \mathcal{N}({0,1})$, $\lambda_t = 1/\sqrt{t}$, and $\alpha = 0.05$. The trichotomous topology \eqref{eqn:cisn-decomp} manifests after around $t = 30$.} 
\label{fig:sncs}
\vspace{-60pt}
\end{wrapfigure}
Therefore, we expect the disjoint union of three intervals
\begin{multline}\label{eqn:cisn-decomp}
     \CISN_t = \left(-\infty, \min(\aCISNp_{t}) \right] \cup \\
    \left[\max(\aCISNp_{t}), \min(\aCISNn_{t}) \right] 
    \cup \left[ \max(\aCISNn_{t}), \infty  \right) \\
    =: \LSN_t \cup \MSN_t \cup \USN_t
\end{multline}
to be the typical topology of $\CISN_t$ for large $t$. Indeed, we demonstrate this with the a simple experiment under $X_t \overset{\text{i.i.d.}}{\sim} \mathcal{N}(0,1)$ and $\lambda_t = 1/\sqrt{t}$, plotted in \cref{fig:sncs}.

We now come to the question of choosing the predictable process $\{\lambda_t\}$ to optimize the confidence sequence. Since the set $\CISN_t$ always has infinite Lebesgue measure, a reasonable objective is to ensure $\min(\aCISNp_{t}) \to -\infty$, $\max(\aCISNn_{t}) \to \infty$, and make the middle interval $\MSN_t$ as narrow as possible. We resort to the same heuristic approach as in the Dubins-Savage case when optimizing $|\MSN_t|$, which is detailed in the \cref{sec:sn-tuning}. The result of our tuning is
\begin{equation}\label{eqn:sncs-heuristic}
    \lambda_t = \sqrt{\frac{6t \log(2/\alpha)}{t( \sum_{i=1}^{t-1} X_i^2  +2 \sigma^2 t) - (\sum_{i=1}^{t-1} X_i)^2}}.
\end{equation}
Indeed, $\min(\aCISNp_{t}) \to -\infty$, $\max(\aCISNn_{t}) \to \infty$ almost surely if we set $\{\lambda_t\}$ as above. We remark that the removal of the ``spurious intervals'' $\LSN_t$ and $\USN_t$ is easily achieved. For example, split $\alpha = \alpha' + \alpha''$ such that $\alpha'' \ll \alpha'$. First, construct a $(1-\alpha'')$-CS $\{ \CI_t^\circ \}$ that does not have such spurious intervals -- e.g., the Dubins-Savage CS \eqref{eqn:cids}. Next, construct the self-normalized CS $\{ \CISN_t \}$ at $1-\alpha'$ confidence level. A union bound argument yields that $\{ \CI_t^\circ \cap \CISN_t  \}$ is a $(1-\alpha)$-CS, and the intersection with $\{ \CI_t^\circ \}$ helps to get rid of the spurious intervals $\LSN_t$ and $\USN_t$.

\section{Confidence sequence via Catoni supermartingales}\label{sec:catoni}

Our last confidence sequence is inspired by \citet{catoni2012challenging}, where under only the finite variance assumption, the author constructs an M-estimator for mean that is $\mathcal{O}(\log(1/\alpha))$-close to the true mean with probability at least $1-\alpha$; hence a corresponding $(1-\alpha)$-CI whose width has $\mathcal{O}(\log(1/\alpha))$ growth rate exists; cf.\ \eqref{eqn:chebci}. We shall sequentialize the idea of  \citet{catoni2012challenging} via constructing two nonnegative supermartingales which we shall call the \emph{Catoni Supermartingales}.

Following \citet[Equation (2.1)]{catoni2012challenging}, we say that $\phi:\mathbb R \to \mathbb R$ is a \emph{Catoni-type influence function}, if it is increasing and $-\log(1 - x + x^2/2) \le \phi(x) \le \log(1 + x + x^2/2)$. A simple Catoni-type influence function is 
\begin{equation}
    \phi (x) = \begin{cases} \log(1 + x + x^2/2), & x \ge 0; \\ -\log(1 - x + x^2/2), & x < 0. \end{cases}
\end{equation}
\begin{lemma}[Catoni supermartingales]\label{lem:cat-sm} Let $\{ \lambda_t \}_{t \in \mathbb N ^+}$ be any predictable process, and
let $\phi$ be a Catoni-type influence function. The following processes are nonnegative supermartingales:
\begin{gather}\label{eqn:cat-sm}
    \Mcat_t = \prod_{i=1}^t \exp \left(\phi(\lambda_i (X_i  -\mu)) - \lambda^2_i \sigma^2/2 \right);\\  \Ncat_t = \prod_{i=1}^t \exp \left(-\phi(\lambda_i (X_i  -\mu)) - \lambda^2_i \sigma^2/2 \right) .
\end{gather}
\end{lemma}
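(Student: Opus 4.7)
The plan is to apply \cref{lem:mult-nsm} to both products: since each factor is manifestly nonnegative and $\mathcal{F}_t$-measurable (as $\lambda_t$ is predictable and $X_t$ is $\mathcal{F}_t$-measurable), it suffices to show that each multiplicative increment has conditional expectation at most $1$, i.e.\ for every $t \in \mathbb{N}^+$,
\begin{equation*}
\Exp\!\left[\exp\!\left(\phi(\lambda_t(X_t - \mu)) - \lambda_t^2 \sigma^2/2\right) \,\Big|\, \mathcal{F}_{t-1}\right] \le 1,
\end{equation*}
and likewise with $\phi$ replaced by $-\phi$. Equivalently, pulling the deterministic (given $\mathcal{F}_{t-1}$) factor $\exp(-\lambda_t^2\sigma^2/2)$ out, I need to bound $\Exp[\exp(\pm\phi(\lambda_t(X_t-\mu))) \mid \mathcal{F}_{t-1}]$ by $\exp(\lambda_t^2\sigma^2/2)$.

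First I would exploit the defining inequalities for a Catoni-type influence function. From $\phi(x) \le \log(1 + x + x^2/2)$ I get $\exp(\phi(x)) \le 1 + x + x^2/2$, and from $-\phi(x) \le \log(1 - x + x^2/2)$ I get $\exp(-\phi(x)) \le 1 - x + x^2/2$. Substituting $x = \lambda_t(X_t - \mu)$ and taking conditional expectation given $\mathcal{F}_{t-1}$, the linear term vanishes by \cref{ass:mean} (because $\lambda_t$ is predictable, so $\Exp[\lambda_t(X_t-\mu)\mid\mathcal{F}_{t-1}] = \lambda_t\Exp[X_t-\mu\mid\mathcal{F}_{t-1}] = 0$), and the quadratic term is bounded by \cref{ass:var} as $\Exp[\lambda_t^2(X_t-\mu)^2\mid\mathcal{F}_{t-1}] \le \lambda_t^2\sigma^2$. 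This yields, for both the $+\phi$ and the $-\phi$ case,
\begin{equation*}
\Exp[\exp(\pm\phi(\lambda_t(X_t-\mu))) \mid \mathcal{F}_{t-1}] \le 1 + \lambda_t^2\sigma^2/2.
\end{equation*}

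Finally I would close the argument by the elementary inequality $1 + u \le \e^u$ applied at $u = \lambda_t^2\sigma^2/2$, which gives $1 + \lambda_t^2\sigma^2/2 \le \exp(\lambda_t^2\sigma^2/2)$, exactly matching the compensator. Multiplying through by $\exp(-\lambda_t^2\sigma^2/2)$ yields the required bound of $1$ on each conditional factor, and \cref{lem:mult-nsm} then delivers both nonnegative supermartingale claims simultaneously. There is no real obstacle here; the only subtlety to flag is that the Catoni influence function must be applied to $\lambda_t(X_t-\mu)$ rather than $X_t-\mu$ alone, and the asymmetry of $\phi$ (unlike a symmetric quadratic bound) is precisely what the paired inequalities $-\log(1-x+x^2/2) \le \phi(x) \le \log(1+x+x^2/2)$ are designed to absorb, giving the same $\lambda_t^2\sigma^2/2$ compensator for $\Mcat_t$ and $\Ncat_t$.
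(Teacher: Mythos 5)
Your proposal is correct and follows essentially the same route as the paper's proof: bound $\exp(\pm\phi(x))$ by $1 \pm x + x^2/2$ using the defining inequalities of a Catoni-type influence function, take conditional expectations to kill the linear term and bound the quadratic term by $\lambda_t^2\sigma^2$, and close with $1 + u \le \e^u$ before invoking \cref{lem:mult-nsm}. No meaningful difference from the paper's argument.
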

This lemma is proved in \cref{sec:pf}. We now remark on the ``tightness" of \cref{lem:cat-sm}. On the one hand, it is tight in the sense that the pair of processes make the fullest use of \cref{ass:var} to be supermartingales, which we formalize in \cref{sec:pf} with \cref{prop:tightness-cat-nsm}; on the other hand, a slightly tighter (i.e.\ larger) pair of supermartingales do exist, but are not as useful in deriving CS (see \cref{sec:tighter}).
In conjunction with Ville's inequality (\cref{lem:vi}), \cref{lem:cat-sm} immediately gives a confidence sequence.

\begin{theorem}[Catoni-style confidence sequence]\label{thm:cs-cat}
Let $\{ \lambda_t \}_{t \in \mathbb N ^+}$ be any predictable process, and
let $\phi$ be a Catoni-type influence function. The following intervals $\{\CICT_t\}$ form a $(1-\alpha)$-confidence sequence for $\mu$:
\begin{equation}\label{eqn:catoni-cs}
    \CICT_t =  \bigg\{  m \in \mathbb R :  -  \frac{\sigma^2 \sum_{i=1}^t \lambda_i^2}{2} - {\log(2/\alpha)}  \le \sum_{i=1}^t  \phi(\lambda_i (X_i  - m ))  \le \frac{\sigma^2 \sum_{i=1}^t \lambda_i^2 }{2} + {\log(2/\alpha)} \bigg\}.
\end{equation}
\end{theorem}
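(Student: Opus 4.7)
The plan is to derive the confidence sequence as a direct corollary of Lemma 4 (Catoni supermartingales) combined with Ville's inequality and a union bound. The two nonnegative supermartingales $\Mcat_t$ and $\Ncat_t$, evaluated at the true mean $\mu$, both start at $1$ (empty product at $t=0$), so Ville's inequality applies to each at level $\alpha/2$.

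First I would apply Ville's inequality to $\{\Mcat_t\}$ to conclude
\[
\Pr\!\left[\exists\, t \in \mathbb{N}^+ : \Mcat_t \ge 2/\alpha\right] \le \alpha/2,
\]
and likewise to $\{\Ncat_t\}$. On the intersection of the two complementary events, which has probability at least $1-\alpha$ by a union bound, we have simultaneously for every $t$ that $\Mcat_t < 2/\alpha$ and $\Ncat_t < 2/\alpha$. Taking logarithms and recalling the definitions
\[
\log \Mcat_t = \sum_{i=1}^t \phi(\lambda_i(X_i - \mu)) - \frac{\sigma^2 \sum_{i=1}^t \lambda_i^2}{2}, \qquad \log \Ncat_t = -\sum_{i=1}^t \phi(\lambda_i(X_i - \mu)) - \frac{\sigma^2 \sum_{i=1}^t \lambda_i^2}{2},
\]
the bound $\Mcat_t < 2/\alpha$ yields $\sum_{i=1}^t \phi(\lambda_i(X_i - \mu)) \le \frac{\sigma^2 \sum \lambda_i^2}{2} + \log(2/\alpha)$, and the bound $\Ncat_t < 2/\alpha$ yields the symmetric lower bound $\sum_{i=1}^t \phi(\lambda_i(X_i - \mu)) \ge -\frac{\sigma^2 \sum \lambda_i^2}{2} - \log(2/\alpha)$.

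Together these two inequalities are exactly the membership condition defining $\CICT_t$ with $m = \mu$. Hence on the good event $\mu \in \CICT_t$ for every $t$, proving the $(1-\alpha)$-CS property. There is no real obstacle here beyond carefully tracking that both directions of the Catoni influence inequality $-\log(1 - x + x^2/2) \le \phi(x) \le \log(1 + x + x^2/2)$ are used, one for each of the two supermartingales in Lemma 4, and that the probabilistic budget $\alpha$ is split evenly between them. The only mildly delicate step is confirming that the set-valued definition in \eqref{eqn:catoni-cs} is well-posed (i.e. that the constraints are taken pointwise in $m$ and not in $\mu$); since the supermartingale guarantees hold only at the true mean, we do not claim $\CICT_t$ is an interval here, merely that $\mu$ belongs to it almost surely for all $t$ simultaneously.
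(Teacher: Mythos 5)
Your proof is correct and is exactly the argument the paper intends (the paper treats the theorem as immediate from Lemma~\ref{lem:cat-sm}, Ville's inequality, and a union bound, without writing out a separate proof). Your side remark about monotonicity is also consistent with the paper, which observes that $m \mapsto \sum_{i=1}^t \phi(\lambda_i(X_i - m))$ is monotone, so for positive $\lambda_i$ the set $\CICT_t$ is in fact an interval, though this is not needed for coverage.
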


Although this confidence sequence lacks a closed-form expression, it is easily computed using root-finding methods since the function $m \mapsto \sum_{i=1}^t  \phi(\lambda_i (X_i  - m ))$ is monotonic. A preliminary experiment is shown in \cref{fig:cat}.

We follow \citet[Proposition 2.4]{catoni2012challenging} in choosing the appropriate $\{\lambda_t\}$ sequence to optimize the interval widths,
\begin{equation}\label{eqn:catoni-opt-lambda}
    \lambda_t = \sqrt{ \frac{2\log(2/\alpha)}{t(\sigma^2 + \eta^2_t)} } \; \text{where } \eta_t = \sqrt{\frac{2\sigma^2 \log(2/\alpha)}{t - 2\log(2/\alpha)}}.
\end{equation}

\begin{wrapfigure}{r}{0.3\textwidth}
\centering
\includegraphics[width=0.3\textwidth]{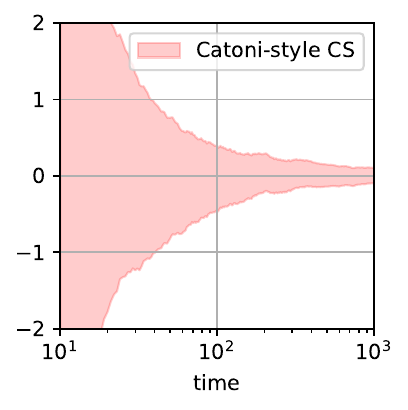}
\vspace{-30pt}
\caption{\small Catoni-style confidence sequence $\{ \CICT_t \}$ under $X_t \overset{\text{i.i.d.}}{\sim} \mathcal{N}({0,1})$, $\lambda_t = \min\{1/\sqrt{t} , 0.1 \}$, and $\alpha = 0.05$. Notice the difference in scale of the $y$-axis from Figure~\ref{fig:sncs}.} 
\label{fig:cat}
\vspace{-10pt}
\end{wrapfigure}

We shall show very soon in extensive experiments (\cref{sec:exp}) that the Catoni-style confidence sequence performs remarkably well controlling the width $|\CICT_t|$, not only outperforming the previously introduced two confidence sequences, but also matching the best-performing confidence sequences and even confidence intervals in the literature, many of which require a much more stringent distributional assumption. On the theoretical side, we establish the following nonasymptotic concentration result on the width $|\CICT_t|$.
\begin{theorem}\label{thm:catoni-tight} 
Suppose the coefficients $\{ \lambda_t \}_{t \in \mathbb N^+}$ are nonrandom and let $0 < \varepsilon < 1$. Suppose further that
\begin{equation}\label{eqn:quadratic-condition}
    \left(\sum_{i=1}^t \lambda_i \right)^2 - 2\left(\sum_{i=1}^t \lambda_i^2\right) \left(\sigma^2 \sum_{i=1}^t \lambda_i^2 + \log(2/\varepsilon ) + \log(2/\alpha) \right) \ge 0.
\end{equation}
Then, with probability at least $1-\varepsilon$,
\begin{equation}
    |\CICT_t|\le \frac{ 4(\sigma^2 \sum_{i=1}^t \lambda_i^2  + \log(2/\varepsilon) + \log(2/\alpha)) }{ \sum_{i=1}^t \lambda_i }.
\end{equation}
\end{theorem}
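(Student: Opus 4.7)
The strategy is to bound $|\CICT_t|$ via pointwise concentration of $F_t(m) := \sum_{i=1}^t \phi(\lambda_i(X_i - m))$ at two deterministic anchor points $\mu \pm y_-$, where $y_-$ is determined by the quadratic hypothesis. Set $A := \sum_{i=1}^t \lambda_i$, $C := \sum_{i=1}^t \lambda_i^2$, $R := \sigma^2 C/2 + \log(2/\alpha)$ (the CS threshold) and $R' := \sigma^2 C/2 + \log(2/\varepsilon)$ (the concentration budget). The hypothesis \eqref{eqn:quadratic-condition} is exactly $A^2 \ge 2C(R+R')$, which makes the quadratic $\tfrac{C}{2} y^2 - A y + (R+R') = 0$ have a nonnegative smaller root
\begin{equation*}
y_- = \frac{A - \sqrt{A^2 - 2C(R+R')}}{C}, \qquad \text{with } y_- \le \frac{2(R+R')}{A}
\end{equation*}
(the latter from Vieta's formulas: $y_- y_+ = 2(R+R')/C$ and $y_+ \ge A/C$). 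Since $\phi$ is monotone and hence so is $F_t$, the set $\CICT_t$ is a closed interval $[m_L, m_U]$ cut out by $F_t(m_L) = R$ and $F_t(m_U) = -R$. It therefore suffices to prove that with probability $\ge 1-\varepsilon$ one has both $m_U \le \mu + y_-$ and $m_L \ge \mu - y_-$, which yields $|\CICT_t| \le 2y_- \le 4(R+R')/A$, matching the target width.

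The core step is to shift the Catoni supermartingales of \cref{lem:cat-sm} from the true mean $\mu$ to an arbitrary fixed $m \in \mathbb{R}$. Since $\Exp[(X_i - m)^2 \mid \mathcal{F}_{i-1}] = \Exp[(X_i - \mu)^2 \mid \mathcal{F}_{i-1}] + (\mu - m)^2 \le \sigma^2 + (\mu - m)^2$, the defining inequality $e^{\phi(x)} \le 1 + x + x^2/2$ combined with $1 + y \le e^y$ yields $\Exp[e^{\phi(\lambda_i(X_i - m))} \mid \mathcal{F}_{i-1}] \le \exp(\lambda_i(\mu - m) + \tfrac{1}{2}\lambda_i^2(\sigma^2 + (\mu - m)^2))$, and analogously for $-\phi$ via $e^{-\phi(x)} \le 1 - x + x^2/2$. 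By \cref{lem:mult-nsm}, the two processes
\begin{equation*}
\prod_{i=1}^t \exp\!\left(\pm\phi(\lambda_i(X_i - m)) \mp \lambda_i(\mu - m) - \tfrac{1}{2}\lambda_i^2(\sigma^2 + (\mu-m)^2)\right)
\end{equation*}
are nonnegative supermartingales starting at $1$. Markov's inequality at level $\varepsilon/2$ applied to each, followed by a rearrangement, then gives that with probability at least $1 - \varepsilon/2$,
\begin{equation*}
F_t(m) \;\le\; -A(m - \mu) + \tfrac{C}{2}(m - \mu)^2 + R',
\end{equation*}
and analogously $F_t(m) \ge -A(m - \mu) - \tfrac{C}{2}(m - \mu)^2 - R'$.

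Specializing these envelopes at the deterministic points $m = \mu + y_-$ and $m = \mu - y_-$ respectively, and invoking the defining identity $A y_- - \tfrac{C}{2} y_-^2 = R + R'$, I obtain $F_t(\mu + y_-) \le -(R+R') + R' = -R$ and $F_t(\mu - y_-) \ge (R+R') - R' = R$ on a joint event of probability $\ge 1-\varepsilon$ by the union bound. Monotonicity of $F_t$ then forces $m_U \le \mu + y_-$ and $m_L \ge \mu - y_-$, completing the proof.

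The main obstacle I foresee is in the second step: the compensator in the shifted supermartingale must absorb both the nonzero conditional mean $\mu - m$ of $X_i - m$ and the inflated conditional second moment bound $\sigma^2 + (\mu - m)^2$, which is the precise reason the envelope becomes quadratic rather than linear in $m - \mu$. Crucially, the nonrandomness assumption on $\{\lambda_i\}$ ensures that $y_-$, and hence $\mu \pm y_-$, are deterministic (albeit unknown) constants, so pointwise Markov suffices; a predictable choice of $\{\lambda_i\}$ would instead require uniform-in-$m$ control, which the current supermartingale construction does not immediately provide.
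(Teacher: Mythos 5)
Your proof is correct and follows essentially the same approach as the paper's: shift the Catoni supermartingales to an arbitrary fixed center $m$, apply Markov's inequality at the two nonrandom anchor points determined by the quadratic condition, and use monotonicity of $\sum_i\phi(\lambda_i(X_i-m))$ in $m$ to trap the endpoints of $\CICT_t$. The only cosmetic difference is that you bound the smaller root $y_-$ via Vieta's formulas, whereas the paper uses $\sqrt{1-x}\ge 1-x$; both give the identical bound $y_- \le 2(R+R')/A$.
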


The proof of the theorem is inspired by the deviation analysis of the nonsequential Catoni estimator \citet[Proposition 2.4]{catoni2012challenging} itself, and can be found in \cref{sec:pf}.

We remark that \eqref{eqn:quadratic-condition} is an entirely deterministic inequality when $\{\lambda_i \}_{i \in \mathbb N^+}$ are all nonrandom. When $\lambda_t = \Otheta (t^{-1/2})$, which is the case for \eqref{eqn:catoni-opt-lambda}, the condition \eqref{eqn:quadratic-condition} holds for large $t$ since $\sum_{i=1}^t \lambda_i = \Otheta(\sqrt{t})$ while $\sum_{i=1}^t \lambda_i^2$ grows logarithmically. This gives us the following qualitative version of \cref{thm:catoni-tight}.

\begin{corollary}\label{cor:catoni-tight} Suppose $\lambda_t  = \Otheta (t^{-1/2})$ and is nonrandom, such as in~\eqref{eqn:catoni-opt-lambda}. Then if $t > \polylog(1/\varepsilon, 1/\alpha)$, with probability at least $1 - \varepsilon$,
\begin{equation}
     |\CICT_t|\le \Otilde( t^{-1/2}(\log(1/\varepsilon) + \log(1/\alpha))).
\end{equation}
Here, the notation $\Otilde$ only hides logarithmic factors in $t$.
\end{corollary}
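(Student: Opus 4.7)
The proof plan is to invoke Theorem~\ref{thm:catoni-tight} as a black box, and translate both its sufficient condition~\eqref{eqn:quadratic-condition} and its width bound into asymptotic language using the assumption $\lambda_t = \tilde{\Theta}(t^{-1/2})$. Concretely, this hypothesis means there exist polynomials $p_1,p_2$ in $\log t$ (with positive leading coefficient) such that
\begin{equation*}
    \frac{1}{\sqrt{t}\, p_1(\log t)} \;\le\; \lambda_t \;\le\; \frac{p_2(\log t)}{\sqrt{t}},
\end{equation*}
so that upon summing via integral comparison,
\begin{equation*}
    \sum_{i=1}^t \lambda_i \;\ge\; c_1\,\frac{\sqrt{t}}{\polylog(t)}, \qquad  \sum_{i=1}^t \lambda_i^2 \;\le\; c_2\, \polylog(t),
\end{equation*}
for absolute constants $c_1,c_2>0$ (the $\lambda_i^2$ sum is at most $\sum_{i=1}^t p_2(\log i)^2/i = O(\log t \cdot \polylog(t)) = \polylog(t)$).

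Next I would check the quadratic condition~\eqref{eqn:quadratic-condition}. Substituting the two estimates above, condition~\eqref{eqn:quadratic-condition} is implied by
\begin{equation*}
    \frac{c_1^2\, t}{\polylog(t)} \;\ge\; 2 c_2\, \polylog(t) \cdot \bigl( \sigma^2 c_2 \polylog(t) + \log(2/\varepsilon) + \log(2/\alpha) \bigr),
\end{equation*}
i.e. $t \ge q(\log t)\cdot\bigl(\log(1/\varepsilon) + \log(1/\alpha) + 1\bigr)$ for a fixed polynomial $q$ absorbing $\sigma^2$ and the $c_i$. For any fixed $\varepsilon,\alpha$, this holds whenever $t$ exceeds a suitable $\polylog(1/\varepsilon, 1/\alpha)$ threshold (solving $t/\polylog(t) \ge \mathrm{const}\cdot(\log(1/\varepsilon)+\log(1/\alpha))$; the $\polylog(t)$ on the LHS is harmless since $t$ grows strictly faster than any $\polylog$).

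With the hypothesis of Theorem~\ref{thm:catoni-tight} verified, I would plug the sum estimates directly into its width bound:
\begin{equation*}
    |\CICT_t| \;\le\; \frac{4\bigl(\sigma^2 \cdot c_2\polylog(t) + \log(2/\varepsilon) + \log(2/\alpha)\bigr)}{c_1\sqrt{t}/\polylog(t)} \;=\; \frac{\polylog(t)\cdot\bigl(\log(1/\varepsilon) + \log(1/\alpha) + \mathrm{const}\bigr)}{\sqrt{t}},
\end{equation*}
which is exactly $\tilde{\mathcal{O}}\bigl(t^{-1/2}(\log(1/\varepsilon) + \log(1/\alpha))\bigr)$ in the notation of the paper.

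The only genuinely delicate step is the second one: one has to be careful that the $\polylog(t)$ factors arising from the non-sharpness of $\tilde{\Theta}$ in the lower bound on $\sum\lambda_i$ do not interact badly with the squared sum or with the $\log(1/\varepsilon),\log(1/\alpha)$ terms. Since all polylog factors in $t$ are ultimately dominated by any positive power of $t$, and only multiply (rather than add to) $\log(1/\varepsilon)+\log(1/\alpha)$, this is a routine bookkeeping exercise rather than a substantive obstacle, and the corollary follows.
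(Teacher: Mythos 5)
Your proposal is correct and follows the same route the paper takes: it invokes Theorem~\ref{thm:catoni-tight}, establishes $\sum_{i=1}^t\lambda_i = \tilde\Theta(\sqrt t)$ and $\sum_{i=1}^t\lambda_i^2 = \polylog(t)$ from the hypothesis $\lambda_t=\tilde\Theta(t^{-1/2})$, checks that condition~\eqref{eqn:quadratic-condition} then holds once $t>\polylog(1/\varepsilon,1/\alpha)$, and substitutes into the width bound. This is exactly the argument the paper gives (in informal remark form) immediately before the corollary statement; your version simply spells out the bookkeeping, including correctly noting that the extra $\polylog(t)$ from $\sigma^2\sum\lambda_i^2$ in the numerator can be absorbed into the $\Otilde$ because it multiplies rather than adds to $\log(1/\varepsilon)+\log(1/\alpha)$.
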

We hence see that the Catoni-style confidence sequence enjoys the $\Otilde(t^{-1/2})$ and $\mathcal{O}(\log(1/\alpha))$ near-optimal rates of shrinkage and growth. If we do not ignore the logarithmic factors in $t$, for example, taking $\lambda_t \asymp 1 / \sqrt{t\log t}$, the rate shall be $|\CICT_t| = \mathcal{O} (\log \log t \sqrt{\log t / t})$ (cf.\ \citet[Table 1]{waudby2020estimating}).

It is now natural to ask whether the Catoni-style CS can obtain the law-of-the-iterated-logarithm rate $\Theta(\sqrt{\log \log t/t})$. This cannot be achieved by tuning the sequence $\{ \lambda_t \}$ alone \citep[Table 1]{waudby2020estimating}, but can be achieved using a technique called \emph{stitching} \citep{howard2021time}.

\begin{corollary}\label{cor:loglogt} Let $\{ {\CICT_t}(\Lambda, \alpha)  \}$ denote the Catoni-style confidence sequence as in \eqref{eqn:catoni-cs} with $\sigma^2 = 1$, \emph{constant} $\{\lambda_t\}$ sequence $\lambda_1 = \lambda_2 = \dots = \Lambda$, and error level $\alpha$. Then, let $t_j = \e^j$, $\alpha_j = \frac{\alpha}{(j+2)^2}$, and $\Lambda_j = \sqrt{ \log(2/\alpha_j)  \e^{- j} }$. The following \emph{stitched Catoni-style confidence sequence}
\begin{equation}
   \CIstitch_t = {\CICT_t}(\Lambda_j, \alpha_j), \quad \text{for } t_j \le t < t_{j+1} 
\end{equation}
is a $(1-\alpha)$-CS for $\mu$ and satisfies
\begin{equation}\label{eqn:stitchresult}
%\forall t > \polylog(1/\alpha),  
%\forall \varepsilon \ge \frac{\alpha}{\sum_{m=1}^\infty 1/m^{1.4}},
%\\
\Pr \left[    |\CIstitch_t| \le 8(\e + 1) \sqrt{ \frac{\log(2/\alpha) + 2\log\log\e^2 t}{t} }  \right] \ge 1 -\alpha/4
\end{equation}
when $t > \polylog(1/\alpha)$.
\end{corollary}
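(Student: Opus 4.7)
The argument is a standard stitching construction in the style of \citet{howard2021time}, applied to the Catoni-style CS of Theorem~\ref{thm:cs-cat}. The plan is to partition the time axis into geometric epochs $[t_j, t_{j+1}) = [2^j, 2^{j+1})$ and, on each epoch, use the fresh Catoni CS $\CICT_t(\Lambda_j,\alpha_j)$ with constant $\lambda$-sequence $\Lambda_j$ tuned to that epoch. Writing $\zeta(s) := \sum_{m\ge 1} m^{-s}$, the choice $\alpha_j = \alpha/((j+1)^{1.4}\zeta(1.4))$ satisfies $\sum_j \alpha_j = \alpha$, so a union bound over epochs already makes $\{\mathrm{CI}^{\mathsf{Cstch}}_t\}$ a valid $(1-\alpha)$-CS; the width claim \eqref{eqn:stitchresult} will be proved by a \emph{separate} union bound over epoch-level width-failure events with budgets $\varepsilon_j$ summing to $\alpha/\zeta(1.4)$.

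On epoch $j$ I apply Theorem~\ref{thm:catoni-tight} with $\sigma^2=1$, $\lambda_i \equiv \Lambda_j$, CS level $\alpha_j$, and width-failure level $\varepsilon_j := \alpha_j/\zeta(1.4)$ (so $\sum_j \varepsilon_j = \alpha/\zeta(1.4)$). With a constant $\lambda$-sequence the quadratic condition \eqref{eqn:quadratic-condition} simplifies to $t(1 - 2\Lambda_j^2) \ge 2(\log(2/\alpha_j) + \log(2/\varepsilon_j))$, which given $\Lambda_j^2 = \sqrt{2}\log(2/\alpha_j)/t_j$ holds for all $t \ge t_j$ once $t_j \gtrsim \polylog(1/\alpha)$ --- exactly what the hypothesis $t > \polylog(1/\alpha)$ absorbs. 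Theorem~\ref{thm:catoni-tight} then yields
\begin{equation*}
    |\CICT_t(\Lambda_j,\alpha_j)| \le \frac{4\bigl(t\Lambda_j^2 + \log(2/\varepsilon_j) + \log(2/\alpha_j)\bigr)}{t\Lambda_j}.
\end{equation*}
Using $t_j \le t < 2t_j$, each summand in the numerator is $O(\log(2/\alpha_j))$ while the denominator is $\ge 2^{-1/4}\sqrt{t\log(2/\alpha_j)}$, which collapses the bound to $C\sqrt{\log(2/\alpha_j)/t}$ for an explicit numerical $C$ (the additive $\log\zeta(1.4)$ gap between $\log(2/\varepsilon_j)$ and $\log(2/\alpha_j)$ being absorbed into $C$).

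The final step is to re-express $\log(2/\alpha_j)$ in terms of $\log\log 2t$. Since $t \in [2^j, 2^{j+1})$ forces $j+1 \le \log_2(2t)$,
\begin{equation*}
    \log(2/\alpha_j) = 1.4\log(j+1) + \log(2\zeta(1.4)/\alpha) \le 1.4\bigl(\log\log(2t) + 0.72\log(10.4/\alpha)\bigr),
\end{equation*}
where $0.72 \approx 1/1.4$ and the constant $10.4$ absorbs $2\zeta(1.4)\cdot(1/\log 2)^{1.4}$ along with the small additive slack used to round. Plugging this into the collapsed width bound, aggregating constants so that $\sqrt{1.4}\cdot C$ rounds up to $6.8$, and taking a union bound over $j$ yields the prefactor in \eqref{eqn:stitchresult} together with the stated probability $\ge 1-\alpha/\zeta(1.4)$, and $\zeta(1.4) > 3$ gives the final $>1-\alpha/3$.

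The only real difficulty is constant bookkeeping. The stitching skeleton (geometric epochs, polynomial $(j+1)^{-1.4}$ budgets, $j \leftrightarrow \log_2 t$ translation) is routine; what one has to execute carefully is the simultaneous choice of $\Lambda_j$ and $\varepsilon_j$ so that (i) $\sum_j \varepsilon_j = \alpha/\zeta(1.4)$, (ii) $\log(2/\varepsilon_j)$ remains comparable to $\log(2/\alpha_j)$, and (iii) the quadratic condition \eqref{eqn:quadratic-condition} holds uniformly on each epoch --- the last being precisely what forces the hypothesis $t > \polylog(1/\alpha)$.
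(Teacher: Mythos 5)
Your stitching skeleton (geometric epochs, $\alpha_j$-budget, $j \leftrightarrow \log_2 t$ translation, checking the quadratic condition via $t > \polylog(1/\alpha)$) matches the paper's, but your choice of the width-failure level $\varepsilon$ differs from the paper's in a way that matters for the stated constant. The paper applies \cref{thm:catoni-tight} once at the fixed $t$ with a \emph{single} level $\varepsilon = \alpha_0 = \alpha/\sum_m m^{-1.4}$, not an epoch-varying $\varepsilon_j = \alpha_j/\zeta(1.4)$. The point of taking $\varepsilon = \alpha_0$ is that $\alpha_0 \ge \alpha_j$ for every $j \ge 0$, so $\log(2/\varepsilon) \le \log(2/\alpha_j)$ and the \cref{thm:catoni-tight} bound collapses exactly to $4\bigl(t\Lambda_j^2/2 + \log(2/\alpha_j)\bigr)/(t\Lambda_j)$, which is precisely $4$ times the linear-boundary expression already bounded by $1.7\sqrt{(\log\log 2t + 0.72\log(10.4/\alpha))/t}$ in \cref{prop:stitch}; hence $4 \cdot 1.7 = 6.8$. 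Your $\varepsilon_j = \alpha_j/\zeta(1.4) < \alpha_j$ goes the \emph{wrong} way: $\log(2/\varepsilon_j) = \log(2/\alpha_j) + \log\zeta(1.4) > \log(2/\alpha_j)$, leaving a residual $\log\zeta(1.4) \approx 1.13$ in the numerator that neither cancels nor is dominated in a way that preserves the factor $6.8$; your remark that it is ``absorbed into $C$'' and that ``$\sqrt{1.4}\cdot C$ rounds up to $6.8$'' is not a calculation that lands on the stated constant. You also never invoke \cref{prop:stitch}, which is the paper's source of the $1.7$ and the $0.72\log(10.4/\alpha)$ form; re-deriving these from scratch is possible but requires the exact bookkeeping that \cref{prop:stitch} already provides.

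Secondly, the ``separate union bound over epoch-level width-failure events'' is both unnecessary and, as posed, ill-formed. The claim \eqref{eqn:stitchresult} is \emph{pointwise} in $t$ (for each fixed $t > \polylog(1/\alpha)$), so a single application of \cref{thm:catoni-tight} suffices, giving probability $1-\alpha_0 = 1 - \alpha/\sum_m m^{-1.4}$ directly. If you instead want uniformity over $t$, a union over epochs alone does not deliver it: \cref{thm:catoni-tight} is a fixed-$t$ statement, and epoch $j$ contains $\Theta(2^j)$ distinct times with distinct width-failure events, so you cannot charge one $\varepsilon_j$ for the whole epoch. (Minor: your lower bound on $t\Lambda_j$ should read $2^{1/4}\sqrt{t\log(2/\alpha_j)}$, not $2^{-1/4}$.)
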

$\{ \CIstitch_t  \}$ forms a $(1-\alpha)$-CS because of a union bound over $\sum_{j \ge 0} \alpha_j \le \alpha$. The width in \eqref{eqn:stitchresult} matches both the $\Theta(\sqrt{\log(1/\alpha)})$ lower bound on the growth rate, and the $\Theta(\sqrt{\log \log t/t})$ lower bound on the shrinkage rate (which we shall present soon in \cref{sec:lb}).
It pays the price of a larger multiplicative constant to achieve the optimal shrinkage rate, so we only recommend it when long-term tightness is of particular interest.
The proof of this corollary is in \cref{sec:pf}. 

\begin{remark}\normalfont
% \footnote{We thank an anonymous reviewer for raising this point.}
While the union bound argument of \cref{cor:loglogt} asymptotically \emph{improves} an existing $t^{-1/2} \polylog(t)$ CS to a $\sqrt{\log \log t/t}$ CS, there is a related (but much less involved) idea to \emph{construct} a CS from a sequence of CIs: split $\alpha = \sum_{t=1}^\infty \alpha_t$ and define the CS as the CI at time $t$ with error level $\alpha_t$. This, however, leads to poor performance. Because the events $\{\mu \in \mathrm{CI}_t\}$ and $\{\mu \in \mathrm{CI}_{t+1}\}$ are highly dependent, making the union bound over $t \in \mathbb N^+$ very loose. 
In \cref{fig:cs}, we visually demonstrate the Catoni CIs with $\alpha_t = \alpha/[t(t + 1)]$ (forming what we call the ``trivial Catoni CS''), versus our supermartingale-based Catoni-style CS \eqref{eqn:catoni-cs}.
\end{remark}
\begin{figure}[h]
  \begin{minipage}[c]{0.7\textwidth}
    \includegraphics[width=\textwidth]{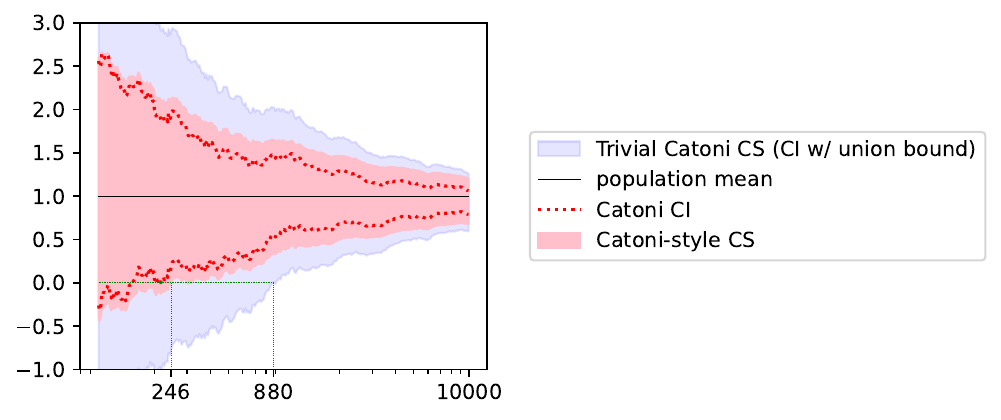}
  \end{minipage}\hfill
  \begin{minipage}[c]{0.27\textwidth}
    \caption{\small 
    To achieve the same level of lower tightness (e.g.\ when lower confidence bound surpasses 0), the trivial Catoni CS needs a sample of size 880, about 4 times the Catoni-style CS which only takes 246.
    } \label{fig:cs}
  \end{minipage}
\end{figure}

\section{Lower bounds}\label{sec:lb}

For the sake of completeness, we now discuss the lower bounds of confidence sequences for mean estimation. We first introduce the following notion of \emph{tail symmetry}, which is a standard practice when constructing two-sided confidence intervals and sequences.

\begin{definition}[tail-symmetric CI/CS]Let $\mathcal{Q}$ be a family of distributions over $\mathbb R^{\mathbb N^+}$ such that for each $Q \in \mathcal{Q}$, $\Exp_Q[X_t] = \mu_Q$ is independent of $t$.
A sequence of data-dependent intervals $\{ [\lw_t, \up_t] \}_{t \in \mathbb N^+}$ where $\lw_t, \up_t: \mathbb R^t \to \mathbb R$ is called a sequence of tail-symmetric $(1-\alpha)$-CI over $\mathcal{Q}$ if for any $t \in \mathbb N^+$,
\begin{equation}
   \forall Q \in \mathcal{Q},  \Prw_{\{X_i\} \sim Q} \left[ \mu_Q \le \lw_t(X_1, \dots, X_t)  \right] \le \frac{\alpha}{2} ,  \Prw_{\{X_i\} \sim Q} \left[ \mu_Q \ge \up_t(X_1, \dots, X_t)  \right] \le \frac{\alpha}{2}.
\end{equation}
Further, it is called a tail-symmetric $(1-\alpha)$-CS over $\mathcal{P}$ if
\begin{multline}
   \forall Q \in \mathcal{Q}, \quad  \Prw_{\{X_i\} \sim Q}  \left[ \forall t \in \mathbb N^+, \; \mu_Q \le \lw_t(X_1, \dots, X_t)  \right] \le \frac{\alpha}{2}, \\  \Prw_{\{X_i\} \sim Q} \left[ \forall t \in \mathbb N^+ ,  \; \mu_Q \ge \up_t(X_1, \dots, X_t)  \right] \le \frac{\alpha}{2}.
\end{multline}
\end{definition}

The following lower bound of minimax nature, akin to \citet[Section 6.1]{catoni2012challenging}), characterizes the minimal growth rate of confidence intervals (hence also confidence sequences) when $\alpha$ is small. Its proof shall be found in \cref{sec:pf}.

\begin{proposition}[Gaussian lower bound]\label{prop:gaussianlowerbound} 
We define
\begin{equation}
    \mathcal{M}_t(\alpha, \varepsilon) = \inf_{[  \lw_t, \up_t ]} \sup_{Q\in \mathcal{Q}_{\sigma^2}} w^{Q, \varepsilon}_t,
\end{equation}
where the supremum is taken over $\mathcal{Q}_{\sigma^2}$, the set of all distributions of $\{X_t\}$ satisfying \cref{ass:mean} (where $\mu$ ranges over $\mathbb R$) and \cref{ass:var} (where $\sigma^2$ is fixed), the infimum over all tail-symmetric $(1-\alpha)$-confidence intervals $\{[  \lw_t, \up_t ]\}$ over $\mathcal{Q}_{\sigma^2}$, and $w^{Q, \varepsilon}_t$ is the number such that $\Prw_{\{X_i\} \sim Q}[ \up_t - \lw_t \le w^{Q, \varepsilon}_t ] = 1-\varepsilon$.

Let $\Phi^{-1}$ be the quantile function of the standard normal distribution. Then, as long as $\alpha + 2\varepsilon < 1$,
\begin{multline}
     \mathcal{M}_t(\alpha, \varepsilon) \ge \frac{2\sigma}{\sqrt{t}}\Phi^{-1}(1-\alpha/2 - \varepsilon) 
     \\
     = \frac{2\sigma}{\sqrt{t}} \left(\sqrt{\log\frac{4}{(\alpha+2\varepsilon)^2} - \log \log\frac{4}{(\alpha+2\varepsilon)^2} - \log(2\pi) } + \mathcal{O}(1) \right).
\end{multline}
Here $\mathcal{O}(1)$ is with respect to $\alpha, \varepsilon \to 0$.
\end{proposition}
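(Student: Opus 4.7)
The plan is to restrict to the Gaussian sub-family of $\mathcal{Q}_{\sigma^2}$ and run a two-point (Le Cam-style) argument based on total variation. Since every product Gaussian $\mathcal{N}(\mu,\sigma^2)^{\otimes t}$ lies in $\mathcal{Q}_{\sigma^2}$, it suffices to lower bound the worst-case width inside this parametric subclass. Accordingly, fix any tail-symmetric $(1-\alpha)$-CI $\{[\lw_t,\up_t]\}$, let $P_\mu=\mathcal N(\mu,\sigma^2)^{\otimes t}$, and let $w := w^{P_0,\varepsilon}_t$ be the $(1-\varepsilon)$-quantile of $\up_t-\lw_t$ under $P_0$. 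The goal is to show $w \ge (2\sigma/\sqrt{t})\,\Phi^{-1}(1-\alpha/2-\varepsilon)$ and then extract the claimed asymptotic shape.

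Second, I would set up the two-point comparison between $P_0$ and $P_{-w}$. Under $P_0$, tail-symmetry gives $\Pr_0[\up_t > 0] \ge 1-\alpha/2$, while the quantile definition gives $\Pr_0[\up_t - \lw_t \le w] \ge 1 - \varepsilon$; intersecting yields $\Pr_0[\lw_t > -w] \ge 1-\alpha/2-\varepsilon$. Under $P_{-w}$ (whose mean is exactly $-w$), tail-symmetry gives $\Pr_{-w}[\lw_t \ge -w] \le \alpha/2$, and continuity of the Gaussian law in the endpoint lets me freely replace $\ge$ by $>$. Applying the elementary bound $\Pr_0[A]-\Pr_{-w}[A] \le \mathrm{TV}(P_0,P_{-w})$ to $A=\{\lw_t>-w\}$ gives $1-\alpha-\varepsilon \le \mathrm{TV}(P_0,P_{-w})$.

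Third, I would compute the TV distance. Since $\bar X_t$ is sufficient for the one-parameter Gaussian location family (the likelihood ratio $\mathrm{d}P_{-w}/\mathrm{d}P_0$ depends on the data only through $\bar X_t$), the TV between the product measures equals the TV between $\mathcal N(0,\sigma^2/t)$ and $\mathcal N(-w,\sigma^2/t)$, which is $2\Phi\!\left(\tfrac{w\sqrt{t}}{2\sigma}\right)-1$. Plugging in and solving gives
\begin{equation}
w \ge \frac{2\sigma}{\sqrt{t}}\,\Phi^{-1}\!\left(1-\tfrac{\alpha+\varepsilon}{2}\right) \ge \frac{2\sigma}{\sqrt{t}}\,\Phi^{-1}(1-\alpha/2-\varepsilon),
\end{equation}
since $\Phi^{-1}$ is increasing and $(\alpha+\varepsilon)/2 \le \alpha/2+\varepsilon$. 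Note the nontriviality condition $\alpha+2\varepsilon<1$ keeps the argument of $\Phi^{-1}$ in $(1/2,1)$.

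Finally, I would derive the closed-form asymptotics via Mills' ratio. Setting $p=(\alpha+2\varepsilon)/2$ and $x=\Phi^{-1}(1-p)$, the relation $1-\Phi(x)\sim \phi(x)/x$ as $x\to\infty$ gives $\log(1/p)=x^2/2+\log x+\tfrac12\log(2\pi)+o(1)$, and iterating $x\sim\sqrt{2\log(1/p)}$ inside the $\log x$ term yields $x^2=\log(1/p^2)-\log\log(1/p^2)-\log(2\pi)+o(1)$. Taking square roots and absorbing the lower-order correction into $\mathcal O(1)$ produces the formula stated in the proposition. The main obstacle, and the part I expect to require the most care, is the two-point step: ensuring the strict/non-strict endpoint swap is handled via the absolute continuity of Gaussians, and invoking sufficiency to turn the $t$-fold product TV into a univariate Gaussian TV rather than settling for a looser Pinsker-type KL bound (which would be off by a constant).
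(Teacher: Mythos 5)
Your proof is correct, and it takes a genuinely different route from the paper's. The paper also restricts to the Gaussian subfamily, but it then invokes \citet[Proposition~6.1]{catoni2012challenging}, a minimax-optimality result stating that for any estimator $\htheta_t$ and threshold $r_t$ there exists a Gaussian mean $\mu_0$ under which one of the two deviation probabilities $\Pr[\htheta_t \gtrless \mu_0 \pm r_t]$ is at least that of the sample mean $\hmu_t$; combining that with the union bound $\Pr[\htheta_t \le \mu_Q - r_t] \le \alpha/2 + \varepsilon$ (where $\htheta_t$ is the CI midpoint and $r_t$ the high-probability half-width) directly gives $r_t \ge (\sigma/\sqrt{t})\Phi^{-1}(1-\alpha/2-\varepsilon)$. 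You instead run a Le Cam two-point argument: you manufacture an event $A=\{\lw_t > -w\}$ with probability at least $1-\alpha/2-\varepsilon$ under $P_0$ and at most $\alpha/2$ under $P_{-w}$, deduce $\mathrm{TV}(P_0,P_{-w}) \ge 1-\alpha-\varepsilon$, and use sufficiency of $\bar X_t$ to compute the TV exactly as $2\Phi(w\sqrt{t}/(2\sigma))-1$. That is self-contained (no appeal to Catoni's optimality lemma), and it actually yields the marginally stronger constant $\Phi^{-1}\bigl(1-(\alpha+\varepsilon)/2\bigr) \ge \Phi^{-1}(1-\alpha/2-\varepsilon)$, which of course still matches the stated asymptotics. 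Your Mills-ratio expansion to recover the $\sqrt{\log(4/(\alpha+2\varepsilon)^2)-\log\log(4/(\alpha+2\varepsilon)^2)-\log(2\pi)}$ form is the same calculation the paper implicitly relies on. Two minor notes: the caveat about strict versus non-strict endpoint inequalities is unnecessary, since $\Pr_{-w}[\lw_t>-w]\le\Pr_{-w}[\lw_t\ge -w]\le\alpha/2$ already does the job without any continuity argument; and you are right that computing the Gaussian-to-Gaussian TV exactly (via sufficiency) rather than bounding it by Pinsker/KL is what recovers the exact quantile constant instead of a lossy one.
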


The next lower bound, due to the law of the iterated logarithm (LIL), lower bounds the shrinkage rate of confidence sequences as $t \to \infty$. The proof is again delayed to \cref{sec:pf}.

% Suppose $\hmu_t(x_1', \dots, x_t') < \lw_t(x_1',\dots, x_t')$ for some $x_1', \dots, x_t'$. Let us construct a distribution such that $[\lw_t, \infty)$ is not a $(1-\alpha)$-CI. Put overwhelming high mass on $(x_1',\dots, x_t')$

\begin{proposition}[LIL lower bound]\label{prop:lil} Let $P$ be a distribution on $\mathbb R$ with mean $\mu$ and variance $\sigma^2$. Let $\{ \CI_t \}$ be a $(1-\alpha)$-confidence sequence for $X_t \overset{\mathrm{iid}}{\sim} P$ such that $\hmu_t \in \CI_t$. Then,
\begin{equation}
\Pr \left[\limsup_{t \to \infty} \frac{|\CI_t|}{\sigma}\sqrt{\frac{t}{2\log\log t}} \ge 1 \right] \ge 1- \alpha.
\end{equation}
\end{proposition}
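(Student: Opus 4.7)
The plan is to reduce the claim to the classical Hartman--Wintner law of the iterated logarithm by using the fact that both the true mean $\mu$ and the empirical mean $\hmu_t$ lie in $\CI_t$ (the former on the good coverage event, the latter by hypothesis), so that $|\CI_t|$ dominates the empirical deviation $|\hmu_t - \mu|$.

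First, I would define the \emph{coverage event}
\[
\mathcal{E} = \{\mu \in \CI_t \text{ for all } t \in \mathbb N^+\},
\]
which by the CS guarantee satisfies $\Pr[\mathcal E] \ge 1 - \alpha$. On $\mathcal{E}$, since $\CI_t$ is an interval containing both $\mu$ and the empirical mean $\hmu_t$, one has the pointwise bound
\[
|\CI_t| \ge |\hmu_t - \mu| \quad \text{for every } t \in \mathbb N^+.
\]
Multiplying through by $\sigma^{-1}\sqrt{t/(2\log\log t)}$ and taking $\limsup$ preserves the inequality, so on $\mathcal{E}$,
\[
\limsup_{t\to\infty} \frac{|\CI_t|}{\sigma}\sqrt{\frac{t}{2\log\log t}} \ge \limsup_{t\to\infty} \frac{|\hmu_t - \mu|}{\sigma}\sqrt{\frac{t}{2\log\log t}}.
\]

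Next I would invoke the Hartman--Wintner LIL: for an i.i.d.\ sequence $X_1, X_2, \dots$ with mean $\mu$ and variance $\sigma^2 \in (0,\infty)$, almost surely
\[
\limsup_{t\to\infty} \frac{\sum_{i=1}^t (X_i - \mu)}{\sqrt{2\sigma^2 t \log\log t}} = 1,
\]
which upon dividing by $t$ inside the numerator gives
\[
\limsup_{t\to\infty} \frac{|\hmu_t - \mu|}{\sigma}\sqrt{\frac{t}{2\log\log t}} \ge 1 \quad \text{a.s.}
\]
Call this full-measure event $\mathcal{L}$.

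Finally, intersecting the two events, $\Pr[\mathcal{E} \cap \mathcal{L}] \ge \Pr[\mathcal{E}] \ge 1-\alpha$ since $\Pr[\mathcal L] = 1$; on $\mathcal{E} \cap \mathcal{L}$ the displayed inequality in the statement holds, yielding the proposition. The argument is essentially one-line after the LIL is cited, and there is no real obstacle beyond making sure the trivial containment $\{\mu, \hmu_t\} \subseteq \CI_t$ is harnessed before invoking LIL; no degenerate-$\sigma$ edge case arises because the statement presupposes that $\sigma^2$ is the (finite, positive) variance of $P$ and normalizes by $\sigma$.
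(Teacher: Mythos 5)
Your proof is correct and follows essentially the same route as the paper: invoke the Hartman--Wintner LIL for $\hmu_t$, note that on the $(1-\alpha)$-coverage event both $\mu$ and $\hmu_t$ lie in $\CI_t$ so $|\CI_t|\ge|\hmu_t-\mu|$, and intersect with the full-measure LIL event. Your write-up is merely a bit more explicit about naming the events; the substance is identical.
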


\begin{remark}\normalfont
The assumption $\hmu_t \in \CI_t$ is true for many existing confidence sequences for mean estimation in the literature \cite{darling1967confidence,jennison1989interim,howard2021time}, meaning that our CS that matches this lower bound \eqref{eqn:stitchresult} is fundamentally not worse than them even under the much weaker \cref{ass:var}. While assuming $\hmu_t \in \CI_t$ does not encompass all the CSs in the literature, it can be relaxed by assuming instead that \emph{there exists an estimator $\htheta_t \in \CI_t$ that follows the law of the iterated logarithm}. For example, it is known that the weighted empirical average satisfies the LIL \cite{teicher1974law}, which implies that all of the predictable mixture confidence sequences due to \citet{waudby2020estimating}, as well as our \cref{thm:cids}, are subject to a similar LIL lower bound. Relatedly, the LIL is also satisfied by some M-estimators \cite{he1995law,brunel2019nonasymptotic}. However, these LIL-type results are only valid under constant weight multipliers (in our parlance, that is the sequence $\{\lambda_t \}$ is constant, in which case our CSs do not shrink), and hence the M-estimators for $\mu$ can be inconsistent, the limit of the LIL-type convergence being some value other than $\mu$. The search for new LIL-type results for \emph{consistent} M-estimators under decreasing $\{ \lambda_t \}$ sequence, e.g.\ the zero of $\sum_{i=1}^t  \phi(\lambda_i (X_i  - m ))$ which is included in the Catoni-style CS \eqref{eqn:catoni-cs}, shall stimulate our future study.
\end{remark}

\section{Experiments}\label{sec:exp}
We first examine the empirical cumulative miscoverage rates of our confidence sequences as well as Catoni's confidence interval. These are the frequencies at time $t$ that any of the intervals $\{ \CI_i \}_{1 \le i \le t}$ does not cover the population mean $\mu$, under 2000 (for all CSs) or 250 (for the Catoni CI\footnote{due to its inherent non-sequentializable computation}) independent runs of i.i.d.\ samples of size $t =800$ from a Student's t-distribution with 3 degrees of freedom, randomly centered and rescaled to variance $\sigma^2 = 25$. Its result, in \cref{fig:miscoverage}, shows the clear advantage of CSs under continuous monitoring as they never accumulate error more than the preset $\alpha$, unlike the Catoni CI whose cumulative miscoverage rate goes beyond $\alpha$ early on. In fact, a similar experiment in the light-tailed regime by \citet[Figure 1(b)]{howard2021time} shows that the cumulative miscoverage rate of CIs will grow to 1 if we extend the sequential monitoring process indefinitely.

\begin{figure}[h]
  \begin{minipage}[c]{0.63\textwidth}
    \includegraphics[width=\textwidth]{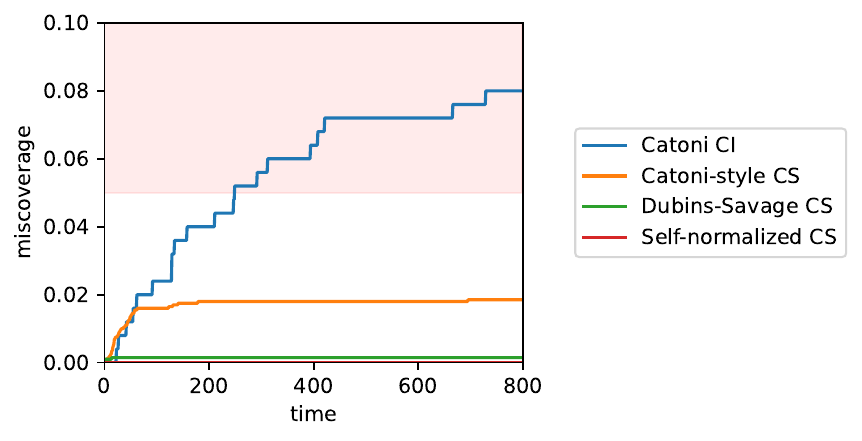}
  \end{minipage}\hfill
  \begin{minipage}[c]{0.33\textwidth}
    \caption{\small 
    Cumulative miscoverage rates when continuously monitoring CSs and CI under $\mathrm{t}_3$ distribution, which (provably) grow without bound for the Catoni CI, but are guaranteed to stay within $\alpha = 0.05$ for CSs.
    } \label{fig:miscoverage}
  \end{minipage}
\end{figure}

\begin{figure}[!h]
    \centering
    \subfloat[Growths under $\mathrm{t_3}$ distribution.]{{\includegraphics[height=0.32\columnwidth]{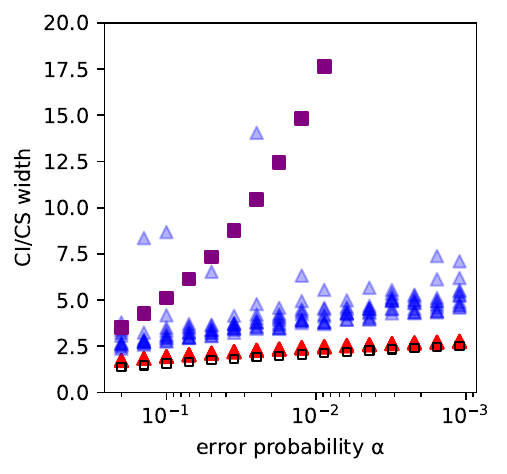} } \label{fig:width-ht}}%
    %\quad
    \subfloat[Growths under normal distribution.]{{\includegraphics[height=0.32\columnwidth]{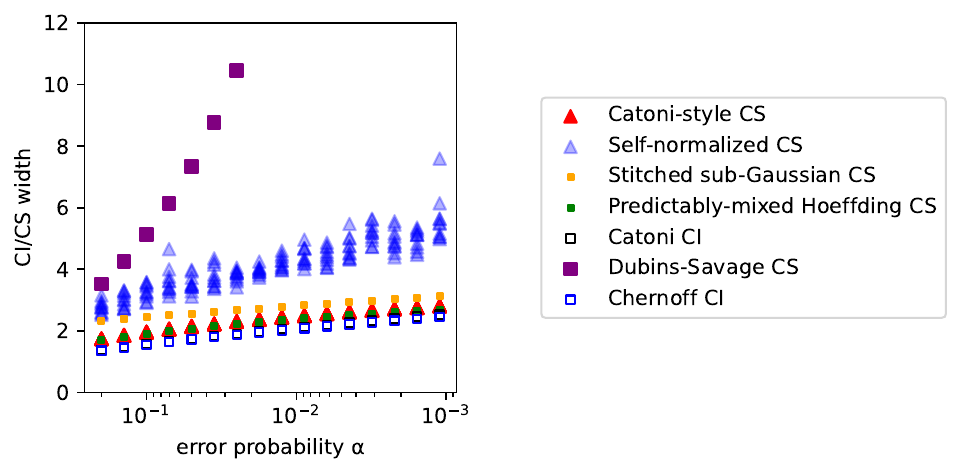} }  \label{fig:width-gaus} }%
    \caption{\small Comparison of CI/CS growth rates at $t =250$. In both figures, triangular markers denote random widths (for which we repeat 10 times to let randomness manifest), and square markers deterministic widths; hollow markers denote the widths of CIs, while filled markers the widths of CSs. Our Catoni-style CS is among the best CSs (even CIs) in terms of tightness under small error probability $\alpha$, in heavy and light tail regimes alike. In the right figure, note the overlap of the Chernoff-CI with Catoni-CI, as well as that of the Hoeffding-type subGaussian CS with the Catoni-style CS.}%
\end{figure}

We then compare the confidence sequences in terms of their growth rates. That is, we shall take decaying values of error level $\alpha$ and plot the length of the CSs, with the corresponding $\{\lambda_t\}$ sequences \eqref{eqn:cids-heuristic-choice}, \eqref{eqn:sncs-heuristic}, and \eqref{eqn:catoni-opt-lambda}\footnote{Actually we use $\{ \lambda_{\max\{t, 9\}} \}$ for the Catoni-style CS to facilitate the root-finding in \eqref{eqn:catoni-cs}.} we choose. We draw a $t = 250$ i.i.d.\ sample from the same Student's t-distribution as above ($\sigma^2 =25$). The Dubins-Savage CS has deterministic interval widths, while the self-normalized (we only consider $|\MSN_t|$) and Catoni-style CS both have random interval widths, for which we repeat the experiments 10 times each. We add the Catoni CI for the sake of reference. The comparison of widths is exhibited in \cref{fig:width-ht}.

We observe from the graph that the self-normalized CS and the Catoni-style CS both exhibit restrained growth of interval width when $\alpha$ becomes small. On the other hand, the Dubins-Savage CS, with its super-logarithmic $\mathcal{O}(\alpha^{-1/2})$ growth, perform markedly worse in contrast to those with logarithmic growth.

We run the same experiment again, this time with Gaussian data with variance $\sigma^2 = 25$ and we add two CSs and one CI for subGaussian random variables with variance factor $\sigma^2$ from previous literature for comparison. First, the stitched subGaussian CS \citep[Equation (1.2)]{howard2021time} which we review in \cref{prop:stitch};
%Second, the two-sided normal mixture CS from \eqref{eqn:nmix-cs}, with the parameter $\rho = 1$.
Second, the predictably-mixed Hoeffding CS
\begin{equation}
    \mathrm{CI}^{\textsf{PM-H}}_t = \left[ \frac{\sum_{i=1}^t \lambda_i X_i \pm (\sigma^2 \sum_{i=1}^t \lambda_i^2 / 2 + \log(2/\alpha))}{\sum_{i=1}^t \lambda_i} \right]
    % ,\quad \lambda_i = \sqrt{\frac{2\log(2/\alpha)}{\sigma^2 i}}
\end{equation}
with \eqref{eqn:catoni-opt-lambda}; Third, the standard subGaussian Chernoff CI from \eqref{eqn:chernoff}. Recall that all three of the above bounds are not valid under only a finite variance assumption, but require a subGaussian moment generating function. This extended comparison is plotted in \cref{fig:width-gaus}.

Next, we examine the rates of shrinkage as $t \to \infty$. We sequentially sample from (i) a Student's t-distribution with 5 degrees of freedom, randomly centered and rescaled to variance $\sigma^2 = 25$, (ii) a normal distribution with variance $\sigma^2 = 25$, at each time calculating the intervals. We include again the Catoni's CI in both cases, the three subGaussian CI/CSs in the Gaussian case. The evolution of these intervals is shown in \cref{fig:shrink,fig:shrink-2}, and their widths in \cref{fig:shrink-len,fig:shrink-len-2}.

\begin{figure}[!h]
    \centering
    \subfloat[Comparison of intervals under \newline $\mathrm{t}_3$ distribution.]{{\includegraphics[height=0.375\columnwidth]{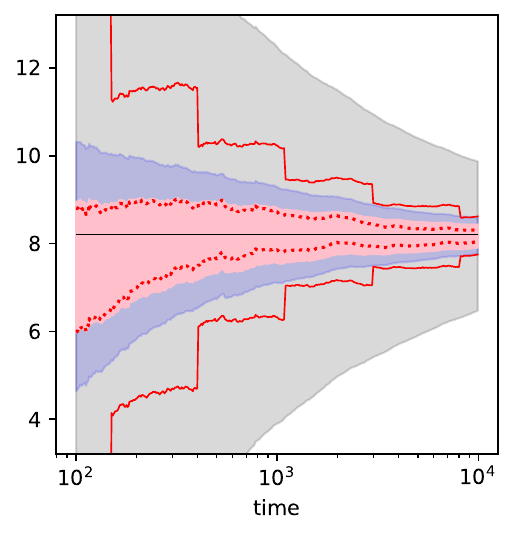} } \label{fig:shrink}}%
    %\quad
    \subfloat[Comparison of intervals under \newline normal distribution.]{{\includegraphics[height=0.375\columnwidth]{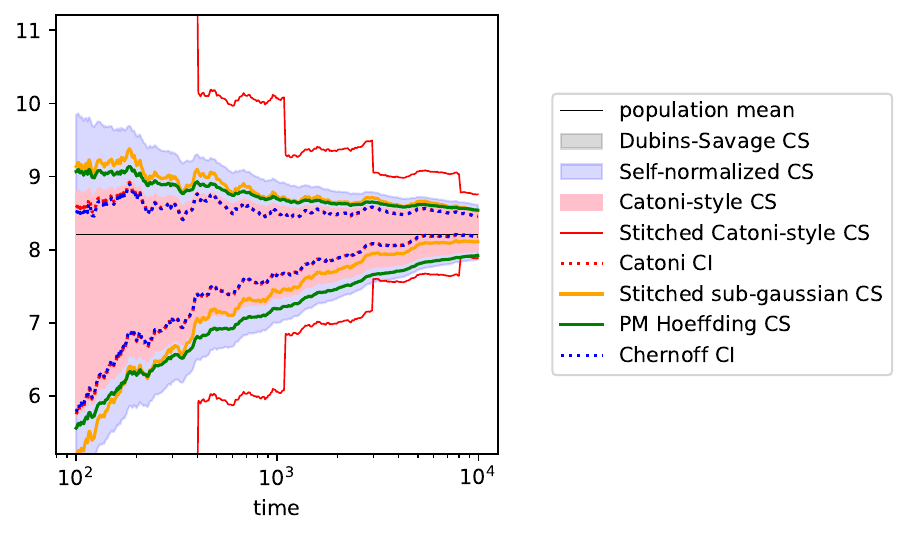} }  \label{fig:shrink-2} }%
    \newline
    \subfloat[Width shrinkage curves under \newline $\mathrm{t}_3$ distribution.]{{\includegraphics[height=0.375\columnwidth]{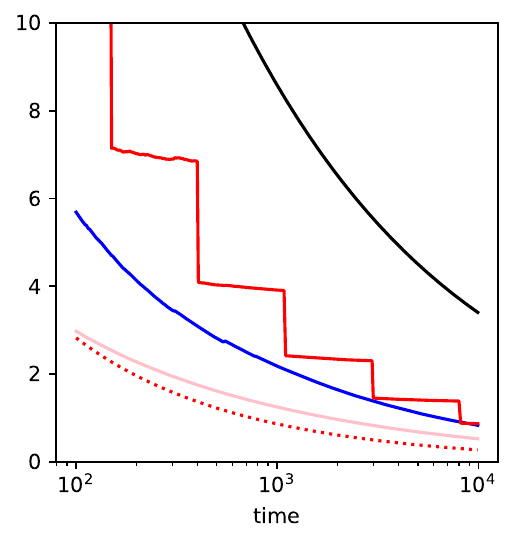} } \label{fig:shrink-len}}%
    %\quad
    \subfloat[Width shrinkage curves under \newline normal distribution.]{{\includegraphics[height=0.375\columnwidth]{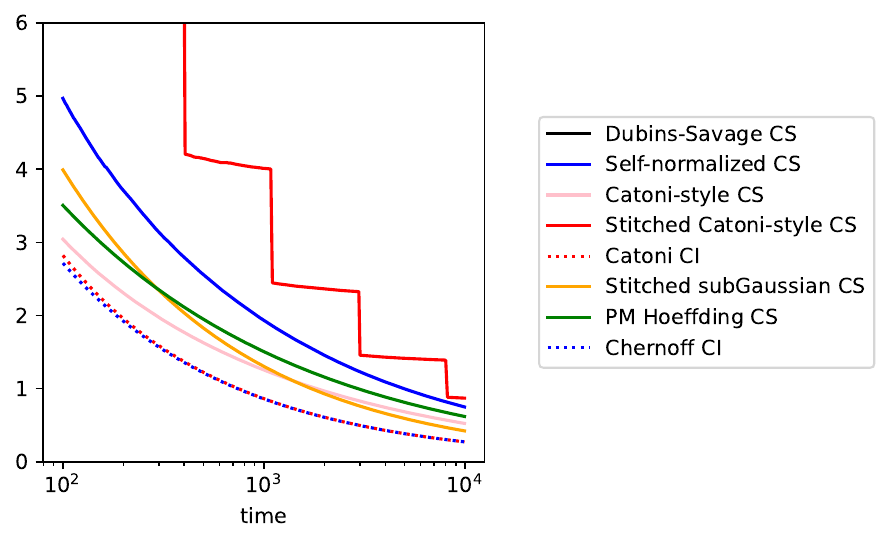} }  \label{fig:shrink-len-2} }%
    \caption{\small Comparison of CI/CS shrinkage rates at $\alpha = 0.05$. In the heavy-tailed regime, our Catoni-style CS performs markedly better than the other two CSs, and is close to the Catoni CI. In the Gaussian setting, our Catoni-style CS is approximately at a same caliber as the best subGaussian CSs in the literature. There appears no benefit in using the minimax optimal stitched Catoni-style CS for sample sizes within 10,000 due to its large constant. These new observations appear to be practically useful.}%
    %\label{fig:example}%
\end{figure}

It is important to remark that in the particular instantiation of the random variables that were drawn for the runs plotted in these figures, the Catoni CI seems to always cover the true mean; however, we know for a fact (theoretically from the law of the iterated logarithm for M-estimators \citep{brunel2019nonasymptotic}; empirically from \cref{fig:miscoverage}) that the Catoni CI will eventually miscover with probability one, and it will in fact miscover infinitely often, in every single run. When the first miscoverage exactly happens is a matter of chance (it could happen early or late), but it will almost surely happen infinitely~\citep{howard2021time}. Thus, the Catoni CI cannot be trusted at data-dependent stopping times, as encountered via continuous monitoring in sequential experimentation, but the CSs can. The price for the extra protection offered by the CSs is in lower order terms (polylogarithmic), and the figures suggest that it is quite minimal, the Catoni-CS being only ever so slightly wider than the Catoni CI.

\section{Heteroscedastic and infinite variance data}\label{sec:inf-var-in-text}
In lieu of \cref{ass:var}, we can consider a much more general setting that encompasses data drawn from distributions \emph{without} a finite variance, e.g.\ Pareto distribution or stable distribution with index in $(1,2)$, and possibly those that are \emph{increasing} in scale.
\begin{assumption}\label{ass:pthmoment-bound-het}
The process is conditionally $L^p$ with an upper bound, known \emph{a priori}, on the conditional central $p$\textsuperscript{th} moment:
    \begin{equation}\label{eqn:pthmoment-bound-het}
        \forall t\in\mathbb N^+, \quad \Exp[|X_t -\mu|^p  \mid \mathcal{F}_{t-1}] \le v_t, \quad (1<p\le 2)
\end{equation}
where $\{v_t\}_{t \in \mathbb N^+}$ is a predictable, nonnegative process.
\end{assumption}

When $p=2$, all of the three confidence sequences can extend naturally to handle such scenario of heteroscedasticity. We leave the details of the heteroscedastic versions of the Dubins-Savage CS and the self-normalized CS to \cref{sec:het-sce}. For the infinite variance case $1 < p < 2$, the generalization of the Dubins-Savage inequality in this infinite variance regime by \citet{khan2009p} can easily be used to construct a confidence sequence under \cref{ass:pthmoment-bound-het}, extending our \cref{thm:cids}. However, due to the relatively unsatisfactory performance of the Dubins-Savage CS, we do not elaborate upon this extension.

Let us focus primarily on extending our Catoni-style CS in \cref{thm:cs-cat} to \cref{ass:pthmoment-bound-het} in this section. To achieve this, we resort to an argument similar to the generalization of the Catoni CI by \citet{chen2021generalized}. 
%Our extension comes from the observation that, after changing every ``2'' into ``$p$'' in \cref{ass:var}, the definition of the Catoni-type influence function, the Catoni supermartingales in \cref{lem:cat-sm}, the Catoni-style CS in \cref{thm:cs-cat}, as well as the tightness bound in \cref{thm:catoni-tight}, every bit of our framework still holds true \emph{mutatis mutandis}, and we elaborate this below.

We say that $\phi_p:\mathbb R \to \mathbb R$ is a $p$-Catoni-type influence function, if it is increasing and $-\log(1 - x + |x|^p/p) \le \phi_p(x) \le \log(1 + x + |x|^p/p)$. A simple example is 
\begin{equation}
    \phi_p (x) = \begin{cases} \log(1 + x + x^p/p), & x \ge 0; \\ -\log(1 - x + (-x)^p/p), & x < 0. \end{cases}
\end{equation}
\begin{lemma}[$p$-Catoni supermartingales]\label{lem:alphacat-sm}
Let $\phi_p$ be a $p$-Catoni-type influence function. Under \cref{ass:mean} and \cref{ass:pthmoment-bound-het}, the following processes are nonnegative supermartingales,
\begin{gather}\label{eqn:alphacat-sm}
    {\Mcat_t}^{p} = \prod_{i=1}^t \exp \left(\phi_p(\lambda_i (X_i  -\mu)) - \lambda^p_i v_i/p \right);
    \\  
    {\Ncat_t}^{p} = \prod_{i=1}^t \exp \left(-\phi_p(\lambda_i (X_i  -\mu)) - \lambda^p_i v_i/p \right) .
\end{gather}
\end{lemma}
The proof is straightforwardly analogous to the one of \cref{lem:cat-sm}. The corresponding CS can be easily expressed akin to \cref{thm:cs-cat}.
\begin{theorem}[$p$-Catoni-style confidence sequence]\label{thm:alpha-cs-cat}
Let $\phi_p$ be a $p$-Catoni-type influence function. Under \cref{ass:mean} and \cref{ass:pthmoment-bound-het}, the following intervals $\{{\CICT_t}^p\}$ form a $(1-\alpha)$-confidence sequence of $\mu$:
\begin{multline}\label{eqn:alphacatoni-cs}
    {\CICT_t}^p =  \Bigg\{  m \in \mathbb R :  -  \frac{ \sum_{i=1}^t v_i\lambda_i^p}{p} - {\log(2/\alpha)}   \le \sum_{i=1}^t  \phi_p(\lambda_i (X_i  - m ))  \le \frac{ \sum_{i=1}^t v_i\lambda_i^p }{p} + {\log(2/\alpha)} \Bigg\}.
\end{multline}
\end{theorem}
\citet{chen2021generalized} point out that in the i.i.d.\ case (i.e.\ assuming $v_t = v$ for all $t$ in \cref{ass:pthmoment-bound-het}), the asymptotically optimal choice for the rate of decrease of $\{\lambda_t\}$, when working with this $L^p$ set-up, would be $\lambda_t \asymp t^{-1/p}$. Specifically, in \citet[Proof of Theorem 2.6]{chen2021generalized}, the authors recommend the tuning
\begin{equation}\label{eqn:chen-lambda}
    \lambda_t = \frac{1}{2}\left( \frac{2p \log(2/\alpha)}{tv} \right)^{1/p}
\end{equation}
to optimize their CI. We adopt exactly the same tuning \eqref{eqn:chen-lambda} in our experiment, shown in \cref{fig:catp}, with i.i.d., infinite variance Pareto data. Indeed, employing $\lambda_t \asymp t^{-1/p}$ also leads to a width concentration bound of optimal shrinkage rate $t^{-(p-1)/p}$, similar to \cref{thm:catoni-tight} and proved in \cref{sec:pf}.

\begin{figure}[h]
\vspace{-0.1cm}
  \begin{minipage}[c]{0.35\textwidth}
    \includegraphics[width=\textwidth]{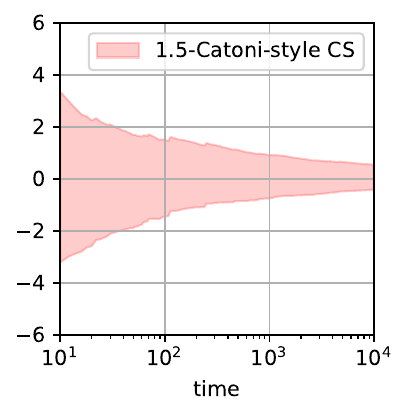}
  \end{minipage}\hfill
  \begin{minipage}[c]{0.6\textwidth}
    \caption{\small $p$-Catoni-style ($p=1.5$) confidence sequence $\{ {\CICT_t}^{p} \}$ under $X_t \overset{\text{i.i.d.}}{\sim} \text{Pareto}(1.8)$ (recentered to $\mu = 0$), $v = 5$, and $\alpha = 0.05$. The sequence $\{ \lambda_t \}$ follows \eqref{eqn:chen-lambda}.
    } \label{fig:catp}
  \end{minipage}
\end{figure}

\begin{theorem}\label{thm:infvar-tight}
    Suppose the coefficients $\{\lambda_t \}$ and the conditional $p$\textsuperscript{th} moments $\{ v_t \}$ are all non-random and let $0 < \varepsilon < 1$. Suppose further that
    \begin{equation}\label{eqn:quadratic-condition-p}
         \left( \sum_{i=1}^t \lambda_i^p \right)^{1/(p-1)} \left( \sum_{i=1}^t  \lambda_i \right)^{-p/(p-1)} \left( \sum_{i=1}^t  5 \lambda_i^p v_i/ p + \log(2/\varepsilon) + \log(2/\alpha) \right) \le \frac{p-1}{p}.
    \end{equation}
    Then, with probability at least $1-\varepsilon$,
    \begin{equation}
        |{\CICT_t}^p | \le \frac{  \sum_{i=1}^t  10 \lambda_i^p v_i + 2p\log(2/\varepsilon) + 2p\log(2/\alpha)  }{\sum_{i=1}^t  \lambda_i}.
    \end{equation}
\end{theorem}

Similar to the case in \cref{thm:catoni-tight}, \eqref{eqn:quadratic-condition-p} is an entirely deterministic inequality when $\{ v_t \}$ and $\{\lambda_t \}$ are all nonrandom. When $v_t = \Otheta(1)$ and $\lambda_t = \Otheta (t^{-1/p})$, which is the case for \eqref{eqn:chen-lambda}, the condition \eqref{eqn:quadratic-condition-p} holds for large $t$ since $\sum_{i=1}^t \lambda_i = \Otheta(t^{(p-1)/p})$ while $\sum_{i=1}^t \lambda_i^p$ grows logarithmically. This gives us the following qualitative version of \cref{thm:infvar-tight}, like (and generalizing) \cref{cor:catoni-tight}.

\begin{corollary}\label{cor:infvar-tight} Suppose $ v_t = \Otheta(1), \lambda_t  = \Otheta (t^{-1/p})$, both nonrandom, such as in~\eqref{eqn:chen-lambda}. Then if $t > \polylog(1/\varepsilon, 1/\alpha)$, with probability at least $1 - \varepsilon$,
\begin{equation}
     |{\CICT_t}^p|\le \Otilde( t^{-(p-1)/p}(\log(1/\varepsilon) + \log(1/\alpha))).
\end{equation}
Here, the notation $\Otilde$ only hides logarithmic factors in $t$.
\end{corollary}
We remark that this shrinkage rate, up to a logarithmic factor in $t$, matches the lower bound for CIs by \citet[Theorem 3.1]{devroye2016}. If we let $\{v_t\}$ grow, say in a rate of $v_t = \Otheta(t^\gamma)$, one may match the scale growth of data by adjusting the $\{ \lambda_t \}$ sequence to a more decreasing one, in order to optimize the width bound in \cref{thm:infvar-tight}.
\begin{corollary}\label{cor:infvar-tight-2} Suppose $0 \le \gamma < p-1$ and $v_t = \Otheta(t^\gamma), \lambda_t  = \Otheta (t^{-(1+\gamma)/p})$, both nonrandom. Then if $t > \polylog(1/\varepsilon, 1/\alpha)$, with probability at least $1 - \varepsilon$,
\begin{equation}
     |{\CICT_t}^p|\le \Otilde( t^{-(p-1 - \gamma)/p }(\log(1/\varepsilon) + \log(1/\alpha))).
\end{equation}
Here, the notation $\Otilde$ only hides logarithmic factors in $t$.
\end{corollary}

%\section{Further Extensions}

\section{Discussions and extensions}

\subsection{Minimality of the moment assumptions}\label{sec:minimality}

We stress here that an upper bound on a $(1+\delta)$\textsuperscript{th} moment, for example the upper variance bound $\sigma^2$ in \cref{ass:var}, is required to be known. We have seen in \cref{sec:inf-var-in-text} that \cref{ass:var} can be \emph{weakened} in various ways, but is not \emph{eliminated} since another moment bound is introduced.

Such assumptions, strong as they may seem at first sight, are necessitated by the results of \citet{bahadur1956nonexistence}, which immediately imply that if no upper bound on a moment is known \emph{a priori}, mean estimation is provably impossible. Indeed, without a known moment bound, even nontrivial tests for whether the mean equals zero do not exist, meaning that all tests have trivial power (since power is bounded by the type-I error), and thus cannot have power going to 1 while the type-I error stays below $\alpha$. The lack of power one tests for a point null, thanks to the
%inversion 
duality
between CIs and families of tests, in turn implies the impossibility of intervals that shrink to zero width. In a similar spirit, one can see that the lower bound of \cref{prop:gaussianlowerbound} grows to infinity as $\sigma$ does, indicating that a confidence interval (hence a confidence sequence) must be unboundedly wide when no bound on $\sigma$ is in place.

\subsection{Drifting means}\label{sec:non-constant}
Our three confidence sequences also extend, at least in theory, to the case when \cref{ass:mean} is weakened to
\begin{equation}
    \forall t\in\mathbb N^+, \quad \Exp[X_t \mid \mathcal{F}_{t-1}] = \mu_t
\end{equation}
where $\{ \mu_t \}$ is any predictable process. This, in conjunction with \cref{sec:inf-var-in-text}, implies that our work provides a unified framework for \emph{any} $L^2$ process $\{X_t\}$. Such generalization is done by replacing every occurrence of $(X_i - \mu)$ in the martingales \eqref{eqn:ds-martingale} and supermartingales \eqref{eqn:sn-nsm}, \eqref{eqn:cat-sm} by $(X_i - \mu_i)$. The closed-form Dubins-Savage confidence sequence \eqref{eqn:cids} now tracks the weighted average
\begin{equation}
    \mu^\star_t = \frac{\sum_{i=1}^t \lambda_i \mu_i}{\sum_{i=1}^t \lambda_i}
\end{equation}
by $\Pr[\forall t \in \mathbb N^+, \mu^\star_t \in \CIDS_t] \ge 1 - \alpha$. In the case of self-normalized and Catoni-style confidence sequence, a \emph{confidence region} $\CR_t \subseteq \mathbb R^t$ can be solved from the Ville's inequality at each $t$, such that $\Pr[ \forall t \in \mathbb N^+, (\mu_1, \dots, \mu_t) \in \CR_t ] \ge 1-\alpha$. The exact geometry of such confidence regions shall be of interest for future work.

\subsection{Sharpening the confidence sequences by a running intersection}
It is easy to verify that if $\{ \CI_t \}$ forms a $(1-\alpha)$-CS for $\mu$, so does the sequence of running intersections 
\begin{equation}
\widetilde{\CI_t} = \bigcap_{i=1}^t \CI_i,    
\end{equation}
a fact first pointed out by \citet{darling1967confidence}. The intersected sequence $\{\widetilde{\CI_t}\}$ is at least at tight as the original one $\{ \CI_t \}$, while still enjoying a same level of sequential confidence. However, \citet[Section 6]{howard2021time} points out that such practice does not extend to the drifting parameter case, and may suffer from empty interval. We remark that, following the discussion in \cref{sec:non-constant}, we can still perform an intersective tightening under drifting means. To wit, if $\{\CR_t \}$ is a confidence region sequence satisfying $\Pr[ \forall t \in \mathbb N^+, (\mu_1, \dots, \mu_t) \in \CR_t ] \ge 1-\alpha$, so is the sequence formed by
\begin{equation}
    \widetilde{\CR_t} = \bigcap_{i=1}^t \left( \CR_i \times \mathbb R^{t-i}\right).
\end{equation}
The peril of running intersection, however, is that it may result in an empty interval. Though this happens with probability less than $\alpha$ by the definition of CS, an empty interval is a problematic result in practice that one would like to avoid.

\subsection{Sequential hypothesis testing and one-sided 
 inexact nulls}\label{sec:testing}

It is clear that our construction of confidence sequences provides a powerful tool to sequentially test the null hypothesis of \cref{ass:mean} against the alternative
\begin{equation}
    H_a : \quad \forall t\in\mathbb N^+, \; \Exp[X_t \mid \mathcal{F}_{t-1}] = \mu_a \neq \mu,
\end{equation}
viz., we reject \cref{ass:mean} \emph{whenever} $\mu \notin \CI_t$. For the self-normalized CS and Catoni-style CS which are based on a pair of supermartingales, it is when either of the supermartingales exceeds $2/\alpha$, and there is no need to explicitly calculate the interval $\CISN_t$ or $\CICT_t$. When these supermartingales are used in this manner, they are commonly called \emph{test supermartingales} for the null \cref{ass:mean}. The case of the Dubins-Savage CS is slightly different, discussed in \cref{sec:ds-discussion}.

Further, it is common in statistics to consider the following one-sided testing problem:
\begin{gather}
  H_0 : \quad \forall t\in\mathbb N^+, \; \Exp[X_t \mid \mathcal{F}_{t-1}] \le \mu; \label{eqn:null}
  \\
      H_a : \quad \forall t\in\mathbb N^+, \; \Exp[X_t \mid \mathcal{F}_{t-1}] > \mu.
\end{gather}
We demonstrate briefly here that our invention can easily handle this scenario. Note that one of the Catoni supermartingales, $\{ \Mcat_t \}$ \eqref{eqn:cat-sm}, is still a supermartingale when the weaker assumption $H_0$ \eqref{eqn:null} instead of \cref{ass:mean} holds. This implies that the following one-sided intervals $\{\CICTplus_t\}$ form a $(1-\alpha)$-confidence sequence\footnote{Even though $\mu$ is no longer a functional of the distribution (but a upper bound thereof), we still call $\{ \CI_t \}$ a $(1-\alpha)$-CS for $\mu$ if \eqref{eqn:cs} holds.} of $\mu$, under any distribution of the null:
\begin{equation}\label{eqn:catoni-cs-onesided}
    \CICTplus_t =  \bigg\{  m \in \mathbb R :   \sum_{i=1}^t  \phi(\lambda_i (X_i  - m ))  \le \frac{\sigma^2 \sum_{i=1}^t \lambda_i^2 }{2} + {\log(1/\alpha)} \bigg\} = [ \min(\CICTplus_t) , \infty).
\end{equation}

Hence, the sequential test of rejecting $H_0$ when $\min(\CICTplus_t) > \mu$ attains type I error control within $\alpha$.

\subsection{Tighter supermartingales}\label{sec:tighter}
If one scrutinizes the proof of our Catoni-style CS (\cref{thm:cs-cat}), from \eqref{eqn:tighter-nsm}, it is tempting to consider the following supermartingales
\begin{gather}
    M^{\mathsf{C}*}_t = \prod_{i=1}^t (1 + \lambda_t(X_i - \mu) + \lambda^2_t (X_i - \mu)^2/2)  \cdot \exp (  - \lambda^2_t \sigma^2/2  ), \label{eqn:tighter-sm1}
    \\
    N^{\mathsf{C}*}_t = \prod_{i=1}^t (1 - \lambda_t(X_i - \mu) + \lambda^2_t (X_i - \mu)^2/2)  \cdot \exp (  - \lambda^2_t \sigma^2/2  ). \label{eqn:tighter-sm2}
\end{gather}
since $\Mcat_t \le M^{\mathsf{C}*}_t$ and $\Ncat_t \le N^{\mathsf{C}*}_t$. By \cref{lem:comparison} in \cref{sec:pf}, this larger pair of supermartingales indeed yields a CS even tighter than the Catoni-style CS. However, we remark that the difference between this tighter CS and the Catoni-style CS is small as $\lambda_t$ decreases; and this tighter CS is computationally infeasible: finding the root of $ M^{\mathsf{C}*}_t, N^{\mathsf{C}*}_t = 2/\alpha$ suffers from non-monotonicity (so that we may not easily find the largest/smallest root which defines the endpoints of the CS) and high sensitivity. However, following the discussion in \cref{sec:testing}, it is easy to \emph{test} if $\mu$ is in this tighter CS, i.e.\ if $ M^{\mathsf{C}*}_t, N^{\mathsf{C}*}_t \le 2/\alpha$ actually holds. Therefore, we recommend that one use the Catoni supermartingales when constructing a CS, but use \eqref{eqn:tighter-sm1}, \eqref{eqn:tighter-sm2} when sequentially testing the null \cref{ass:mean}.

\section{Concluding remarks}

In this paper, we present three kinds of confidence sequences for mean estimation of increasing tightness, under an extremely weak assumption that the conditional variance is bounded. The third of these, the Catoni-style confidence sequence, is shown both empirically and theoretically to be close to the previously known confidence sequences and even confidence intervals that only work under light tails requiring the existence of all moments, as well as their decay.

This elegant result bears profound theoretical implications. We now know that the celebrated rate of shrinkage $\mathcal{O}(t^{-1/2})$ and rate of growth $\mathcal{O}(\log(1/\alpha))$ of confidence intervals produced by MGF-based concentration inequalities (e.g.\ Chernoff bound \eqref{eqn:chernoff}) extend essentially in two directions simultaneously: heavy tail up to the point where only the second moment is required to exist, and sequentialization to the anytime valid regime.

Our work shall also find multiple scenarios of application, many of which are related to multi-armed bandits and reinforcement learning. For example, the best-arm identification problem \citep{jamieson2014best} in the stochastic bandit literature relies on the construction of confidence sequences, and most previous works typically study the cases of Bernoulli and subGaussian bandits. Given the result of this paper, we may now have a satisfactory solution when heavy-tailed rewards \citep{bubeck2013bandits} are to be learned. A similar locus of application is the off-policy evaluation problem \citep{thomas2015high} in contextual bandits, whose link to confidence sequences was recently established \citep{karampatziakis2021off}. While \citet{karampatziakis2021off} only considered bounded rewards, our work provides the theoretical tools to handle a far wider range of instances.

Besides the issues of drifting means we mentioned in \cref{sec:non-constant}, the search for an all-encompassing LIL lower bound we mentioned in \cref{sec:lb}, we also expect future work to address the problem of multivariate or matrix extensions. The study by \citet{catoni2017dimension}, we speculate, can be a starting point. Finally, the online algorithm for approximating the interval in the Catoni-style CS \eqref{eqn:catoni-cs} can also be studied.
%Finally, the setting where the variance is infinite but a moment bound of order $p$ ($1<p<2$) exists can be of potential interest. The generalization of the Dubins-Savage inequality by \citet{khan2009p} may be a starting point, but given their poor performance in this paper, $\ell_p$-extensions of the self-normalized or Catoni approaches may need to be developed.

\subsubsection*{Acknowledgments}
Research reported in this paper was sponsored in part by the DEVCOM Army Research Laboratory under Cooperative Agreement W911NF-17-2-0196 (ARL IoBT CRA). The views and conclusions contained in this document are those of the authors and should not be interpreted as representing the official policies, either expressed or implied, of the Army Research Laboratory or the U.S.\ Government. The U.S.\ Government is authorized to reproduce and distribute reprints for Government purposes notwithstanding any copyright notation herein.

\bibliography{main}

\newpage

\appendix
\section{Tuning the coefficients $\{\lambda_t\}$}

\subsection{Tuning the coefficients in the Dubins-Savage confidence sequence}\label{sec:ds-tuning}
Note that when \eqref{eqn:const-lambdas} happens, the half-width of the CI at $t^\star$ is
\begin{equation}
   \frac{1}{2}|\CIDS_{t^\star}| = \frac{{2/\alpha - 1} + \sigma^2 t^\star \lambda^{\star 2} }{t^\star \lambda^\star} = \frac{2/\alpha - 1} {t^\star\lambda^\star} + \sigma^2\lambda^\star,
\end{equation}
which obtains optimal width when
\begin{equation}
    \lambda^\star = \sqrt{\frac{2/\alpha - 1}{\sigma^2 t^\star}}.
\end{equation}
With the above guidance, in \cref{thm:cids}, we take
\begin{equation}
    \lambda_t = \sqrt{\frac{2/\alpha - 1}{\sigma^2 t}}.
\end{equation}
Then, the CS half-width at time $t$ is
\begin{equation}
    \frac{1}{2}|\CIDS_{t}| = \frac{ 2/\alpha - 1 + \sigma^2 \sum_{i=1}^t \frac{2/\alpha - 1}{\sigma^2 i}   }{ \sum_{i=1}^t \sqrt{\frac{2/\alpha - 1}{\sigma^2 i}} } =  \frac{ \sqrt{2/\alpha - 1} \, \sigma (1 +  \sum i^{-1})}{\sum i^{-1/2}} \asymp \frac{ \sqrt{2/\alpha - 1} \, \sigma \log t}{\sqrt{t}}.
\end{equation}

\subsection{Tuning the coefficients in the self-normalized confidence sequence}\label{sec:sn-tuning}
Take $t^\star$ as fixed and $\lambda_1 = \lambda_2 = \dots = \lambda^\star$ as constant. Define $S_{\ell} = \sum_{i=1}^{t^\star} X_i^\ell$. The middle interval length is now
\begin{adjustwidth}{-2cm}{-2cm}
 \begin{align*}
    & |\MSN_{t^\star}|
     \\
     =& {\scriptstyle \frac{2{t^{\star}}\lambda^\star - \sqrt{  \left( \frac{\lambda^{\star 2} S_{1}}{3} - {t^{\star}} \lambda^\star \right) ^2 - 4  \frac{t^\star{\lambda^\star} ^2}{6}  \left( \log(2/\alpha) -  {\lambda^\star} S_{1}  + \frac{ {\lambda^\star}^2 (S_{2} + 2\sigma^2 {t^{\star}})}{6} \right)  }  - \sqrt{  \left( \frac{ {\lambda^\star}^2 S_{1}}{3} + {t^{\star}} {\lambda^\star} \right) ^2 - 4  \frac{{t^{\star}} {\lambda^\star} ^2}{6}  \left( \log(2/\alpha) +  {\lambda^\star} S_{1} + \frac{{\lambda^\star}^2 (S_{2} + 2\sigma^2 {t^{\star}})}{6} \right)  } }{\frac{{t^{\star}}{\lambda^\star}^2}{3}} }
     \\
     = & {\scriptstyle \frac{2{t^{\star}} - \sqrt{  \left( \frac{{\lambda^\star} S_{1}}{3} - {t^{\star}} \right) ^2 -   \frac{2{t^{\star}}}{3}  \left( \log(2/\alpha) -  {\lambda^\star} S_{1}  + \frac{ {\lambda^\star}^2 (S_{2} + 2\sigma^2 {t^{\star}})}{6} \right)  }  - \sqrt{  \left( \frac{ {\lambda^\star} S_{1}}{3} + {t^{\star}}  \right) ^2 -  \frac{2{t^{\star}}}{3}  \left( \log(2/\alpha) +  {\lambda^\star} S_{1}  + \frac{{\lambda^\star}^2 (S_{2} + 2\sigma^2 {t^{\star}})}{6} \right)  } }{\frac{{t^{\star}}{\lambda^\star}}{3}}}
     \\
      \approx & \frac{ - \frac{ \frac{{\lambda^\star}^2 (S_{1})^2}{9} - 2{t^{\star}} \frac{{\lambda^\star}S_{1}}{3} -  \frac{2{t^{\star}}}{3}  \left( \log(2/\alpha) -  {\lambda^\star} S_{1}  + \frac{ {\lambda^\star}^2 (S_{2} + 2\sigma^2 {t^{\star}})}{6} \right)  }{2{t^{\star}}} - \frac{ \frac{{\lambda^\star}^2 (S_{1})^2}{9} + 2{t^{\star}} \frac{{\lambda^\star} S_{1}}{3} -  \frac{2{t^{\star}}}{3}  \left( \log(2/\alpha) +  {\lambda^\star} S_{1}  + \frac{ {\lambda^\star}^2 (S_{2} + 2\sigma^2 {t^{\star}})}{6} \right)  }{2{t^{\star}}} }{\frac{{t^{\star}}{\lambda^\star}}{3}}
      \\
      = & \frac{  \frac{4{t^{\star}}}{3}  \left( \log(2/\alpha)   + \frac{ {\lambda^\star}^2 (S_{2} + 2\sigma^2 {t^{\star}})}{6} \right) - \frac{2{\lambda^\star}^2 (S_{1})^2}{9}  }{\frac{2{t^{\star}}^2 {\lambda^\star}}{3}}
      =  \frac{ \frac{2{t^{\star}}(S_{2}+2 \sigma^2 {t^{\star}}) - 2(S_{1})^2}{9}  {\lambda^\star} + \frac{4{t^{\star}} \log(2/\alpha)}{3} \frac{1}{{\lambda^\star}} }{\frac{2{t^{\star}}^2}{3}},
 \end{align*}\end{adjustwidth}
where we use the approximation $\sqrt{t^2 + \text{smaller term}} \approx t + \frac{\text{smaller term}}{2t}$.
Examining the final expression, the optimal ${\lambda^\star}$ is hence taken as
\begin{equation}\label{eqn:sn-opt}
    \lambda^\star \approx \sqrt{\frac{6t^\star \log(2/\alpha)}{t^\star(S_{2}+2 \sigma^2 t^\star) - (S_{1})^2}}.
\end{equation}
Since we need $\lambda_t$ to be $\mathcal{F}_{t-1}$-measurable, we replace $S_{2}$ and $S_{1}$ with $\sum_{i=1}^{t-1} X^2_i $ and $\sum_{i=1}^{t-1} X_i$, but all other occurrences of $t^\star$ with $t$ in \eqref{eqn:sn-opt} to obtain our predictable sequence $\{\lambda_t\}$ of choice for \cref{thm:cisn},
\begin{equation}
    \lambda_t = \sqrt{\frac{6t \log(2/\alpha)}{t(\sum_{i=1}^{t-1} X_i^2 +2 \sigma^2 t) - (\sum_{i=1}^{t-1} X_i)^2}}.
\end{equation}

\section{Discussion on the Dubins-Savage confidence sequence}
\label{sec:ds-discussion}

We first present here a short and self-contained proof of the Dubins-Savage inequality \citep{dubins1965tchebycheff,khan2009p}.

\begin{proof}[Proof of \cref{lem:ds}] 
Consider the function $Q(x) = \frac{1}{1-\min(x,0)}$. It is not hard to see that, for any $x \in \mathbb R$ and $m\le0$,
\begin{equation}\label{eqn:Q-upperbound}
  Q(x) \le \frac{ (1-m) + (x-m) + (x-m)^2 }{(1-m)^2};
\end{equation}
and for any $x,m\le 0$,
\begin{equation}\label{eqn:Q-lowerbound}
     \frac{ 1-2m-x }{(1-m-x)^2} \le Q(x).
\end{equation}
See \cref{fig:dsfunc} for an illustration.

\begin{figure}[!b]
    \centering
    \includegraphics[width=0.45\textwidth]{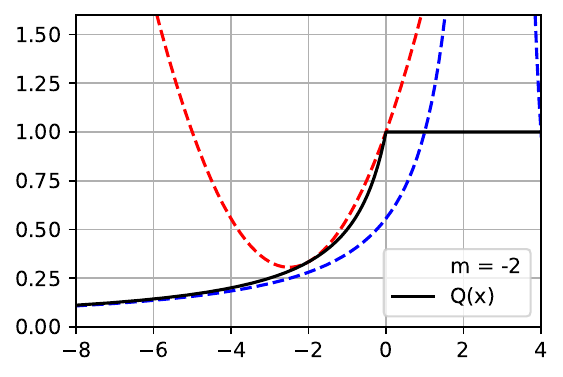} \includegraphics[width=0.45\textwidth]{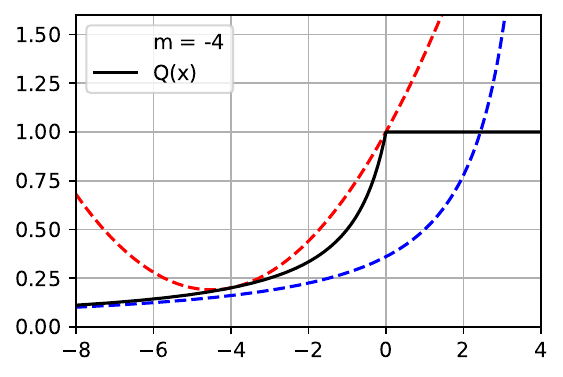}
    \caption{\small Illustration of the upper (\eqref{eqn:Q-upperbound}, plotted in dashed red) and lower (\eqref{eqn:Q-lowerbound}, plotted in dashed blue) bounding functions of $Q(x)$ with $m=-2$, $-4$.}
    \label{fig:dsfunc}
\end{figure}
Now define the following random variables:
\begin{gather}
    x_t =  - ab + bM_t - b^2 \sum_{i=1}^t V_i; \\
    m_{t} = \Exp[ x_t  |\mathcal{F}_{t-1}] =  -ab + bM_{t-1}- b^2 \sum_{i=1}^{t} V_{i}; \\
    m_t' = m_t - x_{t-1}  = -b^2 V_t .
\end{gather}

We shall show that $\{Q(x_t)\}$ is a supermartingale, meaning that $\Exp[ Q(x_t) | \mathcal{F}_{t-1} ] \le Q(x_{t-1})$. First, if $x_{t-1} > 0$, the inequality is trivial. Second, when $x_{t-1} \le 0$, we have $m_t = x_{t-1} + m'_t \le 0.$ So, using \eqref{eqn:Q-upperbound} and \eqref{eqn:Q-lowerbound},
\begin{align}
    & \Exp[ Q(x_t) | \mathcal{F}_{t-1} ] \le \Exp\left[  \frac{ (1-m_{t}) + (x_t-m_{t}) + (x_t-m_{t})^2 }{(1-m_t)^2} \middle \vert \mathcal{F}_{t-1} \right]
    \\
    = & \Exp \left[ \frac{(1-m_{t}) + b(M_t-M_{t-1}) +  b^2(M_t-M_{t-1})^2 }{(1-m_{t})^2} \middle \vert \mathcal{F}_{t-1} \right]
    \\
    =&  \frac{(1-m_{t}) +  b^2 V_t^2 }{(1-m_{t})^2} = \frac{ 1-2m_t'-x_{t-1} }{(1-m_t'-x_{t-1})^2} \le Q( x_{t-1}).
\end{align}
Since $x_0=-ab$ and $Q(x_0)=1/(1+ab)$, we define $R(x) := (1+ab)Q(x)$ to obtain a nonnegative supermartingale $\{R(x_t)\}$ with $R(x_0) = 1$, on which we can use Ville's inequality (\cref{lem:vi}) to conclude that
    \begin{align}
        \Pr\left[  \exists t \in \mathbb N^+, \; M_t \ge a + b \sum_{i=1}^t V_i \right] &= \Pr\left[  \exists t \in \mathbb N^+, \; x_t \ge 0 \right] = \Pr\left[  \exists t \in \mathbb N^+, \; Q(x_t) \ge 1 \right] \\ &= \Pr[ \exists t \in \mathbb N^+, \; R(x_t) \ge 1+ab ] \le \frac{1}{1+ab},
    \end{align}
    concluding the proof.
\end{proof}

Indeed, we can see from the proof that the Dubins-Savage inequality can actually be derived from Ville's inequality on a nonnegative supermartingale. In the parlance of \cref{sec:testing}, 
the process $\{ R(x_t) \}$ can be used as a test supermartingale for the null \cref{ass:mean}, when setting $M_t $ to be $ \sum \lambda_i(X_i - \mu)$.
However, there is a major difference in how this test supermartingale relates to the
the Dubins-Savage confidence sequence: if one fixes \emph{a priori} the parameters $a$ and $b$, the rejection rule $R(x_t) \ge 1/\alpha$ is equivalent to the Dubins-Savage CS (\cref{thm:cids}) \emph{only when $\alpha = 1/(1+ab)$}. This is unlike the cases of the other two CSs in this paper, where the duality between confidence sequence and sequential testing  holds for any $\alpha$.

% \begin{remark}
%     It can be noted from the proof above that the Dubins-Savage inequality can actually be derived from the Ville's inequality. It is tempting, thus, to regard the Dubins-Savage CS as a supermartingale-based method like the self-normalized CS and the Catoni-style CS. However, the nonnegative supermartingale appearing in the proof, $\{ Q(x_t) \}$, is not a \emph{test supermartingale} (see \cref{sec:testing}) because only one type-I error level, namely $\alpha = \frac{1}{1+ab}$, can be used. This limited role of supermartingales and Ville's inequality is why we put the method before formally introducing them in \cref{sec:ville}.
% \end{remark}

\section{Omitted proofs and additional propositions}\label{sec:pf}

\begin{proof}[Proof of \cref{thm:cids}] 
We apply \cref{lem:ds} to the two martingales \eqref{eqn:ds-martingale} with $a = (2/\alpha - 1)/b$. Then we have,
\begin{gather}\label{eqn:d-b-cs-pre}
    \Pr\left[\forall t \in \mathbb N^+, \; \sum_{i=1}^t \lambda_i (X_i -\mu) \le \frac{2/\alpha - 1}{b} + b \sum_{i=1}^t \lambda_i^2 \, \Exp[(X_i -\mu)^2  \vert \mathcal{F}_{i-1}] \right] \ge 1 - \frac{\alpha}{2};
    \\
    \Pr\left[\forall t \in \mathbb N^+, \; -\sum_{i=1}^t \lambda_i (X_i -\mu) \le \frac{2/\alpha - 1}{b} + b \sum_{i=1}^t \lambda_i^2 \, \Exp[(X_i -\mu)^2  \vert \mathcal{F}_{i-1}] \right] \ge 1 - \frac{\alpha}{2}.
\end{gather}
Using \cref{ass:var}, we then have,
\begin{gather}\label{eqn:d-b-cs}
    \Pr\left[\forall t \in \mathbb N^+, \; \sum_{i=1}^t \lambda_i (X_i -\mu) \le \frac{2/\alpha - 1}{b} + b \sigma^2 \sum_{i=1}^t \lambda_i^2 \right] \ge 1 - \frac{\alpha}{2};
    \\
    \Pr\left[\forall t \in \mathbb N^+, \; -\sum_{i=1}^t \lambda_i (X_i -\mu) \le \frac{2/\alpha - 1}{b} + b \sigma^2 \sum_{i=1}^t \lambda_i^2 \right] \ge 1 - \frac{\alpha}{2}.
\end{gather}
We remark here that the parameter $b$ in the inequalities above is actually redundant and can be eliminated (i.e., take $b = 1$), since tuning $b$ is equivalent to tuning the coefficients $\lambda_i$. To wit, multiplying $b$ by a constant $\lambda_0$ results in the same inequalities as dividing \emph{each} $\lambda_i$ by $\lambda_0$.

Putting $b = 1$ in the inequalities above and taking a union bound, we immediately arrive at the result.
\end{proof}

\begin{proof}[Proof of \cref{lem:sn-martingale}] It is known (see \citet[Proposition 12]{delyon2009exponential}) that $\exp(x - x^2/6) \le 1 + x + x^2/3$ for all $x \in \mathbb R$. Therefore,
    \begin{align*}
    & \Exp\left[\exp\left(\lambda_t (X_t - \mu) - \frac{\lambda^2_t ( (X_t - \mu)^2 + 2 \sigma^2 )}{6} \right) \middle \vert \mathcal{F}_{t-1} \right]
    \\
    = \, & \Exp\left[ \exp\left( \lambda_t (X_t - \mu) - \frac{\lambda^2_t  (X_t - \mu)^2 }{6} \right) \middle \vert \mathcal{F}_{t-1} \right] \cdot \exp\left(  - \frac{\lambda^2_t \sigma^2 }{ 3} \right)
    \\
    \le \, & \Exp\left[ 1 + \lambda_t(X_t - \mu) + \frac{\lambda^2_t}{3}(X_t - \mu)^2 \middle \vert \mathcal{F}_{t-1} \right] \cdot \exp\left(  - \frac{\lambda^2_t \sigma^2 }{ 3} \right)
    \\
    \le \, & \left( 1 + \frac{\lambda^2_t \sigma^2}{3} \right)  \cdot \exp\left(  - \frac{\lambda^2_t \sigma^2 }{ 3} \right) \le 1.
\end{align*}
It immediately follows from \cref{lem:mult-nsm} that $\{ \Msn_t\}$ is a nonnegative supermartingale.
\end{proof}

\begin{proof}[Proof of \cref{lem:anti-cisn}]
    Applying Ville's inequality (\cref{lem:vi}) on the nonnegative supermartingale \eqref{eqn:sn-nsm}, we have that
\begin{equation}
    \Pr\left[  \forall t \in \mathbb N^+, \; \sum_{i=1}^t \left(\lambda_t (X_t - \mu) - \frac{\lambda^2_t ( (X_t - \mu)^2 + 2 \sigma^2 )}{6} \right) \le \log(2/\alpha)  \right] \ge 1 - \frac{\alpha}{2}.
\end{equation}
Solving $\mu$ from the quadratic inequality
\begin{equation}
    \sum_{i=1}^t \left(\lambda_t (X_t - \mu) - \frac{\lambda^2_t ( (X_t - \mu)^2 + 2 \sigma^2 )}{6} \right) > \log(2/\alpha)
\end{equation}
yields the interval (each $\sum$ standing for $\sum_{i=1}^t$)
\begin{equation}\label{eqn:acisn+}
{
     \left[ \frac{ \left( \frac{\sum \lambda_i^2 X_i}{3} - \sum \lambda_i \right) \pm \sqrt{  \left( \frac{\sum \lambda_i^2 X_i}{3} - \sum \lambda_i \right) ^2 - 4  \frac{\sum \lambda_i ^2}{6}  \left( \log(2/\alpha) - \sum \lambda_i X_i  + \frac{\sum \lambda_i^2 (X_i^2 + 2\sigma^2)}{6} \right)  }    }{ \frac{\sum \lambda_i ^2}{3} }  \right],
}
\end{equation}
which then forms a $(1-\alpha/2)$-anticonfidence sequence for $\mu$, the $\aCISNp_{t}$ at issue. Another $(1-\alpha/2)$-anticonfidence sequence, $\aCISNn_{t}$, can be formed by replacing each $\lambda_t$ with $-\lambda_t$,
{
\begin{equation}\label{eqn:acisn-}
    \left[  \frac{ \left( \frac{\sum \lambda_i^2 X_i}{3} + \sum \lambda_i \right) \pm \sqrt{  \left( \frac{\sum \lambda_i^2 X_i}{3} + \sum \lambda_i \right) ^2 - 4  \frac{\sum \lambda_i ^2}{6}  \left( \log(2/\alpha) + \sum \lambda_i X_i  + \frac{\sum \lambda_i^2 (X_i^2 + 2\sigma^2)}{6} \right)  }    }{ \frac{\sum \lambda_i ^2}{3} } \right].
\end{equation}
}

\end{proof}

\begin{proof}[Proof of \cref{lem:cat-sm}] We observe that
    \begin{align}
    & \Exp\left[\exp \left( \pm \phi(\lambda_t (X_t  -\mu)) - \lambda^2_t \sigma^2/2 \right) \middle \vert \mathcal{F}_{t-1} \right]
    \\
    \le \, & \Exp\left[ 1 \pm \lambda_t(X_t - \mu) + \lambda^2_t (X_t - \mu)^2/2 \middle \vert \mathcal{F}_{t-1}  \right] \cdot \exp\left(  - \lambda^2_t \sigma^2/2  \right) \label{eqn:tighter-nsm}
    \\
    \le \, & (1 + \lambda^2_t \sigma^2/2) \exp(-\lambda^2_t \sigma^2/2) \le 1.
\end{align}
Hence $\{ \Mcat_t \}$ and $\{ \Ncat_t \}$ are both nonnegative supermartingales by \cref{lem:mult-nsm}.
\end{proof}

We have the following statement on the tightness of \cref{lem:cat-sm}, which states that the variance bound \cref{ass:var} is necessary for the processes $\{ \Mcat_t \}$ and $\{ \Ncat_t \}$ to be supermartingales --- the violation of \cref{ass:var} on any non-null set will prevent $\{ \Mcat_t \}$ and $\{ \Ncat_t \}$ from being supermartingales.
\begin{proposition}\label{prop:tightness-cat-nsm}
    Let $\{ Y_t \}$
 be a
 process that satisfies \cref{ass:mean}, but there exists a $t$ such that $\Exp[ (Y_t - \mu)^2 | \mathcal{F}_{t-1} ] = v_{t}$ and $S = \{ v_t > (1+2\kappa)\sigma^2 \}$ is a set in $ \mathcal{F}_{t-1}$ with non-zero $\Pr$-measure, where $\kappa > 0$. Further, suppose there exists a $\delta > 0$ such that $\Exp[ |Y_t - \mu|^{2+\delta} | \mathcal{F}_{t-1} ] < \infty$. Then, there exists a $\mathcal{F}_{t-1}$-measurable $\lambda_t$, such that
 \begin{equation}
    \Exp \left[\exp \left( \pm \phi(\lambda_t (X_t  -\mu)) - \lambda^2_t \sigma^2/2 \right) \middle \vert \mathcal{F}_{t-1} \right] > 1
 \end{equation}
 $\Pr$-a.s.\ on $S$.
\end{proposition}
\begin{proof}[Proof of \cref{prop:tightness-cat-nsm}]
Let $\eta$ be any real number in $(0, 1/2)$. There exists a positive number $x_b$ such that for any $\phi$ that is a Catoni-type influence function, when $|x| < x_\eta$, $\phi(x) \ge \log(1 + x + \eta x^2)$. Note that
\begin{equation}
    \Exp\left[\exp \left(\phi(\lambda (Y_t  -\mu)) \right) | \mathcal{F}_{t-1} \right] \le 1 + \frac{1}{2}\lambda^2 v_t.
\end{equation}
And we have
\begin{align}
    &\liminf_{\lambda \to 0} \lambda^{-2}\left( \Exp\left[\exp \left(\phi(\lambda (Y_t  -\mu)) \right)  |\mathcal{F}_{t-1} \right] - 1 \right)
    \\
    \ge & \liminf_{\lambda \to 0} \lambda^{-2}\left( \Exp\left[ \id_{\{|\lambda(Y_t - \mu)| < x_\eta\}} \exp \left(\phi(\lambda (Y_t  -\mu)) \right)  |\mathcal{F}_{t-1} \right] -1 \right)
    \\
    \ge & \liminf_{\lambda \to 0}  \lambda^{-2}\left( \Exp \left[\id_{\{ |\lambda(Y_t - \mu)| < x_\eta \}} (1 + \lambda(Y - \mu) +  \eta\lambda^2 (Y_t - \mu)^2)  |\mathcal{F}_{t-1} \right] -1 \right)
    \\
    \ge & \liminf_{\lambda \to 0}  \lambda^{-2}( \Pr[ |\lambda(Y_t - \mu)| < x_\eta  |\mathcal{F}_{t-1} ] - 1) + \liminf_{\lambda \to 0} \lambda^{-2} \Exp \left[\id_{\{ |\lambda(Y_t - \mu)| < x_\eta \}} \lambda(Y_t - \mu)  |\mathcal{F}_{t-1} \right] + \\ & \liminf_{\lambda \to 0} \lambda^{-2} \Exp \left[\id_{\{ |\lambda(Y_t - \mu)| < x_\eta \}}\eta\lambda^2 (Y_t - \mu)^2   |\mathcal{F}_{t-1} \right] 
    \\
    = & 0 + 0 + \eta v_{t} = \eta  v_{t} .
\end{align}
The first two limits inferior above are 0 since $Y_t$ has finite conditional $(2+\delta)$\textsuperscript{th} moment.

Since $\eta$ is arbitrary in $(0, 1/2)$ it follows that
\begin{equation}\label{eqn:liminf-lambda}
\liminf_{\lambda \to 0} \lambda^{-2}\left( \Exp\left[\exp \left(\phi(\lambda (Y_t  -\mu)) \right)  | \mathcal{F}_{t-1}\right] - 1 \right) \ge  \frac{v_t}{2}.    
\end{equation}
(Actually it is not hard to see that the inequality above is equality.) Now, recall that $v_t > (1+2\kappa) \sigma^2$ on the set $S \in \mathcal{F}_{t-1}$. By \eqref{eqn:liminf-lambda} there exists some $\lambda_0$ such that when $\lambda < \lambda_0$, $\Exp\left[\exp \left(\phi(\lambda (Y_t  -\mu)) \right) | \mathcal{F}_{t-1} \right] \ge 1 + \frac{1}{2}\lambda^2 (1+\kappa)\sigma^2$ on $S$.

Let $g(\kappa)$\footnote{One may express $g$ by the Lambert $W$ function.} be the unique positive zero of $\e^x - 1-(1+\kappa)x$; so when $x \in (0, g(\kappa))$, $1+(1+\kappa)x > \e^{x}$.  Hence, when $0 < \lambda \le \min \{ \lambda_0, \sqrt{2g(\kappa)}/\sigma \}$,
\begin{equation}
    \Exp\left[\exp \left(\phi(\lambda (Y_t  -\mu)) \right) | \mathcal{F}_{t-1} \right]  \ge  1 + \frac{1}{2}\lambda^2 (1+\kappa)\sigma^2 >\exp\left(\frac{1}{2}\lambda^2 \sigma^2\right)
\end{equation}
on $S$. The case for $ \Exp\left[\exp \left(-\phi(\lambda (Y_t  -\mu)) \right) | \mathcal{F}_{t-1} \right]$ is analogous.
 \end{proof}

\begin{proof}[Proof of \cref{thm:catoni-tight}]
We define $f_t(m)$ to be the random function
\begin{equation}
    f_t(m) = \sum_{i=1}^t  \phi(\lambda_i (X_i  - m ))
\end{equation}
in \eqref{eqn:catoni-cs}, which is always strictly decreasing in $m$.
First, for all $m \in \mathbb R$, let
\begin{equation}
M_t(m) = \prod_{i=1}^t \exp \left( \phi(\lambda_i (X_i - m)) - \lambda_i (\mu - m) - \frac{\lambda_i ^ 2}{2}(\sigma^2 + (\mu - m)^2))  \right). 
\end{equation}
Let $v_t = \Exp[(X_t - \mu)^2 \vert \mathcal{F}_{t-1}]$. Note that
\begin{align*}
    & \Exp \left[ \exp \left( \phi(\lambda_t (X_t - m)) - \lambda_t (\mu - m) - \frac{\lambda_t ^ 2}{2}(\sigma^2 + (\mu - m)^2)  \right) \middle \vert \mathcal{F}_{t-1} \right]
    \\
    = & \Exp\left[ \exp (\phi(\lambda_t (X_t - m))) \middle \vert \mathcal{F}_{t-1} \right ] \exp\left(- \lambda_t (\mu - m)  - \frac{\lambda_t ^ 2}{2}(\sigma^2 + (\mu - m)^2) \right)
    \\
    \le & \Exp\left[  1 + \lambda_t (X_t - m) + \frac{\lambda_t^2}{2}(X_t - m)^2   \middle \vert \mathcal{F}_{t-1} \right ] \exp\left(- \lambda_t (\mu - m)  - \frac{\lambda_t ^ 2}{2}(\sigma^2 + (\mu - m)^2) \right)
    \\
    = & \left( 1 + \lambda_t(\mu - m) + \frac{\lambda_t^2  }{2}(v_t + (\mu - m)^2) \right) \exp\left(- \lambda_t (\mu - m)  - \frac{\lambda_t ^ 2}{2}(\sigma^2 + (\mu - m)^2) \right)
    \\
    \le & \left( 1 + \lambda_t(\mu - m) + \frac{\lambda_t^2  }{2}(\sigma^2+ (\mu - m)^2) \right) \exp\left(- \lambda_t (\mu - m)  - \frac{\lambda_t ^ 2}{2}(\sigma^2 + (\mu - m)^2) \right)
    \\
    \le & \ 1.
\end{align*}
Hence, again due to \cref{lem:mult-nsm}, $\{ M_t(m) \}$ is a nonnegative supermartingale. Note that $\{ M_t(\mu) \}$ is just the Catoni supermartingale $\{ \Mcat_t \}$ defined in \eqref{eqn:cat-sm}.
We hence have $\Exp M_t(m) \le 1$; that is,
\begin{equation}
    \Exp \exp(f_t(m)) \le \exp \left( \sum_{i=1}^t \left(  \lambda_i (\mu - m) + \frac{\lambda_i^2}{2}(\sigma^2 + (\mu - m)^2) \right) \right).
\end{equation}
Similarly, define $N_t (m) = \prod_{i=1}^t \exp \left( - \phi(\lambda_i (X_i - m)) + \lambda_i (\mu - m) - \frac{\lambda_i ^ 2}{2}(\sigma^2 + (\mu - m)^2))  \right)$ and it will also be a nonnegative supermartingale for all $m \in \mathbb R$, with
\begin{equation}
     \Exp \exp(-f_t(m)) \le \exp \left( \sum_{i=1}^t \left( - \lambda_i (\mu - m) + \frac{\lambda_i^2}{2}(\sigma^2 + (\mu - m)^2) \right) \right).
\end{equation}

Define the functions
\begin{align}
     B^+_t(m) = & \sum_{i=1}^t \left(  \lambda_i (\mu - m) + \frac{\lambda_i^2}{2}(\sigma^2 + (\mu - m)^2) \right) + \log(2/\varepsilon)
     \\
     = & \sum_{i=1}^t \frac{\lambda_i^2}{2}(m - \mu)^2 - \sum_{i=1}^t \lambda_i (m - \mu) + \sum_{i=1}^t \frac{\lambda_i^2 \sigma^2}{2} + \log(2/\varepsilon);
     \\
     B^-_t(m) = & \sum_{i=1}^t \left(  \lambda_i (\mu - m) - \frac{\lambda_i^2}{2}(\sigma^2 + (\mu - m)^2) \right) - \log(2/\varepsilon)
     \\
     = & -\sum_{i=1}^t \frac{\lambda_i^2}{2}(m - \mu)^2 - \sum_{i=1}^t \lambda_i (m - \mu) - \sum_{i=1}^t \frac{\lambda_i^2 \sigma^2}{2} - \log(2/\varepsilon).
\end{align}
By Markov's inequality,
\begin{gather}
     \forall m \in \mathbb R, \; \Pr[ f_t (m) \le B^+_t(m) ] \ge 1- \varepsilon/2, \label{eqn:func-markov-bound}
     \\
      \forall m \in \mathbb R, \; \Pr[ f_t (m) \ge B^-_t(m) ] \ge 1- \varepsilon/2.
\end{gather}
Let us now consider the equation
\begin{equation}\label{eqn:qd-pi-t}
    B^+_t(m) =  - \sum_{i=1}^t \frac{\lambda_i^2 \sigma^2}{2} - {\log(2/\alpha)},
\end{equation}
which, by rearrangement, can be written as (each $\sum$ standing for $\sum_{i=1}^t$)
\begin{equation}
    \left(\sum \lambda_i^2 /2 \right)(m - \mu)^2 - \left(\sum \lambda_i \right) (m - \mu) + \left(\sum \lambda_i^2 \sigma^2 + \log(2/\varepsilon) + \log(2/\alpha)\right) = 0.
\end{equation}
As a quadratic equation, it has solutions if and only if 
\begin{equation}
     \left(\sum \lambda_i \right)^2 - 4\left(\sum \lambda_i^2 /2 \right) \left(\sum \lambda_i^2 \sigma^2 + \log(2/\varepsilon) + \log(2/\alpha)\right) \ge 0,
\end{equation}
which is just the condition \eqref{eqn:quadratic-condition}.
Let $m=\pi_t$ be the smaller solution of \eqref{eqn:qd-pi-t}. Since $\{\lambda_t\}$ is assumed to be non-random, the quantity $\pi_t$ is also non-random. Then, we can put $m=\pi_t$ into \eqref{eqn:func-markov-bound},
\begin{equation}
   \Pr\left[ f_t (\pi_t) \le - \sum_{i=1}^t \frac{\lambda_i^2 \sigma^2}{2} - {\log(2/\alpha)} \right] \ge 1- \varepsilon/2.
\end{equation}
Note that $f_t(\max(\CICT_t)) = - \sum_{i=1}^t \frac{\lambda_i^2 \sigma^2}{2} - {\log(2/\alpha)}$. Hence
\begin{equation}
     \Pr[ f_t(\pi_t) \le f_t(\max(\CICT_t))  ] \ge 1- \varepsilon/2,
\end{equation}
which indicates that
\begin{equation}\label{eqn:up-cict-t-bound}
    \Pr[ \max(\CICT_t) \le \pi_t ] \ge 1- \varepsilon/2.
\end{equation}

Now notice that
\begin{align}
   \pi_t  &= \mu + \frac{ \sum \lambda_i - \sqrt{ (\sum \lambda_i)^2 - 4(\sum \lambda_i^2 /2)(\sum \lambda_i^2 \sigma^2 + \log(2/\varepsilon) + \log(2/\alpha))   } }{2 \sum \lambda_i^2 /2}
    \\
    & =  \mu + \frac{ \sum \lambda_i  - (\sum \lambda_i)\sqrt{1 - \frac{4(\sum \lambda_i^2 /2)(\sum \lambda_i^2 \sigma^2 + \log(2/\varepsilon) + \log(2/\alpha))}{(\sum \lambda_i)^2 } }  }{2 \sum \lambda_i^2 /2}  
    \\
    & \le \mu + \frac{ \sum \lambda_i  - (\sum \lambda_i)(1 - \frac{4(\sum \lambda_i^2 /2)(\sum \lambda_i^2 \sigma^2 + \log(2/\varepsilon) + \log(2/\alpha))}{(\sum \lambda_i)^2 } )  }{2 \sum \lambda_i^2 /2}  
    \\
    & = \mu + \frac{ 2(\sum \lambda_i^2 \sigma^2 + \log(2/\varepsilon) + \log(2/\alpha)) }{\sum \lambda_i }. \label{eqn:pi-t-bound}
\end{align}
Combining \eqref{eqn:up-cict-t-bound} and \eqref{eqn:pi-t-bound} gives us the one-sided concentration
\begin{equation}\label{eqn:one-sided}
    \Pr\left[ \max(\CICT_t) \le  \mu + \frac{ 2(\sum \lambda_i^2 \sigma^2 + \log(2/\varepsilon) + \log(2/\alpha)) }{\sum \lambda_i } \right] \ge 1 - \varepsilon / 2.
\end{equation}

Now, let $\rho_t$ be the larger solution of
\begin{equation}
     B^-_t(m) =  \sum_{i=1}^t \frac{\lambda_i^2 \sigma^2}{2} + {\log(2/\alpha)}.
\end{equation}
A similar analysis yields
\begin{equation}
     \Pr[ \min(\CICT_t) \ge \rho_t ] \ge 1- \varepsilon/2,
\end{equation}
and
\begin{align}
    \rho_t &= \mu + \frac{ -\sum \lambda_i + \sqrt{(\sum \lambda_i)^2  - 4(\sum \lambda_i^2 /2)(\sum \lambda_i^2 \sigma^2 + \log(2/\varepsilon) + \log(2/\alpha))  } }{2\sum \lambda_i^2 / 2}
    \\
    & \ge  \mu  - \frac{ 2(\sum \lambda_i^2 \sigma^2 + \log(2/\varepsilon) + \log(2/\alpha)) }{\sum \lambda_i }.
\end{align}
Hence we have the other one-sided concentration
\begin{equation}\label{eqn:other-one-sided}
    \Pr\left[ \min(\CICT_t) \ge  \mu - \frac{ 2(\sum \lambda_i^2 \sigma^2 + \log(2/\varepsilon) + \log(2/\alpha)) }{\sum \lambda_i }\right] \ge 1 - \varepsilon / 2.
\end{equation}

Hence, a union bound on \eqref{eqn:one-sided} and \eqref{eqn:other-one-sided} gives rise to the concentration on the interval width we desire,
\begin{equation}
    \Pr\left[ \max(\CICT_t) - \min(\CICT_t) \le \frac{ 4(\sum \lambda_i^2 \sigma^2 + \log(2/\varepsilon) + \log(2/\alpha)) }{\sum \lambda_i }\right] \ge 1 - \varepsilon.
\end{equation}
This concludes the proof.
\end{proof}

Before we prove \cref{cor:loglogt}, we review the technique of stitching as appeared in \citet[Section 3.1]{howard2021time}. Let $\{ Y_t \}$ be an i.i.d.\ sequence of random variable of mean $\mu$ and subGaussian with variance factor 1. Then, for any $\lambda \in \mathbb R$, the following process is a nonnegative supermartingale, 
\begin{equation}
    \exp\left\{\Lambda (Y_1 + \dots + Y_t - t\mu) - \Lambda^2 t/2 \right \},
\end{equation}
which, in conjunction with Ville's inequality, yields the following ``linear boundary'' confidence sequence,
\begin{equation}
 \mathrm{CI}_t^{\mathsf{lin}}(\Lambda, \alpha) = \left[\widehat{\mu}_t \pm \frac{t \Lambda^2 /2 + \log(2/\alpha)}{t\Lambda} \right].
\end{equation}
The idea of \cite{howard2021time} is to divide $\alpha = \sum_{j=0}^\infty \alpha_j$, take some sequences $\{\Lambda_j\}$ and $\{ t_j \}$ ($t_0=1$), and consider the following CS:
\begin{equation}\label{eqn:stitched-def}
    \CI^{\mathsf{stch}}_t =  \mathrm{CI}_t^{\mathsf{lin}}(\Lambda_j, \alpha_j), \quad \text{when } t_j \le t < t_{j+
    1},
\end{equation}
which is indeed a $(1-\alpha)$-CS due to union bound. \cite{howard2021time} shows that using \emph{geometrically spaced} epochs $\{t_j\}$, the lower bound $\sqrt{\log \log t/ t}$ of the law of the iterated logarithm can be matched. We prove a slightly different bound than \citet[Equation (11)]{howard2021time} below.

\begin{lemma}\label{prop:stitch} Let $t_j = \e^j$, $\alpha_j = \frac{\alpha}{(j+2)^2}$, and $\Lambda_j = \sqrt{ \log(2/\alpha_j)  \e^{ - j} }$. Note that $\sum_{j=0}^\infty \alpha_j < \alpha$.
Then,
\begin{equation}
    \frac{t \Lambda^2_j /2 + \log(2/\alpha_j)}{t\Lambda_j}  \le (1+\e)\sqrt{ \frac{\log(2/\alpha) + 2\log\log \e^2 t}{t} } , \quad \text{when } t_j \le t < t_{j+
    1}.
\end{equation}
This implies that the stitched CS \eqref{eqn:stitched-def} enjoys the following closed-form expression,
\begin{equation}
    \CI^{\mathsf{stch}}_t \subseteq \left[ \widehat{\mu}_t \pm   (1+\e)\sqrt{ \frac{\log(2/\alpha) + 2\log\log \e^2 t}{t} }  \right].
\end{equation}
\end{lemma}
\begin{proof}[Proof of \cref{prop:stitch}] We have
\begin{align}
    &\frac{t \Lambda^2_j /2 + \log(2/\alpha_j)}{t\Lambda_j}  = \frac{ \log(2/\alpha_j) (t\e^{-j} +1)  }{t \sqrt{\log(2/\alpha_j) \e^{-j} } } \le \frac{ \log(2/\alpha_j) (\e +1)  }{t \sqrt{\log(2/\alpha_j) t^{-1} } } = (\e + 1) \sqrt{ \frac{\log(2/\alpha_j)}{t} } 
    \\
    = &  (\e + 1) \sqrt{ \frac{\log(2/\alpha) + 2\log(j+2)}{t} }  \le (\e + 1) \sqrt{ \frac{\log(2/\alpha) + 2\log(\log t+2)}{t} }.
\end{align}
\end{proof}

This ingredient of stitching leads to our \cref{cor:loglogt}.
\begin{proof}[Proof of \cref{cor:loglogt}] 
Applying \cref{thm:catoni-tight}, \eqref{eqn:one-sided} and \eqref{eqn:other-one-sided} to the case where $\lambda_t = \Lambda$, we see that as long as 
\begin{equation}\label{eqn:loglog-condition-t}
    \left( \frac{1}{2} - \Lambda^2 \right) t \ge \log(2/\varepsilon) + \log(2/\alpha),
\end{equation}
we have that
\begin{equation}
    \CICT_t(\Lambda, \alpha) \subseteq \left[ \mu \pm \frac{2(t \Lambda^2 + \log(2/\alpha) + \log(2/\varepsilon))}{t\Lambda} \right] \quad \text{with probability at least } 1- \varepsilon.
\end{equation}
Using the same $t_j = \e^j$, $\alpha_j = \frac{\alpha}{(j+2)^2}$, and $\Lambda_j = \sqrt{ \log(2/\alpha_j)  \e^{ - j} }$ as \cref{prop:stitch},
we see that (taking $\varepsilon = \alpha_0 = \alpha/4$)
\begin{equation}
   \frac{2(t \Lambda^2_j + \log(2/\alpha_j) + \log(2/\varepsilon))}{t\Lambda_j} \le \frac{4(t \Lambda^2_j /2 + \log(2/\alpha_j))}{t\Lambda_j} \le 4(\e + 1) \sqrt{ \frac{\log(2/\alpha) + 2\log\log\e^2 t}{t} } 
\end{equation}
when $t_j \le t < t_{j+
    1}$.
    
Further, we see that
\begin{equation}
    \left( \frac{1}{2} - \Lambda^2_j \right) t_j \ge \log(2/\varepsilon) + \log(2/\alpha_j)
\end{equation}
holds as long as $\e^j > \polylog(1/\alpha)$. Hence \eqref{eqn:loglog-condition-t} is met when $t > \polylog(1/\alpha)$. Combining all above we arrive at the desired conclusion.
\end{proof}

\begin{proof}[Proof of \cref{prop:gaussianlowerbound}]
Let $\htheta_t = \frac{\lw_t + \up_t}{2}$ and $r_t = \frac{w^{Q,\varepsilon}}{2}$. Due to union bound, for any $Q \in \mathcal{Q}_{\sigma^2}$,
\begin{equation}
    \Prw_{\{X_i\} \sim Q} \left[  \htheta_t \le \mu_Q - r_t  \right] \le \frac{\alpha}{2} + \varepsilon, \quad  \Prw_{\{X_i\} \sim Q} \left[  \htheta_t \ge \mu_Q + r_t  \right] \le \frac{\alpha}{2} + \varepsilon.
\end{equation}
Now by \citet[Proposition 6.1]{catoni2012challenging} (note that $r_t$ here is a data-independent constant), there exists $\mu_0 \in \mathbb R$ such that when $X_i \overset{\mathrm{iid}}{\sim}  \mathcal{N}(\mu_0, \sigma^2)$,
\begin{equation}
     \Pr \left[  \htheta_t \ge \mu_0 + r_t  \right] \ge  \Pr \left[  \hmu_t \ge \mu_0 + r_t  \right], \text{or }  \Pr \left[  \htheta_t \le \mu_0 - r_t  \right] \ge  \Pr \left[  \hmu_t \le \mu_0 - r_t  \right].
\end{equation}
Without loss of generality suppose the latter holds. Surely $\mathcal{N}(\mu_0, \sigma^2)^{\otimes \mathbb {N}^+} \in \mathcal{Q}_{\sigma^2}$. We see that
\begin{equation}
 \frac{\alpha}{2}+\varepsilon \ge  \Pr \left[  \htheta_t \le \mu_0 - r_t  \right] \ge  \Pr \left[  \hmu_t \le \mu_0 - r_t  \right],
\end{equation}
indicating that
\begin{equation}
    r_t \ge \frac{\sigma}{\sqrt{t}}\Phi^{-1}(1-\alpha/2 - \varepsilon).
\end{equation}
This shows that
\begin{equation}
    w^{\mathcal{N}(\mu_0, \sigma^2)^{\otimes \mathbb {N}^+}, \varepsilon}_t \ge \frac{2\sigma}{\sqrt{t}}\Phi^{-1}(1-\alpha/2 - \varepsilon)
\end{equation}
holds for \emph{any} tail-symmetric CI, which clearly imlpies the minimax lower bound.
\end{proof}

\begin{proof}[Proof of \cref{prop:lil}]
By the law of the iterated logarithm,
\begin{equation}
    \limsup_{t \to \infty} \frac{|\hmu_t - \mu|}{\sigma}\sqrt{\frac{t}{2\log\log t}} = 1,\; \text{almost surely.}
\end{equation}
With probability at least $1-\alpha$, $\hmu_t, \mu \in \CI_t$ for every $t$, which implies that $|\CI_t|\ge |\hmu_t - \mu|$ for every $t$.
Hence, with probability at least $1-\alpha$,
\begin{equation}
    \limsup_{t \to \infty} \frac{|\CI_t|}{\sigma}\sqrt{\frac{t}{2\log\log t}} \ge 1.
\end{equation}
\end{proof}

\begin{lemma}\label{lem:comparison}
Let $\{ R_t^m \}$ and $\{ S_t^m \}$ be two families of nonnegative adapted processes indexed by $m \in \mathbb R$, among which $\{ R_t^\mu \}$ and $\{ S_t^\mu \}$ are supermartingales with $R_0^\mu = S_0^\mu = 1$. If almost surely for any $m \in \mathbb R$, $R_t^m \ge S_t^m$, then the $(1-\alpha)$-CSs
\begin{equation}
    \CI_{R, t} = \{ m : R_t^m \le 1/\alpha  \}, \quad \CI_{S, t} = \{ m : S_t^m \le 1/\alpha  \} 
\end{equation}
satisfy
\begin{equation}
    \CI_{R, t} \subseteq \CI_{S, t} \quad \text{a.s.}
\end{equation}
\end{lemma}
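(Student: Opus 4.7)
The plan is to observe that this statement has essentially two parts: first, justifying that $\{\CI_{R,t}\}$ and $\{\CI_{S,t}\}$ are indeed $(1-\alpha)$-confidence sequences; and second, establishing the pointwise inclusion $\CI_{R,t} \subseteq \CI_{S,t}$. The first part is a direct invocation of Ville's inequality applied to the nonnegative supermartingales $\{R_t^\mu\}$ and $\{S_t^\mu\}$ (noting $R_0^\mu = S_0^\mu = 1$), which gives $\Pr[\exists t \in \mathbb N^+, R_t^\mu > 1/\alpha] \le \alpha$ and the analogous inequality for $S_t^\mu$. This yields $\Pr[\forall t, \mu \in \CI_{R,t}] \ge 1-\alpha$ and similarly for $\CI_{S,t}$.

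For the second part, I would work on the almost-sure event $\Omega_0$ on which the pointwise domination $R_t^m(\omega) \ge S_t^m(\omega)$ holds for every $t \in \mathbb N^+$ and every $m \in \mathbb R$ (this event has probability $1$ by the assumption of the lemma). On $\Omega_0$, if $m \in \CI_{R,t}$, then by definition $R_t^m \le 1/\alpha$, and hence $S_t^m \le R_t^m \le 1/\alpha$, so $m \in \CI_{S,t}$. This gives $\CI_{R,t} \subseteq \CI_{S,t}$ almost surely, as claimed.

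There is no real obstacle here: the argument is essentially a tautology once one unfolds the definition of the confidence sets via the level sets of the processes. The only subtlety worth flagging is measurability of the ``for every $m \in \mathbb R$'' quantifier inside the almost-sure statement. In practice this is not an issue for the concrete applications in this paper (e.g., \eqref{eqn:tighter-sm1}--\eqref{eqn:tighter-sm2} vs.\ \eqref{eqn:cat-sm}), because the inequality $R_t^m \ge S_t^m$ holds \emph{deterministically} from the functional form of the processes — it follows from the elementary bound $\exp(\phi(x)) \le 1 + x + x^2/2$ used in the proof of \cref{lem:cat-sm} — so the ``almost surely for any $m$'' clause can be read as ``for every $\omega$ and every $m$'', and the inclusion is pathwise rather than merely almost sure.
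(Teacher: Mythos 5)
Your proof is correct and follows essentially the same two-step route as the paper: Ville's inequality gives the confidence-sequence property, and the pointwise domination $S_t^m \le R_t^m \le 1/\alpha$ on the relevant almost-sure event gives the inclusion. The additional remark about measurability and the deterministic nature of the domination in the concrete application is a sensible aside but not needed for the argument.
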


\begin{proof}[Proof of \cref{lem:comparison}]
First, by Ville's inequality (\cref{lem:vi}) we see that $\{ \CI_{R, t}\}$ and $\{ \CI_{R, t}\}$ are indeed confidence sequences for $\mu$. Then, almost surely, for all $m \in \CI_{R, t}$,
\begin{equation}
   S_t^m \le  R_t^m \le 1/\alpha,
\end{equation}
which implies that $m \in \CI_{S, t} $. Hence almost  surely $ \CI_{R, t} \subseteq \CI_{S, t}$.
\end{proof}

Before we prove \cref{thm:infvar-tight}, let us introduce three lemmas, which are also used to prove the infinite variance case in the recent follow-up work on \emph{robust} Catoni-style confidence sequences, \cite{wang2023huber}. Our proof of \cref{thm:infvar-tight} is also inspired by \cite[Proof of Theorem 5]{wang2023huber}, which in turns roughly follows the proof of \cref{thm:catoni-tight} in this paper.

\begin{lemma}[Lemma 8 of \cite{wang2023huber}, Appendix A]\label{lem:p-poly-zero}
     Let $p \in (1,2]$, $C > 0$, and $
    g(y) = y^p - y + C$.  Suppose there is a $c > 0$ such that $C = \left(\frac{c}{(1+c)^p} \right)^{1/(p-1)}$. Then $g((1+c)C)=0$.
\end{lemma}

\begin{lemma}[Lemma 9 of \cite{wang2023huber}, Appendix A]\label{lem:p-poly-zero-2}
    Let $p \in (1,2]$, $A,B,C>0$, and $
    g(x) = Ax^p - Bx + C$. Suppose there is a $c>0$ such that $ A^{1/(p-1)} B^{-p/(p-1)} C =  \left(\frac{c}{(1+c)^p} \right)^{1/(p-1)}$.
    Then $g$ has a positive zero $(1+c) B^{-1} C $.
\end{lemma}

\begin{lemma}[One-dimensional special case of  Lemma 7 of \cite{wang2021convergence}, Appendix A]\label{lem:neurips-expand-lemma} Let $p \in (1,2]$. For any real $x$ and $y$, $|x+y|^p \le |x|^p + 4|y|^p + py|x|^{p-1} \sg(x)$.
\end{lemma}

The first two lemmas are proved by direct substitution; the third by Taylor expansion. We refer the reader to the works cited for their proof. Now we are ready to prove \cref{thm:infvar-tight}.

\begin{proof}[Proof of \cref{thm:infvar-tight}] We define $f_{pt}(m)$ to be the random function
\begin{equation}
    f_{pt}(m) = \sum_{i=1}^t \phi_p(\lambda_i(X_i - m)).
\end{equation}
    Consider the process
    \begin{equation}
        {\Mcat_t}^{p}(m) = \prod_{i=1}^t \exp \left(\phi_p(\lambda_i (X_i  - m)) - \lambda_i(\mu - m) - \frac{\lambda^p_i}{p} (4 v_i + |\mu - m|^p) \right).
    \end{equation}
    Note that
    \begin{align}
        & \Exp\left[  \exp \left(\phi(\lambda_t (X_t  -m)) - \lambda_t(\mu - m) - \frac{\lambda^p_t}{p} (4 v_t + |\mu - m|^p) \right) \middle \vert \mathcal{F}_{t-1}  \right]
        \\
        = & \Exp\left[  \exp \left(\phi(\lambda_t (X_t  - m))  \right) \middle \vert \mathcal{F}_{t-1}  \right] \exp\left( - \lambda_t(\mu - m) - \tfrac{\lambda^p_t}{p} (4 v_t + |\mu - m|^p) \right)
        \\
        \le & \Exp[ 1 + \lambda_t (X_t-m) + \tfrac{1}{p}\lambda^p_t|X_t - m|^p  | \mathcal{F}_{t-1}] \exp\left( - \lambda_t(\mu - m) - \tfrac{\lambda^p_t}{p} (4 v_t + |\mu - m|^p) \right)
        \\
         & \text{(by \cref{lem:neurips-expand-lemma})}
        \\
        \le &  \frac{\Exp[ 1 + \lambda_t (X_t-m) + \tfrac{1}{p}\lambda^p_t ( |\mu - m|^p + 4|X_t - \mu|^p + p(X_t - \mu) |\mu - m|^{p-1} \sg(\mu - m) )  | \mathcal{F}_{t-1}] } {\exp\left(  \lambda_t(\mu - m) + \tfrac{\lambda^p_t}{p} (4 v_t + |\mu - m|^p) \right) }
        \\
        =  &  \frac{  1 + \lambda_t (\mu-m) + \tfrac{1}{p}\lambda^p_t ( |\mu - m|^p + 4\Exp[|X_t - \mu|^p | \mathcal{F}_{t-1}]  ) } {\exp\left(  \lambda_t(\mu - m) + \tfrac{\lambda^p_t}{p} (4 v_t + |\mu - m|^p) \right) }
        \\
        \le &  \frac{  1 + \lambda_t (\mu-m) + \tfrac{1}{p}\lambda^p_t ( |\mu - m|^p + 4v_t^2  ) } {\exp\left(  \lambda_t(\mu - m) + \tfrac{\lambda^p_t}{p} (4 v_t + |\mu - m|^p) \right) } \le 1.
    \end{align}
    $\{{\Mcat_t}^{p}(m)\}$ is thus a nonnegative supermartingale issued at 1. Note that this time, unlike the finite variance case, $\{{\Mcat_t}^{p}(\mu)\}$ does \emph{not} equal $\{{\Mcat_t}^{p}\}$. We thus have $\Exp{\Mcat_t}^{p}(m) \le 1$; that is,
    \begin{equation}
        \Exp \exp(f_{pt}(m)) \le \exp\left(  \sum_{i=1}^t \left( \lambda_i(\mu - m) + \frac{\lambda^p_i}{p} (4 v_i + |\mu - m|^p) \right) \right).
    \end{equation}
Define the function
\begin{equation}
    B_{pt}(m) = \sum_{i=1}^t \left( \lambda_i(\mu - m) + \frac{\lambda^p_i}{p} (4 v_i + |\mu - m|^p) \right) + \log(2/\varepsilon).
\end{equation}
By Markov's inequality,
\begin{equation}\label{eqn:markov-p}
    \forall m \in \mathbb R, \ \Pr[ f_{pt}(m) \le  B_{pt}(m) ] \ge 1- \varepsilon/2.
\end{equation}
Let us now consider the equation
\begin{equation}\label{eqn:zero-eqn-p-real}
    B_{pt}(m) = - \frac{\sum_{i=1}^t  v_i \lambda_i^p }{p} - \log(2/\alpha),
\end{equation}
which, by rearrangement, can be written as (each $\sum$ standing for $\sum_{i=1}^t$),
\begin{equation}\label{eqn:zero-eqn-p}
    \left( \sum \lambda_i^p/p \right) |m - \mu|^p -  \left( \sum \lambda_i \right)(m - \mu) + \left( \sum 5 \lambda_i^p v_i/ p + \log(2/\varepsilon) + \log(2/\alpha) \right) = 0.
\end{equation}
By \cref{lem:p-poly-zero-2}, it has a positive zero if there exists a $c>0$ such that
\begin{equation}
   \left( \sum \lambda_i^p/p \right)^{1/(p-1)} \left( \sum \lambda_i \right)^{-p/(p-1)} \left( \sum 5 \lambda_i^p v_i/ p + \log(2/\varepsilon) + \log(2/\alpha) \right) = \left( \frac{c}{(1+c)^p} \right)^{1/(p-1)}.
\end{equation}
By elementary calculus, the function $\frac{x}{(1+x)^p}$ ($x > 0$) takes its maximum $J = \frac{(p-1)^{p-1}}{p^p}$ at $x_p = \frac{1}{p-1}$. Therefore, due to the assumption \eqref{eqn:quadratic-condition-p}, the left hand side of the equation above is smaller than the maximum of the right hand, meaning that such $c$ does exist, and is smaller than $x_p = \frac{1}{p-1}$. So, the equation \eqref{eqn:zero-eqn-p}, i.e.\ the equation \eqref{eqn:zero-eqn-p-real}, has a positive zero
\begin{align}
    &\pi_t = \mu + (1+c) \left( \sum \lambda_i \right)^{-1} \left( \sum 5 \lambda_i^p v_i/ p + \log(2/\varepsilon) + \log(2/\alpha) \right)
    \\
    \le & \mu + \frac{p}{p-1} \frac{  \sum 5 \lambda_i^p v_i/ p + \log(2/\varepsilon) + \log(2/\alpha)  }{\sum \lambda_i }  =\mu +  \frac{  \sum 5 \lambda_i^p v_i + p\log(2/\varepsilon) + p\log(2/\alpha)  }{\sum \lambda_i}.
\end{align}
Let us put $m = \pi_t$ into \eqref{eqn:markov-p}, obtaining
\begin{equation}
    \Pr\left[ f_{pt}(\pi_t) \le   - \frac{\sum_{i=1}^t  v_i \lambda_i^p }{p} - \log(2/\alpha)   \right] \ge 1 - \varepsilon/2,
\end{equation}
which in turns implies, with probability at least $1-\varepsilon/2$,
\begin{equation}
     \max({\CICT_t}^p) \le \pi_t \le \mu +  \frac{  \sum 5 \lambda_i^p v_i + p\log(2/\varepsilon) + p\log(2/\alpha)  }{\sum \lambda_i}.
\end{equation}
The other side of the concentration similarly follows. So
\begin{equation}
    \Pr\left[ \max({\CICT_t}^p) - \min({\CICT_t}^p) \le \frac{  \sum 10 \lambda_i^p v_i + 2p\log(2/\varepsilon) + 2p\log(2/\alpha)  }{\sum \lambda_i} \right],
\end{equation}
concluding the proof.\end{proof}

\section{Heteroscedastic Dubins-Savage and self-normalized CS}\label{sec:het-sce}
Suppose in this section that instead of \cref{ass:var}, the following assumption holds.
\begin{assumption}\label{ass:varhet}
    The process is conditionally square-integrable with an upper bound, known \emph{a priori}, on the conditional variance:
    \begin{equation}\label{eqn:var-bound-het-reprise}
        \forall t\in\mathbb N^+, \quad \Exp[(X_t -\mu)^2  \mid \mathcal{F}_{t-1}] \le \sigma^2_t,
    \end{equation}
    where $\{\sigma_t\}_{t \in \mathbb N^+}$ is a predictable, nonnegative process.
\end{assumption}

%We borrow the $S_t^{k,\ell} = \sum_{i=1}^t \lambda_i^k X_i^\ell$ notation \eqref{eqn:skl-sn}, and further introduce the notation
%\begin{equation}
 %   S_t^\star = \sum_{i=1}^t \lambda_i^2 \sigma_i^2
%\end{equation}
%since they recur here.

We can easily generalize \cref{thm:cids} as follow\footnote{We develop this section without any proof since all theorems here can be easily deduced by modifying previously proved counterparts, changing every application of $\Exp[(X_t -\mu)^2  \vert \mathcal{F}_{t-1}] \le \sigma^2$ into $\Exp[(X_t -\mu)^2  \vert \mathcal{F}_{t-1}] \le \sigma^2_t$.}.
\begin{theorem}[Dubins-Savage confidence sequence]\label{thm:cids-het} The following intervals $\{\CIDS_t\}$, under \cref{ass:mean} and \cref{ass:varhet}, form a $(1-\alpha)$-confidence sequence of $\mu$:
\begin{equation}
    \CIDShet_t = \left[ \frac{\sum \lambda_i X_i \pm \left(2/\alpha - 1 + \sum \lambda_i^2 \sigma_i^2 \right)  }{\sum \lambda_i } \right].
\end{equation}
\end{theorem}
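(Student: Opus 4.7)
The plan is to repeat the proof of \cref{thm:cids} verbatim, with the one substitution of $\sigma^2$ by the predictable bound $\sigma_i^2$ inside every sum. First, I would re-introduce the two martingales $M_t^+ = \sum_{i=1}^t \lambda_i(X_i - \mu)$ and $M_t^- = -M_t^+$ as in \eqref{eqn:ds-martingale}; these remain martingales under \cref{ass:mean} since each $\lambda_i$ is predictable and $\Exp[X_i - \mu \mid \mathcal{F}_{i-1}] = 0$, irrespective of which variance assumption is in force.

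Next, I would identify the predictable quadratic variation used by the Dubins-Savage inequality: $V_i := \Exp[(M_i^{\pm} - M_{i-1}^{\pm})^2 \mid \mathcal{F}_{i-1}] = \lambda_i^2 \Exp[(X_i - \mu)^2 \mid \mathcal{F}_{i-1}] \le \lambda_i^2 \sigma_i^2$, where the final inequality is \cref{ass:varhet}. The only conceptual novelty over the homoscedastic case is that $\sigma_i^2$ is itself $\mathcal{F}_{i-1}$-measurable by predictability, so $\lambda_i^2 \sigma_i^2$ is a sample-path-wise observable upper bound on $V_i$ and hence is available as the argument of an event inside the probability.

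Then I would apply \cref{lem:ds} to each of $\{M_t^\pm\}$ with $a = 2/\alpha - 1$ and $b = 1$ (recall from the proof of \cref{thm:cids} that the parameter $b$ is redundant: rescaling $b$ amounts to rescaling $\{\lambda_i\}$). Since $V_i \le \lambda_i^2 \sigma_i^2$ almost surely and $b \ge 0$, the event $\{\exists t : M_t^\pm \ge a + \sum_{i=1}^t \lambda_i^2 \sigma_i^2\}$ is contained in $\{\exists t : M_t^\pm \ge a + b \sum_{i=1}^t V_i\}$, so \cref{lem:ds} yields $\Pr[\exists t : M_t^\pm \ge (2/\alpha - 1) + \sum_{i=1}^t \lambda_i^2 \sigma_i^2] \le \alpha/2$. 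A union bound over the two directions then gives, with probability at least $1-\alpha$ and for every $t$, the two-sided bound $|\sum_{i=1}^t \lambda_i(X_i - \mu)| \le (2/\alpha - 1) + \sum_{i=1}^t \lambda_i^2 \sigma_i^2$, which upon dividing by $\sum_{i=1}^t \lambda_i$ is precisely the statement $\mu \in \CIDShet_t$.

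There is essentially no hard part: the argument is a turnkey transcription of the homoscedastic case, and the entire generalization is packaged into the footnote already present in the excerpt. The only point worth explicit attention is the monotone substitution of $\lambda_i^2 \sigma_i^2$ for $V_i$ inside the Dubins-Savage event, which is valid precisely because $\sigma_i^2$ is predictable (so $\sum_{i=1}^t \lambda_i^2 \sigma_i^2$ is a legitimate random boundary measurable at time $t$) and because $b \ge 0$ preserves the direction of the inequality.
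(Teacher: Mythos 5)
Your proof is correct and follows exactly the route the paper takes: the paper omits a formal proof for \cref{thm:cids-het}, explaining in a footnote that it is obtained from the proof of \cref{thm:cids} by replacing every use of $\Exp[(X_t-\mu)^2\mid\mathcal F_{t-1}]\le\sigma^2$ with $\Exp[(X_t-\mu)^2\mid\mathcal F_{t-1}]\le\sigma_t^2$, which is precisely the substitution you carry out. Your explicit observation that the predictability of $\{\sigma_t\}$ is what makes $\sum_{i\le t}\lambda_i^2\sigma_i^2$ a legitimate $\mathcal F_{t-1}$-(hence $\mathcal F_t$-)measurable boundary is a useful sharpening of a point the paper leaves implicit, but the argument is otherwise the same.
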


If we consider the simple case of deterministic polynomial growth $\sigma_t^2  = \Otheta(t^{\gamma})$, we see that matching the growth of $\{ \sigma_t \}$ with $\{ \lambda_t \}$ via setting $\lambda_t  = \Otheta(t^{-(\gamma + 1)/2})$ can result in the best possible asymptotic shrinkage rate
\begin{equation}
    |\CIDShet_t| = \Otheta(t^{(\gamma-1)/2}),
\end{equation}
which is indeed a ``shrinkage" if $\gamma < 1$, cf.\ \cref{cor:infvar-tight-2}.

Similarly, \cref{thm:cisn} can be reformulated in the following way.
\begin{theorem}[Self-normalized confidence sequence] 
We define the intervals $\aCISNphet$ to be
\begin{gather}
    \aCISNphet_t = \left[ \frac{ \left( U_t^{+} \right) \pm \sqrt{  \left( U_t^{+}  \right) ^2 -  \frac{2\sum \lambda_i^2}{3}  \left( \log(2/\alpha) - \sum \lambda_i X_i  + \frac{\sum \lambda_i^2 X_i^2 +  2\sum \lambda_i^2 \sigma_i^2  }{6} \right)  }    }{ \frac{\sum \lambda_i^2}{3} }  \right];
    \\
    \aCISNnhet_t = \left[ \frac{ \left( U_t^{-} \right) \pm \sqrt{  \left( U_t^{-}  \right) ^2 -  \frac{2\sum \lambda_i^2}{3}  \left( \log(2/\alpha) + \sum \lambda_i X_i  + \frac{ \sum \lambda_i^2 X_i^2 +  2\sum \lambda_i^2 \sigma_i^2}{6} \right)  }    }{ \frac{\sum \lambda_i^2}{3} }  \right].
\end{gather}
($U^\pm _t$ are defined back in \eqref{eqn:upm}.) Then, setting $\CISNhet_t = \mathbb R \setminus (\aCISNphet_t \cup \aCISNnhet_t)$, we have that $\{ \CISNhet_t  \}$ forms a $1-\alpha$ confidence sequence for $\mu$, under \cref{ass:mean} and \cref{ass:varhet}.
\end{theorem}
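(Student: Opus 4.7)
The plan is to mirror the three-step argument used to establish \cref{lem:sn-martingale}, \cref{lem:anti-cisn}, and \cref{thm:cisn}, with the scalar $\sigma^2$ replaced throughout by the predictable envelope $\sigma_i^2$. The key conceptual observation is that because $\{ \sigma_t \}$ is predictable in \cref{ass:varhet}, every $\sigma_i^2$ behaves exactly like a deterministic constant from the viewpoint of the conditional expectation $\Exp[\cdot \mid \mathcal{F}_{i-1}]$, so all the algebra from the homoscedastic case goes through verbatim.

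First, I would introduce the generalized self-normalized process
\[
\Msn_t{}^{\mathsf{het}} = \prod_{i=1}^t \exp\!\left(\lambda_i (X_i - \mu) - \frac{\lambda_i^2 \bigl((X_i - \mu)^2 + 2\sigma_i^2\bigr)}{6}\right)
\]
and verify it is a nonnegative supermartingale by replaying the proof of \cref{lem:sn-martingale}. The Delyon bound $\exp(x - x^2/6) \le 1 + x + x^2/3$ is applied as before; the new wrinkle is that the factor $\exp(-\lambda_i^2 \sigma_i^2/3)$ is now $\mathcal{F}_{i-1}$-measurable (here is where predictability of $\{\sigma_t\}$ is essential) and so can be factored out of the conditional expectation. \cref{ass:varhet} then yields
\[
\Exp\!\left[\,m_i \,\middle|\, \mathcal{F}_{i-1}\,\right] \le \left(1 + \frac{\lambda_i^2 \sigma_i^2}{3}\right) \exp\!\left(-\frac{\lambda_i^2 \sigma_i^2}{3}\right) \le 1,
\]
and \cref{lem:mult-nsm} finishes this step.

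Next, I would apply Ville's inequality to $\{\Msn_t{}^{\mathsf{het}}\}$ at level $\alpha/2$ and invert the resulting quadratic in $\mu$ exactly as in the proof of \cref{lem:anti-cisn}, with the sole bookkeeping change that $\sigma^2 \sum_{i=1}^t \lambda_i^2$ is uniformly replaced by $\sum_{i=1}^t \lambda_i^2 \sigma_i^2$. This produces precisely the endpoints appearing in $\aCISNphet_t$ and shows $\Pr[\forall t,\; \mu \notin \aCISNphet_t] \ge 1 - \alpha/2$. Running the identical argument with $\{-\lambda_i\}$ in place of $\{\lambda_i\}$ (which remains predictable) gives the companion bound for $\aCISNnhet_t$. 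A final union bound over the two anticonfidence events and passage to complements yields the claimed $(1-\alpha)$-CS.

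I do not anticipate a substantive obstacle: the argument is a line-by-line carryover of the homoscedastic proof. The single point demanding care is precisely the predictability hypothesis on $\{\sigma_t\}$ in \cref{ass:varhet}; absent it, the crucial factorization $\Exp[\exp(-\lambda_i^2 \sigma_i^2/3) \cdot (\cdots) \mid \mathcal{F}_{i-1}] = \exp(-\lambda_i^2 \sigma_i^2/3)\, \Exp[(\cdots) \mid \mathcal{F}_{i-1}]$ would fail and the supermartingale property of $\Msn_t{}^{\mathsf{het}}$ would collapse. Granted predictability, the pipeline goes through \emph{mutatis mutandis}, and the quadratic inversion reproduces the stated expressions for $\aCISNphet_t$ and $\aCISNnhet_t$ with no further modification.
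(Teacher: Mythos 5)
Your proposal is correct and matches the paper's approach: the paper explicitly states (in a footnote to the heteroscedastic appendix) that the theorems there follow by replaying the homoscedastic proofs with $\sigma^2$ replaced by $\sigma_t^2$, which is exactly the line-by-line carryover you describe. You also correctly flag the one place where care is needed --- predictability of $\{\sigma_t\}$ is what lets $\exp(-\lambda_i^2\sigma_i^2/3)$ factor out of $\Exp[\cdot\mid\mathcal F_{i-1}]$, without which the supermartingale property would fail.
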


%\section{ Catoni-style CS under Infinite Variance }\label{sec:inf-var}
%We finally present an extension to data with infinite variance, and the following approach is inspired by the work \cite{chen2021generalized} which generalizes Catoni's CI into the setting where only a lower moment exists while variance can be infinite. In what follows, $p$ is a constant such that $1< p \le 2$, and the case $p=2$ matches exactly our previous discussion under finite variance assumption.

%Instead of \cref{ass:var}, we now assume,
%\begin{assumption}\label{ass:alphath}
  %  The process is conditionally $L^p$ with an upper bound, known \emph{a priori}, on the conditional central $p$\textsuperscript{th} moment:
   % \begin{equation}
   %     \forall t\in\mathbb N^+, \quad \Exp[|X_t -\mu|^p  \mid \mathcal{F}_{t-1}] \le v_t
   % \end{equation}
  %  where $\{v_t\}$ is a predictable, nonnegative process.
%v \end{assumption}

%%

%\section{??Two-sample Test}
%Let $X_1\dots \sim P$ be i.i.d.\ with mean $\mu_P$ and variance $\sigma_P^2$ , $Y_1, \dots \sim Q$ be i.i.d.\ with mean $\mu_P$ and variance $\sigma_Q^2$. The 
%\begin{equation}
%M_t^{\pm} = \prod_{i=1}^t \exp \left( \pm \phi(\lambda_i (X_i - Y_i)) \mp \lambda_i (\mu_P - \mu_Q) - \frac{\lambda_i ^ 2}{2}(\sigma^2_P + \sigma^2_Q + (\mu_P - \mu_Q)^2))  \right). 
%\end{equation}

\end{document}